\documentclass[microtype]{gtpart}
\usepackage[utf8]{inputenc}
\usepackage{amsmath}
\usepackage{amssymb}
\usepackage[numbers]{natbib}
\usepackage{graphicx}
\usepackage{color}
\usepackage{amsthm}
\usepackage{subcaption}
\captionsetup{compatibility=false}
\usepackage{import}
\usepackage{array}

\theoremstyle{plain}

\makeatletter
\newtheorem*{rep@theorem}{\rep@title}
\newcommand{\newreptheorem}[2]{%
\newenvironment{rep#1}[1]{%
 \def\rep@title{#2 \ref{##1}}%
 \begin{rep@theorem}}%
 {\end{rep@theorem}}}
\makeatother

\newtheorem{thm}{Theorem}[section]
\newreptheorem{thm}{Theorem}
\newtheorem{lem}[thm]{Lemma}
\newtheorem{prop}[thm]{Proposition}
\newtheorem{cor}[thm]{Corollary}

\theoremstyle{definition}
\newtheorem{defn}[thm]{Definition}

\theoremstyle{remark}
\newtheorem*{rem}{Remark}

\title{The existence of a universal transverse knot}


%
\author{Jesús Rodríguez-Viorato}
\givenname{Jesus}
\surname{Rodríguez Viorato}
\address{Centro de Investigación en Matemáticas}
\email{jesusr@cimat.mx}
\urladdr{https://www.cimat.mx/~jesusr/}

\arxivreference{2001.03663}

\keyword{Contact 3-Manifolds}
\keyword{Branch coverings}
\keyword{Open book decomposition}
\subject{primary}{msc2010}{53D10}
\subject{secondary}{msc2010}{57M12}

\begin{document}
\begin{abstract}
    We prove that there is a knot $K$ transverse to $\xi_{std}$, the tight contact structure of $S^3$, such that every contact 3-manifold $(M, \xi)$ can be obtained as a contact covering branched along $K$. By contact covering we mean a map $\varphi: M \to S^3$ branched along $K$ such that $\xi$ is contact isotopic to the lifting of $\xi_{std}$ under $\varphi$.
\end{abstract}
\maketitle

\section{Introduction}

In 1982, W. Thurston \cite{Thurston} first introduced the universality of links. He showed that there is a link $K$ in $S^3$ such that any compact orientable 3-manifold can be obtained as a covering space of $S^3$ branched along $K$ . Such links are called \emph{universals}.

Later, J. Gonz\'alo \cite{Gonzalo-Branch-Covers} used branched covering in the context of contact 3-manifolds. He noticed that it is possible to ``lift" or  ``pull-back" the contact structure on the base space $S^3$  up to the domain $M$ through a covering $\phi: M  \to S^3$ branched along a transverse link $L \subset S^3$ (see Definition \ref{def:contact-branch-covering}). After that,  H. Geiges \cite{geiges-article} generalized the result to other dimensions, and \"{O}zt\"{u}rk and Niederkr\"{u}ger \cite{branch-covering-and-contact} showed that this "pull back" is unique up to contact isotopy. 

In 2013, M. Casey raised the question about the existence of transverse universal links \cite{Casey}. That is, if there is a link $L$ transverse to the standard contact structure $\xi_{std}$ of $S^3$ such that any contact 3-manifold is a contact covering of $S^3$ branched along $L$. She also showed that the figure-eight knot is not of this kind.

Recently, R. Casals and J. Etnyre \cite{TUL} proved that there is a transverse universal link. They also asked if it is possible to find one which is connected, \emph{a.k.a} a transverse universal knot. 

In the present work, we obtain an affirmative answer to that question, with the proof of the following theorem.

\begin{thm}\label{thm:main-corolary}
There is a universal transverse knot.
\end{thm}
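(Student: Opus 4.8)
The plan is to imitate, in the contact category, the classical passage from universal links to universal knots: first isolate a composition principle that reduces the theorem to the construction of one auxiliary contact branched covering, and then supply that covering.

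\textbf{Reduction.} It suffices to produce a transverse knot $K\subseteq(S^3,\xi_{std})$ together with a contact covering $\psi\colon(S^3,\xi_{std})\to(S^3,\xi_{std})$ branched along $K$ such that some transverse universal link $L$ (living in the middle copy of $S^3$) is transverse isotopic to a sublink of $\psi^{-1}(K)$. Granting this, $K$ is transverse universal: given a contact $3$--manifold $(M,\xi)$, take the contact covering $\varphi\colon(M,\xi)\to(S^3,\xi_{std})$ branched along $L$ provided by Casals--Etnyre \cite{TUL}; after replacing $\varphi$ by $g\circ\varphi$ for a suitable ambient contact isotopy $g$ of $(S^3,\xi_{std})$ --- harmless since $g^{*}\xi_{std}=\xi_{std}$ --- we may assume its branch locus is exactly the prescribed sublink of $\psi^{-1}(K)$. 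Then $\psi\circ\varphi$ is an honest covering off $K$ (off $\psi^{-1}(K)$ both maps are local homeomorphisms, and the branch locus of $\varphi$ lies inside $\psi^{-1}(K)$), hence a covering branched along $K$, and $(\psi\circ\varphi)^{*}\xi_{std}=\varphi^{*}(\psi^{*}\xi_{std})=\varphi^{*}\xi_{std}$ once $\psi$ is normalized so that its lifted contact structure is literally $\xi_{std}$; that normalization is achieved by post-composing $\psi$ with a contactomorphism, using the uniqueness of the tight contact structure on $S^3$ together with the tightness of the lift of the covering we shall build. Since $\varphi^{*}\xi_{std}$ is contact isotopic to $\xi$, the composite $\psi\circ\varphi$ exhibits $(M,\xi)$ as a contact covering of $(S^3,\xi_{std})$ branched along $K$.

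\textbf{Preparing the input.} Two soft reductions buy flexibility. First, any transverse link containing a transverse universal link is itself transverse universal (use the same covering, whose branch locus still lies in the larger link); so we may freely enlarge the Casals--Etnyre link --- by split transverse unknots, or by whatever spurious components $\psi^{-1}(K)$ turns out to carry. Second, by the transverse Markov theorem we may, after positive stabilizations, present the transverse universal link as a closed braid $\widehat\beta$ about a transverse unknotted braid axis, compatibly with the standard open book of $(S^3,\xi_{std})$. So it is enough to realize such a closed braid (together, if convenient, with its axis and further split pieces) inside $\psi^{-1}(K)$.

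\textbf{The covering, and the main obstacle.} What remains --- and where the real work lies --- is to build the transverse knot $K$ and the contact covering $\psi$ over $K$ with a transverse copy of the universal braid sitting inside $\psi^{-1}(K)$. Topologically this is precisely the type of statement underlying the classical theorems that certain knots (figure-eight, and others) are universal; as those results already show, a cyclic branched cover will not suffice --- branching over a single circle always leaves that circle, disjoint from everything off the axis, as an extra component downstairs --- so one must use an \emph{irregular} branched cover with transitive non-abelian monodromy along $K$, arranged so that several components of $\psi^{-1}(K)$ are permuted into the single knot $K$ and the universal braid embeds among them. The genuinely new difficulties are contact-geometric: keeping $K$ and the branch locus transverse throughout; producing a local contact model for an irregular branched covering near a transverse knot and patching the (cyclic) local models at the branch points compatibly with the global monodromy; guaranteeing that the lifted contact structure on $S^3$ is tight, so that the normalization of the reduction step applies; and transporting $\widehat\beta$ into $\psi^{-1}(K)$ by a \emph{transverse}, not merely smooth, isotopy. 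Should a ready-made irregular cover resist being made contact, an alternative is to assemble $\psi$ as a composition of simpler contact branched coverings --- several cyclic ones --- absorbing the spurious branch circles into $K$ at each stage via the enlargement freedom above. Once the construction is in place, the composition principle of the first step produces a transverse universal knot, which is Theorem~\ref{thm:main-corolary}.
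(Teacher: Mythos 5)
Your reduction step is essentially the one the paper uses: build a single contact covering $\psi\colon(S^3,\xi_{std})\to(S^3,\xi_{std})$ branched along a transverse knot $K$ whose preimage $\psi^{-1}(K)$ contains (a transverse isotopic copy of) the Casals--Etnyre universal link $L$, and then post-compose with the coverings $\varphi\colon(M,\xi)\to(S^3,\xi_{std})$ branched along $L$. The soft observations you list --- enlarging the link, passing to a braid via the transverse Markov theorem --- are also consistent with the paper's framework, which works throughout with closed braids transverse to the pages of the standard open book. Two of the worries you flag in the reduction are, however, non-issues in the framework the paper uses: the lifted contact structure need only be contact isotopic to $\xi_{std}$ (that is all ``contact covering'' asks, so no literal ``normalization'' is needed), and tightness of the lift is automatic once $\psi$ is realized as an open book branch covering $(D^2,\tilde f)\to(D^2,f)$ over a disk, since any open book with disk pages supports $\xi_{std}$ (this is Casey's theorem as invoked in the paper).

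The genuine gap is that you stop precisely where the paper begins. Your third paragraph says ``what remains --- and where the real work lies --- is to build the transverse knot $K$ and the contact covering $\psi$,'' correctly identifies that a cyclic cover cannot work and an irregular cover with transitive monodromy is needed, lists the difficulties (local contact models, tightness, transverse isotopy), and then defers the construction to a hoped-for composition of cyclic covers, without producing any of it. That is not a proof of the existence of $\psi$; it is a statement that $\psi$ ought to exist. The paper's contribution (its Theorem~\ref{thm:main-theorem} and Sections~\ref{sec:description-with-graphs}--\ref{sec:constructing-branch-coverings}) is exactly this missing content: one reduces to open book branch coverings of the disk, encodes such coverings by edge-colored trees, constructs a specific $2m$-colored tree $G_m$ (Definition~\ref{def:graph_gm}) with the $2m$-consecutive-coloring property so that cubes of half-arc-twists along all the standard arcs $\alpha_i$ lift (Proposition~\ref{prop:properties-of-G_m} and Corollary on lifting), introduces the auxiliary arcs $\gamma_i$ so that squares of half-arc-twists also lift (Proposition~\ref{prop:gamma_i-properties} and Lemma~\ref{lem:basic-covering-maps}), uses these lifts to realize on a designated subset $Q\subset\phi^{-1}(P)$ any prescribed braid word (the Artin generators $A_{i,j}$), and then uses one extra twist $\alpha_m^3$ to merge components of the downstairs branch braid, iterating until the branch locus is a knot. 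None of that is sketched, or even anticipated in a form that could be filled in mechanically, in your proposal. In particular, the suggestion to ``assemble $\psi$ as a composition of simpler contact branched coverings --- several cyclic ones'' would not work as stated: a cyclic (abelian, regular) cover branched along a knot keeps every preimage component of the branch knot mapping to the whole knot with some cyclic local model, and the combinatorial freedom needed to manufacture an arbitrary braid upstairs while keeping the downstairs branch locus connected requires exactly the irregular, tree-encoded coverings the paper constructs. So the proposal, as written, proves only the (true and comparatively easy) implication ``Theorem~\ref{thm:main-theorem} $\Rightarrow$ Theorem~\ref{thm:main-corolary}'' and leaves the substance of the paper unestablished.
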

\begin{proof}
 The proof is just a corollary of Theorem \ref{thm:main-theorem}. From it, we know that there exists a contact branch covering $\varphi: (S^3, \xi_{std}) \to (S^3, \xi_{std})$ branched along some transverse knot $K$  such that $\varphi^{-1}(K)$ contains a sublink that is contact isotopic to the universal transverse link $L$ given in \cite{TUL}.
 
Because $L$ is transverse universal, we know that for any contact manifold $(M, \xi)$ there exists a contact covering $\varphi': (M, \xi) \to (S^3, \xi_{std})$ branched along $L$. Then, $ \varphi \circ \varphi': (M, \xi) \to (S^3, \xi_{std}) $ is a contact covering branched along $K$. Hence,  $K$ is transverse universal.
\end{proof}


In fact, Theorem \ref{thm:main-theorem} says even more, it says that we can construct such a branch covering for any transverse link $L$ in $(S^3, \xi_{std})$, being $L$ universal or not. This theorem can be considered as the contact version of a previous theorem from  Hilden, Lozano, and Montesinos in \cite{2-universal}. 

\begin{thm}\label{thm:main-theorem}
Given any transverse link $L$ in $(S^3, \xi_{std})$, there is a contact covering $\varphi:(S^3, \xi_{std}) \to  (S^3, \xi_{std})$ branched along some transverse link $L'$ such that $L$ is contained in $\varphi^{-1}(L')$. If $L$ is disconnected, $L'$ can be chosen connected or with fewer components than $L$.
\end{thm}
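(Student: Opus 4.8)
The plan is to reduce the number of components of $L$ by one via a single contact branched covering $\varphi:(S^3,\xi_{std})\to(S^3,\xi_{std})$, and then iterate. Fix two components $L_1,L_2$ of $L$ (if $L$ is already connected there is nothing to prove). First I would choose a small Darboux ball $B$ disjoint from $L$ and, inside it, a transverse unknot $U$ that will serve as the branch locus $L'=U\cup(\text{pushoff of the rest})$; the key is to build $\varphi$ so that it is a genuine covering away from $U$ and a local model of the form $(z,w)\mapsto(z^2,w)$ near $U$, as in the definition of contact branched covering of \"{O}zt\"{u}rk--Niederkr\"{u}ger cited in the excerpt. Using the standard neighborhood theorem for transverse knots, a neighborhood of $U$ is contactomorphic to a standard solid torus with its model contact form, so the local branched double cover there is automatically contact; the real work is globalizing this.

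The main construction: take a $2$-fold branched cover $\psi:S^3\to S^3$ branched along an unknot $U'$ so that the two sheets over a chosen arc get swapped, and arrange the monodromy so that $L_1$ and $L_2$ each lift, but a band connecting their preimages closes up — concretely, I would present the branched cover by a Wirtinger-type monodromy representation $\pi_1(S^3\setminus U')\to S_2$ and position $L$ so that $\psi^{-1}(L')$ has one fewer component than $L'$ while still containing a copy of $L$. The subtlety is that this must be done \emph{transversely}: I would use that any transverse link can be put in a normal form (e.g. as a transverse braid closure about the standard contact structure, by Bennequin), isotope $L_1,L_2$ to be braids in disjoint solid tori, and then let $U'$ be a transverse unknot (an axis-type curve) encircling a single strand shared between the two, so that the double cover branched along it merges the two closures. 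The contact condition on $\varphi$ is then checked by the local model near the branch set plus the fact that away from it $\varphi$ is an honest covering of contact manifolds.

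The step I expect to be the main obstacle is ensuring simultaneously that (i) the covering is \emph{contact}, i.e. the lift of $\xi_{std}$ is contact isotopic to $\xi_{std}$ on the total space (which here is again $S^3$, so this is a statement about the lifted plane field being tight/standard), and (ii) the branch set $L'$ downstairs is transverse while the preimage contains $L$ transversely up to contact isotopy — the transverse isotopy has to be compatible with the covering map, so I cannot freely isotope upstairs and downstairs independently. I would handle (i) by keeping the branched cover supported in a controlled region and invoking Eliashberg's uniqueness of the tight contact structure on $S^3$ (the lifted structure is tight because it is locally the pullback of a tight structure and $S^3$ admits a unique tight contact structure), and (ii) by doing the whole construction inside the transverse-braid picture where the covering map is explicit and the contact form pulls back in a controlled way. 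Once one component is removed, iterate finitely many times; when only a single target component is wanted, finish by one more application, giving $L'$ connected.
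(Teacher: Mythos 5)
Your proposal does not address the central difficulty and, as written, cannot work even topologically. A $2$-fold branched cover $\psi:S^3\to S^3$ is the cyclic double cover of an unknot: by the Smith conjecture the fixed-point set of the covering involution, which is $\psi^{-1}(L')$, must be a single unknotted circle, and then $L'$ is itself an unknot. So $\psi^{-1}(L')$ is one unknotted component and cannot contain an arbitrary transverse link $L$; your plan to ``position $L$ so that $\psi^{-1}(L')$ contains a copy of $L$'' has no room to maneuver, since once $\psi$ and $L'$ are chosen, $\psi^{-1}(L')$ is a fixed unknot. (You also appear to conflate upstairs and downstairs: $L$ lives in the source and $L'$ in the target, so talking about components $L_1,L_2$ ``lifting'' under $\psi$ has the direction reversed, and your count ``$\psi^{-1}(L')$ has one fewer component than $L'$'' is incompatible with $L\subset\psi^{-1}(L')$ and $\#L'<\#L$.)

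You also locate the main obstacle in the wrong place. Verifying that the lifted plane field is standard is, in the paper's framework, essentially automatic: one works entirely through compatible open books, representing the branched covering by a disk covering $\phi:D^2\to D^2$ commuting with disk automorphisms, and Casey's theorem says the induced open book on the cover supports the lifted contact structure; when the covering page is again a disk, Giroux's correspondence forces the total space to be $(S^3,\xi_{std})$. The genuine difficulty is combinatorial: constructing a single branched covering whose ramification locus, restricted to a controlled subset $Q$ of the preimage of the branch points, realizes the braid word of an arbitrary $L$. The paper does this by encoding disk coverings as edge-colored trees, producing a specific $2m$-colored ``consecutive-colored'' tree $G_m$ so that cubes $\tau^3_{\alpha_i}$ of half-arc-twists lift, then introducing auxiliary arcs $\gamma_i$ whose squared twists also lift (Proposition \ref{prop:gamma_i-properties}) precisely to cancel the unwanted extra twisting on $Q$, and finally showing these lifts generate enough of the braid group on $Q$ to write down $L$ and simultaneously make the downstairs braid have one fewer component. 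None of this machinery, nor a substitute for it, appears in your sketch; a single application of Eliashberg uniqueness and local transverse models does not produce the needed control of $\varphi^{-1}(L')$.
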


We will give a constructive proof of this theorem.  To get the universal link $K$, one needs to follow our construction replacing $L$ with the transverse universal link provided by Casals and Etnyre. The algorithm is long, and it will create a knot with a lot of crossings.  If we want to obtain a knot with fewer crossings, a possible way is to simplify the algorithm presented here for the specific case of the link $L$ given in \cite{TUL}. 

The paper is organized as follows. In Section \ref{sec:contact-and-openbooks}, we review some known facts about contact structures and open books. At the end of this section, we prove that the following theorem implies Theorem \ref{thm:main-theorem}.

\begin{repthm}{thm:main-theorem-translation-to-openbooks}
Let $L$ be a braid link in $S^3$. Then there is braid knot $K$ in $S^3$ and a covering $\varphi:S^3 \to S^3$ map branched along $K$ such that $\varphi$ maps disk pages into disk pages (of the corresponding standard open books of $S^3$ ) such that a positive stabilization of $L$ is a conjugate of some sublink of $\varphi^{-1}(K)$.   
\end{repthm}

We can think of this theorem as a translation of Theorem \ref{thm:main-theorem} from contact structures, and transverse knots into open-books, and braids. So, after Section \ref{sec:contact-and-openbooks} we expend most of our time proving Theorem \ref{thm:main-theorem-translation-to-openbooks}. 

First, in Section \ref{sec:description-with-graphs} we set up some notation to describe branch coverings between surfaces using graphs. These coverings will be the main tool to construct open-book branch coverings $\varphi: S^3 \to S^3$. We also describe how it is possible to modify these coverings through half-twists (see Subsection \ref{sec:half-twist-lifting}). By the end of this section, we construct a special branch covering $\varphi_m: S^3 \to S^3$ for every $ m\geq 8$ that admits a bunch of such special modifications.

In Section \ref{sec:constructing-branch-coverings} we prove Theorem \ref{thm:main-theorem-translation-to-openbooks}. In the first subsection (\ref{sec:arcs-gammai-definition}),  we describe another set of moves specially crafted to modify the branch covering map $\varphi_m$ in a finer way. The definition is again through a set of half-twist. In Subsection \ref{sec:constructing-any-braid} we show that these moves are enough to construct any given closed braid $L$ as a $2m$-strand sub-braid of $\varphi_m^{-1}(K)$ for some $m$. 

Finally, in Subsection \ref{sec:proof-of-main-theorem-translation-to-openbooks} we write the detailed proof of Theorem \ref{thm:main-theorem-translation-to-openbooks} in a constructive way. We split the proof into five steps. The first four steps create a closed braid link $K$, and the given link $L$ as a sub-braid of $\varphi_m^{-1}(K)$. The link $L$ is carefully setup inside of $\varphi_m^{-1}(K)$ for the fifth step to work. The final fifth step is just a simple cubic power of a half-twist that reduces the number of components of $K$ by one, and changes $L \subset \varphi_m^{-1}(K)$ only by a set of positive stabilizations.  This final step is what finally proves Theorem \ref{thm:main-theorem-translation-to-openbooks}.




\textbf{Acknowledgment}. Part of this work was carried out while the author was visiting the Mathematics Department at the University of Iowa and he wishes to thank them for their hospitality. Especially to Maggy Tomova for insightful conversations and guidance. He also likes to thank Roman Aranda, Thomas Kindred, Puttipong Pongtanapaisan, and Daniel Rodman for helpful discussions at the Topology Secret Seminar organized by Maggy. The author is greatly indebted to Puttinpong for letting him know about this problem. 

\section{From contact structures to open books}\label{sec:contact-and-openbooks}
An $n$-fold branch covering between a 3-manifold $N$ by another manifold $M$ is a continuous map $\varphi: M \to N$ such that there are links $L' \subset M$ and $L \subset N$ where $L' = \varphi^{-1}(L)$ and  $\varphi: M - L' \to N - L$ is an ordinary covering map. We will also include in our definition that, around each component of $L'$, there is a neighborhood $U = S^1 \times D^2$ such that $\varphi(U) = S^1 \times D^2$ and in these coordinates $\varphi$ is of the form $\varphi(\theta, z) =(\theta^k, z^m) $ for some positive integers $k$ and $m$. We will say that $\varphi$ is an \textit{$n$-sheet branch covering map} if $\varphi: M - L' \to N - L$ is an $n$-sheet covering map.

If $N$ has a contact structure $\xi$, it will be possible to give $M$ a contact structure ``induced" by $\varphi$ thanks to the following theorem.

\begin{thm}[J. Gonzalo \cite{Gonzalo-Branch-Covers},  H. Geiges \cite{geiges-article}, G\"{O}zt\"{u}rk and Niederkr\"{u}ger \cite{branch-covering-and-contact}]\label{thm:branch-covering-and-contact}
Let $\varphi:M \to N$ be a branched covering map with branch locus $L$. Given a contact structure $\xi$ on $N$ with contact form $\alpha$ such that $L$ is transverse to $\xi$, then $\varphi^{*}(\alpha)$ may be deformed by an arbitrarily small amount near $\varphi^{-1}(L)$ to give a contact form defining a unique, up to contact isotopy, contact structure $\xi_L$ on $M$.
\end{thm}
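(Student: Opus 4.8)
\textbf{Proof proposal for Theorem \ref{thm:branch-covering-and-contact}.}

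The plan is to construct the form $\xi_L$ locally near $\varphi^{-1}(L)$ and patch it with $\varphi^*\alpha$ away from the branch locus, then argue uniqueness via Gray stability. Away from $\varphi^{-1}(L)$, the map $\varphi$ is a genuine covering, so $\varphi^*\alpha$ is already a contact form: $d(\varphi^*\alpha)\wedge\varphi^*\alpha = \varphi^*(d\alpha\wedge\alpha)$ is nowhere zero because $\varphi$ is a local diffeomorphism there. The only trouble is along $\varphi^{-1}(L)$, where $\varphi$ has the normal form $(\theta,z)\mapsto(\theta^k, z^m)$ on a neighborhood $U = S^1\times D^2$, so I would first analyze the pullback of $\alpha$ in these coordinates. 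Since $L$ is transverse to $\xi$ and $\alpha$ orients $L$ positively, one can choose a Martinet-type normal form for $\alpha$ near a component of $L$, say $\alpha = d\theta + r^2 d\phi$ (in polar coordinates $z = re^{i\phi}$ on the disk factor, up to scaling), after a contact isotopy supported near $L$. The key local computation is then to pull this back under $z\mapsto z^m$: writing $w = z^m$ so $|w| = r^m$ and $\arg w = m\phi$, one gets $\varphi^*\alpha = k\, d\theta + r^{2m} d\phi \cdot m$ (up to the $\theta^k$ contribution), which degenerates along $r=0$ precisely because the coefficient $r^{2m-2}$ of the radial two-form in $d(\varphi^*\alpha)\wedge\varphi^*\alpha$ vanishes there when $m>1$.

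The heart of the argument is to fix this degeneracy by an arbitrarily small deformation. I would interpolate between $\varphi^*\alpha$ and a model tight contact form on the solid torus $U$ that agrees with $\varphi^*\alpha$ outside an $\varepsilon$-neighborhood of the core. Concretely, replace the radial coefficient $r^{2m}$ by a function $f(r)$ with $f(r) = r^2$ (suitably normalized) for small $r$ and $f(r) = r^{2m}$ (matched to first order) for $r$ near the boundary of the $\varepsilon$-tube, with $f' > 0$ throughout so that $k\,d\theta + f(r)d\phi$ stays contact: the contact condition reduces to $f'(r) \neq 0$ and $f(r)/r \to 0$ appropriately at the core, which one can arrange. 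Because $f$ can be taken $C^0$-close (indeed $C^1$-close away from $0$) to $r^{2m}$ on the $\varepsilon$-tube, the resulting global one-form $\alpha_\varepsilon$ is a contact form that is an arbitrarily small perturbation of $\varphi^*\alpha$ and is genuinely contact everywhere, including along $\varphi^{-1}(L)$. The branch covering $\varphi$ is then a contact branched covering from $(M,\ker\alpha_\varepsilon)$ to $(N,\xi)$ in the sense required. One should check that the plane field $\ker\alpha_\varepsilon$ extends $\ker(\varphi^*\alpha)$ smoothly across the core — this is where the matching conditions on $f$ at both ends of the interpolation interval matter.

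For uniqueness up to contact isotopy, I would invoke Gray's stability theorem: if $\alpha_\varepsilon^{0}$ and $\alpha_\varepsilon^{1}$ are two such deformations, both agreeing with $\varphi^*\alpha$ outside a neighborhood of $\varphi^{-1}(L)$ and both contact, then the linear path $\alpha_t = (1-t)\alpha_\varepsilon^0 + t\alpha_\varepsilon^1$ — after possibly shrinking the perturbation so that contactness is preserved along the whole path, which holds because the contact condition is open and the $\alpha_\varepsilon^i$ are close to the fixed $\varphi^*\alpha$ away from the core while on the core they are controlled by the same kind of radial model — gives a family of contact structures on the closed manifold $M$ (or on each solid torus neighborhood rel boundary), and Gray stability produces an isotopy carrying $\ker\alpha_0$ to $\ker\alpha_1$. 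Since the deformation is supported near $\varphi^{-1}(L)$, this isotopy can be taken to fix $M$ outside a neighborhood of the branch locus, so the induced contact structure $\xi_L$ is well-defined up to contact isotopy.

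The main obstacle is the local model analysis: one must verify that after the transverse normal form for $\alpha$ near $L$, the pullback under $z\mapsto z^m$ admits a radial interpolation that stays contact \emph{and} whose kernel extends smoothly across the branch core, and that the space of such interpolations is connected enough (via the openness of the contact condition plus a Moser/Gray argument) to get uniqueness. Everything else — contactness away from the branch locus, compatibility of the charts around distinct components of $L$, and the final Gray stability step — is routine once this local picture is pinned down.
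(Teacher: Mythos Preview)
The paper does not prove this theorem: it is stated with attribution to \"{O}zt\"{u}rk and Niederkr\"{u}ger and used as a black box, with no argument given in the text. There is therefore no in-paper proof to compare your proposal against.

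That said, your sketch is essentially the standard argument one finds in the cited source (and in Geiges's treatment of the same construction): pull back $\alpha$, observe it is contact off the branch locus, put $\alpha$ into the transverse normal form $d\theta + r^2\,d\phi$ near $L$, compute that the pullback under $z\mapsto z^m$ degenerates along the core, repair this by a radial interpolation $f(r)$ with $f'>0$, and conclude uniqueness via Gray stability. The outline is sound; the only place to be careful is the uniqueness step, where the straight-line homotopy between two interpolants need not stay contact automatically, so one usually argues instead that any two such interpolants can be connected through a one-parameter family of admissible $f$'s (the space of monotone interpolants is convex, hence contractible), which is a cleaner way to feed into Gray's theorem than appealing to $C^0$-closeness.
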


\begin{defn}\label{def:contact-branch-covering}
A branch covering map $\varphi:(M,\xi_L) \to (N,\xi)$  with the contact structure $\xi_L$ defined as above is called a \emph{contact branch covering} of $(N,\xi)$ along $L$.
\end{defn}

\subsection{Open Books}
An open book decomposition for a 3-manifold $M$ is given by a link $L$ embedded in $M$ such that $M - L $ fibers over $S^1$ with the condition that $\pi^{-1}(t)$ is a Seifert surface of $L$ for every $t \in S^1$; where $\pi: M-L \to S^1$ is the fibration map. We often say that $(L,\pi)$ is an open book decomposition of $M$.  The fibers $F_t = \pi^{-1}(t)$ are called \emph{pages}, and the link $L$ is called the \emph{binding} of the open book decomposition. The monodromy of the fibration $\pi$ is called the \emph{monodromy} of the open book decomposition. 

Given a surface $E$ and automorphism $f$ fixing $\partial E$, we can construct a 3-manifold $M_{(E,f)}$ by taking the mapping torus of $f$ and gluing some solid tori at the boundary (see \cite{Ozabci} for a more detailed description). The manifold $M_{(E,f)}$ has a natural open book decomposition given by $(E,f)$. The union of the cores of the glued tori is the binding, and the pages are copies of $E$. The function $f$ is the corresponding monodromy. We will refer to the pair $(E,f)$ as an \emph{abstract open book}.

\subsection{Compatible open books} \label{sec:compatible-open-books}

It has been shown by Giroux \cite{giroux2003g} that there is a correspondence between open books and contact-structures. The corresponding open book for a contact structure is often called a \emph{compatible} or \emph{supported} open book. 

We will refer to $(S^3, \xi_{std})$ as the standard contact 3-sphere. It is well known that a compatible open book decomposition for the standard contact 3-sphere is given by disk a fibration of a trivial knot transverse to $\xi_{std}$ with self linking $-1$. In this case, the pages are disks, and the monodromy is the identity. 

So, by taking any abstract open book $(E^2,f)$, we can construct a contact structure $\xi_{(E^2,f)}$ for the 3-manifold $M_{(E^2,f)}$ compatible with the natural open book decomposition given by  $(E^2,f)$. By Giroux's correspondence theorem \cite{giroux2003g}, all the possible contact structures $\xi_{(E^2,f)}$ are contact isotopic. In particular, if we take any automorphisms $f \in Aut(D^2, \partial D^2)$, the contact structure $\xi_{(D^2,f)}$ is contact isotopic to the standard contact structure $\xi_{std}$ of  $M_{(D^2,f)} \cong S^3$.

\subsection{Open books branch coverings}\label{sec:open-books-branch-coverings}

For this part, we are going to review some known facts about branch coverings and open books (for a detailed exposition see Casey's Thesis \cite[Section 3.4]{Casey}).  We also take the opportunity to introduce some handy notation, inspired by the non-branching case introduced in \cite{keiko}.  

Now, let us take a covering $\phi: \tilde{F}^2 \to F^2$ branched along some finite set of points $P = \{p_1, \dots, p_m\} \subset F^2$.   

\begin{defn}\label{def:open-book-branch-covering-map}
If there is an automorphism $\tilde{f} \in Aut(\tilde{F},\partial \tilde{F})$ satisfying that $$\phi \circ \tilde{f} = f \circ \phi$$ we will say that  $(\tilde{F}, \tilde{f})$ branch covers $(F,f)$. We will also say that $\phi: (\tilde{F}^2, \tilde{f}) \to (F,f)$ is an open book branch covering map.  
\end{defn}

On the above definition, the branch covering map is defining an actual branch covering map between 3-manifolds, namely $\varphi: M_{(\tilde{F}, \tilde{f})} \to M_{(F,f)}$ satisfying that its restriction to the pages is equivalent to $\phi$. The branching set $B$ of $\varphi$ is the projection of $P\times [0,1] \subset F \times [0,1]$ into $M_{(F,f)}$. Observe that $B$ is a 1-dimensional submanifold of $M_{(F,f)}$, which is transverse to the pages. In other words, $B$ is a link transverse to the pages of $M_{(F,f)}$.

In particular, when $F$ is a disk, $F = D^2$. Any open book branch covering over $(D^2, f)$ branches along a closed braid in $S^3 \cong M_{(D,f)}$.  

On the other way around, if we have any covering map $\varphi: M^3 \to S^3$ branched along some closed braid  $L$ in $S^3$, then we can give $M^3$ an "induced" open book decomposition. As a closed braid, $L$ is transverse to the pages of an open book decomposition $(D^2, Id)$ of $S^3$ (where $Id$ is the identity map). The way we can create an open book decomposition for $M$ is by using $\varphi^{-1}( \partial D^2)$ as a binding, and $F_t = \varphi^{-1}(D^2 \times t)$ as the pages. Which then makes $\varphi$ an open book covering map.

\subsection{Braids and transverse links}
There is a correspondence between braids in $S^3$ and transverse links in the standard contact 3-sphere (see Theorem \ref{thm:markov-contact} below). To construct the correspondence, we need to recall that $\xi_{std}$ restricted to $S^3 - \{point\}$ is contactomorphic to $(\mathbb{R}^3,\xi_{sym})$ where $\xi_{sym} = ker(xdy-ydx +dz) $. This means that any link transverse to $\xi_{std}$ can be identified with a transverse link in $(\mathbb{R}^3,\xi_{sym})$ and vice-versa. 

Now, it is easy to show that any braid can be drawn transversely to $\xi_{sym}$ by pushing it away from the $z$-axis. This way, any braid represents a transverse link. And the following theorem from Bennequin says that any transverse knot comes from a braid.

\begin{thm}[Bennequin, \cite{bennequin}]\label{thm:transverse-links-as-braids}
Any transverse link in $(\mathbb{R}^3,\xi_{sym})$ is transversely isotopic to a closed braid.
\end{thm}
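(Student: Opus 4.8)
The plan is to establish the transverse analogue of Alexander's braiding theorem by induction on the number of arcs along which the link winds the ``wrong way'' about the braid axis, repairing each such arc by a controlled push across the axis that respects the contact condition. Work in $(\mathbb{R}^3,\xi_{sym})$ in cylindrical coordinates $(r,\theta,z)$, so that $\xi_{sym}=\ker\alpha$ with $\alpha=r^2\,d\theta+dz$ and the braid axis is the $z$-axis (the binding of the open book $(D^2,\mathrm{id})$ of $S^3$). A link is positively transverse to $\xi_{sym}$ precisely when $\alpha>0$ along it, and it is a closed braid about the $z$-axis precisely when $d\theta>0$ along it. Since positive transversality is an open condition, a $C^\infty$-small perturbation lets us assume the given transverse link $T$ is disjoint from the $z$-axis and that $\theta|_T\colon T\to S^1$ is Morse; then $T$ breaks into finitely many maximal \emph{good} arcs ($\dot\theta>0$) and \emph{bad} arcs ($\dot\theta<0$), separated by the finitely many points with $\dot\theta=0$. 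If no bad arc occurs, a final small perturbation makes $\dot\theta>0$ everywhere and $T$ is already a closed braid.

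Otherwise, pick a bad arc $\delta$, extended slightly into the neighbouring good arcs so that $\dot\theta>0$ near its endpoints $p,q$. The key point is that on a bad arc $\dot\theta<0$ together with $\alpha>0$ force $\dot z>-r^2\dot\theta>0$, so $\delta$ is strictly increasing in $z$, say from $z_p$ at $p$ to $z_q$ at $q$; this is the room we exploit. Replace $\delta$ by an arc $\delta'$ from $p$ to $q$ that (i) is strictly increasing in $\theta$ throughout, winding forward about the $z$-axis, (ii) increases in $z$ from $z_p$ to $z_q$ with $|\dot z|$ bounded in terms of $T$ only, and (iii) runs at a fixed radius $R$ exceeding the radial extent of $T$, except for two short arcs near $p$ and $q$ dipping radially inward along directions chosen by general position to miss $T$. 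Because $\dot\theta>0$ on all of $\delta'$, taking $R$ large makes $\alpha=r^2\dot\theta+dz>0$ on the far part, while the short connecting arcs can be taken with $\dot\theta>0$ and $\dot z\ge 0$; hence $\delta'$ is positively transverse, embedded, disjoint from $T\setminus\delta$, and entirely good. So $T':=(T\setminus\delta)\cup\delta'$, smoothed at $p$ and $q$ (which keeps it transverse, transversality being open), is a transverse link with strictly fewer bad arcs.

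It remains to see that $T'$ is transversely isotopic to $T$, and this is the crux. Since $\delta$ and $\delta'$ share only the endpoints $p,q$ and the loop $\delta\cup\overline{\delta'}$ has nonzero winding number about the $z$-axis, the replacement is not a finger move in $\mathbb{R}^3\setminus(z\text{-axis})$ but a genuine push across the binding. I realize it as a transverse ambient isotopy in three stages: first shrink the interior of $\delta$ toward the $z$-axis, which can be done through transverse arcs since decreasing $r$ where $\dot\theta<0$ only increases $\alpha$, near $r=0$ one has $\alpha\approx dz$ with $\dot z>0$ on $\delta$, and the $z$-axis is disjoint from the compact set $T\setminus\delta$; then push the resulting nearly vertical middle segment across the axis --- at the instant it meets $r=0$ transversality is automatic because $\dot z>0$ there --- which changes the angular winding by $2\pi$; finally push the arc out to radius $R$ and wind it forward, staying transverse because $\dot\theta>0$. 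Every intermediate stage is an embedded transverse link disjoint from $T\setminus\delta$, so this is a transverse isotopy from $T$ to $T'$. Iterating the repair finitely many times removes all bad arcs, and a last $C^\infty$-small perturbation makes $\dot\theta>0$ everywhere, exhibiting $T$ as transversely isotopic to a closed braid. The main obstacle is exactly the middle step above --- checking carefully that the push across the binding can be carried out through transverse links; everything else parallels the classical Alexander argument and is routine bookkeeping.
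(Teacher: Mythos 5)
The paper does not prove this statement; it simply cites Bennequin and uses the result as a black box, so there is no in-paper argument to compare against. Your plan is the standard transverse adaptation of Alexander's braiding theorem, and the central observation is correct and well stated: on a bad arc, $\dot\theta<0$ together with $\alpha = r^2\,d\theta+dz>0$ forces $\dot z>0$, which is exactly the room needed to push across the braid axis while staying positively transverse. The inductive bookkeeping (replace a bad arc by a good one, the count of bad arcs strictly drops, smoothing preserves transversality by openness) is fine, as is the observation that $\alpha$ only increases when $r$ is decreased at points where $\dot\theta<0$.

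The gap is in the isotopy itself, which you partially flag. Two concrete points. First, a bad arc may wind backward by more than $2\pi$; a single crossing of the axis changes the winding by only $2\pi$, so you must either iterate the crossing or first subdivide $\delta$ into bad sub-arcs each winding back by less than $2\pi$. Second, and more seriously, embeddedness during the contraction stage is asserted but not checked: the radial contraction of the interior of $\delta$ sweeps out a two-dimensional region, which generically meets the one-dimensional set $T\setminus\delta$ in isolated points, so intermediate stages of the isotopy need not be embedded links. Disjointness of the $z$-axis from $T\setminus\delta$ controls only the end position, not the path to it. The usual Alexander-style remedy is to subdivide $\delta$ until each sub-arc can be repaired through a region clear of $T\setminus\delta$; carrying this out while keeping every intermediate arc positively transverse is the genuine technical content of the theorem, and the proposal leaves it as a sketch rather than a proof.
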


It is natural to ask whether two braids represent the same transverse link. The answer is given in similar terms as the famous Markov's Theorem for braids \cite{markov-original} and links.  Before the statement of the theorem, we need to recall the following definition.

\begin{defn} 
Given a braid $B$ in the braid group $B_n$, we can get another braid $B' \in B_{n+1}$ by adding a crossing between the extra strand and the last strand of $B$. If the added crossing is positive (or negative) we say that $B'$ is a \emph{positive (or negative) stabilization} of $B$, or that $B$ is a \emph{positive (or negative) destabilization} of $B$.
\end{defn}

\begin{thm}[Orevkov and Shevchishin, \cite{markov-contact}] \label{thm:markov-contact} Two braids represent the same transverse link if and only if they are related by a sequence of positive stabilization/destabilization and braid isotopy.
\end{thm}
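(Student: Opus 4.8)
The plan is to split the biconditional into a routine implication and a substantive one. For the routine implication --- that braid isotopy and positive stabilization do not change the transverse link type --- I would first note that any closed braid is positively transverse to the pages of the standard disk open book of $S^3$ and hence positively transverse to $\xi_{std}$, so a path of closed braids (in particular a conjugation carried out inside a fixed $B_n$) is automatically a transverse isotopy. For a positive stabilization $\beta \mapsto \beta\sigma_n$ I would write down the usual local picture --- a small positively wound finger of the new strand running once around the braid axis --- and check directly that it is absorbed by a transverse isotopy supported near that finger. As a consistency check, the self-linking number $sl(\hat\beta) = e(\beta) - n$, with $e$ the exponent sum and $n$ the number of strands, is unchanged by a positive stabilization but drops by $2$ under a negative one; this both confirms the local computation and explains structurally why negative stabilization must be excluded.

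For the hard implication, suppose $\beta_0$ and $\beta_1$ are braids whose closures $L_0$ and $L_1$ are transversely isotopic. First I would invoke the classical Markov theorem: since $L_0$ and $L_1$ are in particular smoothly isotopic, there is a finite sequence of braid isotopies and positive/negative (de)stabilizations connecting $\beta_0$ to $\beta_1$, and the task is to remove every negative (de)stabilization from such a sequence. Rather than manipulate the abstract sequence, I would use the geometry of the given transverse isotopy $\{L_t\}_{t\in[0,1]}$. After putting the isotopy in general position with respect to the disk fibration of $S^3 \setminus A$, where $A$ is the braid axis, $L_t$ is a closed braid for all but finitely many $t$ and degenerates at the exceptional parameters through a short list of standard local models; because $L_t$ stays positively transverse to $\xi_{std}$ throughout, the degenerations that would force a negative stabilization are obstructed. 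Concretely I would run the braid-foliation machinery in the style of Birman--Menasco on an annulus cobounded by $L_0$ and $L_1$ (equivalently, on the trace of the isotopy), tracking the characteristic foliation of the contact structure and not merely the surface foliation by intersections with the pages, so as to conclude that $\beta_0$ and $\beta_1$ are related by braid isotopies and positive (de)stabilizations only --- with any auxiliary moves in the analysis, such as exchange moves, themselves re-expressed through braid isotopy and a cancelling pair of positive stabilization and destabilization.

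The hard part will be exactly the step that upgrades the classical (both-sign) braid-foliation analysis to the transverse (positive-only) one: proving that a transverse isotopy genuinely never requires a negative stabilization. This is not formal; it requires a careful study of the elliptic and hyperbolic singularities of the characteristic foliation on the cobounding annulus, ruling out the negative elliptic--hyperbolic cancellation patterns that correspond to negative destabilizations, with the contact structure itself --- equivalently self-linking rigidity, or a Bennequin-type inequality --- as the essential input. I expect this singularity analysis, together with keeping every sign convention consistent, to occupy the bulk of the argument.
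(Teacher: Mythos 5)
The paper does not prove this statement at all: it is imported as a known result, attributed to Orevkov and Shevchishin \cite{markov-contact}, and then used as a black box (together with Bennequin's theorem) to pass between braids and transverse links. So there is no in-paper proof to compare against, and you should not attempt to reprove it in this context.

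That said, a few remarks on your sketch. The easy direction is essentially right, and the observation that $sl(\hat\beta)=e(\beta)-n$ is invariant under positive but not negative stabilization is exactly the standard sanity check. For the hard direction, be aware that your outline is really a sketch of a proof different from the one in the cited reference: Orevkov and Shevchishin prove the theorem by pseudoholomorphic-curve methods in a symplectic filling of $(S^3,\xi_{std})$, not by braid foliations. The braid-foliation route you describe --- tracking singularities of the characteristic (or page) foliation on the trace annulus of the isotopy, ruling out the elliptic/hyperbolic cancellation patterns that would force a negative destabilization, and replacing exchange moves by braid isotopy plus a cancelling positive (de)stabilization pair --- is the strategy of Wrinkle's independent proof (building on Birman--Menasco's ``Markov theorem without stabilization''). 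Both are legitimate, but if you want your write-up to match the citation you would need the symplectic argument; if you want to pursue the foliation argument you should cite Wrinkle and Birman--Menasco instead. Also, one technical caution with the foliation route: the transverse condition most naturally controls the \emph{page} foliation of the cobounding surface (intersections with disk pages of the open book), not the characteristic foliation of $\xi_{std}$; conflating the two is a common source of sign errors, so the ``essential input'' you flag at the end really does need to be isolated and proved, not asserted.

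Either way, for the purposes of this paper you should simply cite the theorem, as the author does.
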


Let $(D^2, f)$ be an abstract open book and let  $\phi: (\tilde{F}^2, \tilde{f}) \to (D^2,f)$ be an open book branch covering map as in Definition \ref{def:open-book-branch-covering-map}.  We observed in Subsection \ref{sec:open-books-branch-coverings} that $\phi$ induces an actual branch covering map $\varphi: M_{(\tilde{F}, \tilde{f})} \to M_{(D^2,f)}$ branched along a closed braid $L \subset M_{(D^2,f)} \cong S^3$ transverse to all pages of the open book decomposition given by $(D^2,f)$. 

Now, we have seen that $L$ can be thought of as a transverse link in $(S^3, \xi_{std})$.  And we know that $\xi_{std}$ is compatible with the open book decomposition described by $(D^2, f)$. We can also take the contact structure $\xi_{(\tilde{F}^2, \tilde{f})}$ in domain space $M_{(\tilde{F}^2, \tilde{f})}$ of $\varphi$ that is compatible with the open book  $(\tilde{F}^2, \tilde{f})$ (see Subsection \ref{sec:compatible-open-books}). It is natural to ask if $\varphi: (M_{(\tilde{F}^2, \tilde{f})}, \xi_{(\tilde{F}^2, \tilde{f})}) \to ( M_{(D^2,f)}, \xi_{(D^2,f)}) \cong (S^3, \xi_{std})$ is a contact branch covering map. The answer is affirmative, and it is given in the following theorem.

\begin{thm}[M. Cassey  in \cite{Casey}]\label{thm:cassey-openbooks-and-contact-structures}
Let $L$ be a link braided transversely through the pages of the open book decomposition $(D^2, id)$, which supports $(S^3,\xi_{std})$. Let $(M,\xi)$ be the covering contact manifold obtained by branching over $L$. The open book constructed as described in Subsection \ref{sec:open-books-branch-coverings} supports the contact manifold $(M,\xi)$.
\end{thm}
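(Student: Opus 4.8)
The plan is to verify Giroux's compatibility criterion for an explicitly chosen contact form. Recall that a positive contact form $\alpha$ on a closed $3$-manifold is supported by an open book $(B,\pi)$ exactly when $\alpha$ restricts to a positive $1$-form on the binding $B$ and $d\alpha$ restricts to a positive area form on the interior of every page. Hence it suffices to produce a contact form $\alpha_L$ on $M$ with $\ker\alpha_L=\xi$, where $\xi$ is the contact structure furnished by Theorem~\ref{thm:branch-covering-and-contact}, and to check these two conditions for the induced open book, whose pages are $\widetilde F_t:=\varphi^{-1}(F_t)$ and whose binding is $\widetilde B:=\varphi^{-1}(B_0)$, with $F_t$ the disk pages and $B_0$ the binding of $(D^2,\mathrm{Id})$.

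First I would fix a contact form $\alpha_0$ supporting $(S^3,\xi_{std})$ and compatible with $(D^2,\mathrm{Id})$ which, moreover, is in standard form near $L$. Since $L$ is transverse to the pages and, being a closed braid, is a positively transverse link, one can choose a tubular neighbourhood $N(L)$ with coordinates $(r,\phi,\theta)$ so that $L=\{r=0\}$, the pages meet $N(L)$ in the disks $\{\theta=\mathrm{const}\}$, and $\alpha_0=d\theta+r^2\,d\phi$ there. Put $\beta:=\varphi^{*}\alpha_0$. Away from $L':=\varphi^{-1}(L)$ the map $\varphi$ is a local diffeomorphism, so $\beta$ is contact there; moreover $d\beta=\varphi^{*}d\alpha_0$ is a positive area form on each $\widetilde F_t$ away from the branch points, being the pullback of a positive area form by an orientation-preserving local diffeomorphism, and, because $L$ is disjoint from $B_0$, near $\widetilde B$ the form $\beta$ is honestly contact with $\beta|_{\widetilde B}>0$. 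The only defects of $\beta$ are that it fails to be contact along $L'$ and that $d\beta$ vanishes at the branch points, all of which lie on $L'$.

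Next I would carry out the perturbation of Theorem~\ref{thm:branch-covering-and-contact} by hand. Over a component of $L'$ the covering reads $\varphi(r,\phi,\theta)=(r^{m},m\phi,k\theta)$, hence in these coordinates
\[
\beta \;=\; k\,d\theta \;+\; m\,r^{2m}\,d\phi .
\]
I would replace $\beta$ by $\alpha_L:=k\,d\theta+f(r)\,d\phi$, where $f(r)=m\,r^{2m}$ for $r\ge\delta$, $f(r)=\varepsilon r^{2}$ for $r$ near $0$, and $f$ is monotone increasing in between; the monotonicity is arranged by taking $\varepsilon$ small relative to $\delta^{2m-2}$. Then $\alpha_L$ is a well-defined $1$-form on $M$ equal to $\beta$ outside an arbitrarily small neighbourhood of $L'$ (in particular near $\widetilde B$), it is a positive contact form because $\alpha_L\wedge d\alpha_L=k\,f'(r)\,d\theta\wedge dr\wedge d\phi$ is everywhere positive, and by Theorem~\ref{thm:branch-covering-and-contact} one has $\ker\alpha_L=\xi$. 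It then remains to check Giroux's conditions for $\alpha_L$: on the binding, $\alpha_L=\beta$ near $\widetilde B$ and $\beta|_{\widetilde B}>0$; on a page, $d\alpha_L=d\beta>0$ outside the perturbation region, while inside it the page is $\{\theta=\mathrm{const}\}$ and $d\alpha_L|_{\mathrm{page}}=f'(r)\,dr\wedge d\phi$, which is a positive area form since $f'(r)=2\varepsilon r$ near $r=0$ and $f'(r)>0$ for all $r>0$.

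The step I expect to be the main obstacle is the normalization in the second paragraph: producing a contact form that simultaneously supports $(S^3,\xi_{std})$ \emph{and} is in the standard transverse model near $L$ with the pages appearing as $\{\theta=\mathrm{const}\}$ requires a relative Moser/Gray-type normalization of the germ of a compatible contact form along a link transverse to the pages. Along the way one must pin down orientations carefully — the page orientations, the transverse co-orientations of the bindings, and the orientation of $M$ making $\varphi$ orientation preserving — so that every positivity statement is preserved under pullback; and one should note that the pages $\widetilde F_t$ need not a priori be connected, a point which is immaterial for the statement about the supported contact structure.
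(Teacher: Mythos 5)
The paper does not prove this statement but cites it from Casey's thesis, so there is no in-house proof to compare against. Your overall plan---verify Giroux's compatibility criterion by pulling back a compatible form and perturbing near $L'$ in a local model---is the natural one and is presumably close to Casey's, and the third paragraph's positivity checks go through once the local model is in hand (modulo ``strictly increasing'' for $f$).

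The gap is in the second paragraph, and it is sharper than the Moser/Gray bookkeeping you flag. Demanding $\alpha_0 = d\theta + r^2\,d\phi$ in coordinates where simultaneously $L=\{r=0\}$ and the pages are $\{\theta=\mathrm{const}\}$ forces the Reeb field of $\alpha_0$ near $L$ to be $\partial_\theta$, i.e.\ forces $L$ to be a union of closed Reeb orbits of the compatible form. This is a genuine constraint: for the Thurston--Winkelnkemper form $\alpha_0 = d\theta + \lambda_\theta$, with $\lambda_\theta$ a $\theta$-family of primitives of an area form arranged to vanish along the (possibly nontrivially braided) strands $z_i(\theta)$, a direct computation gives a Reeb field along $L$ of the shape $\partial_\theta + \tfrac12\dot z_i(\theta) + \cdots$, which is \emph{not} tangent to $L$ unless the strand is vertical. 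So a generic compatible form does not satisfy your normalization, and the transverse-neighbourhood theorem only controls the contact germ up to a contactomorphism that need not respect the open-book fibration---it gives uniqueness of germs, not existence of a compatible form with the prescribed germ. One could try to engineer a compatible form having $L$ as Reeb orbits, but that needs an argument. The cleaner fix is to drop the strong normalization entirely: in open-book-adapted coordinates write any compatible $\alpha_0 = u\,d\theta + \mu$ with $u>0$ and $\mu$ having no $d\theta$-component. Then $\varphi^*\mu$ vanishes along $L'$ to order $m-1\ge 1$, so $\beta|_{L'} = k(u\circ\varphi)\,d\theta|_{L'} > 0$, while $d\beta|_{\mathrm{page}} = (\varphi|_{\mathrm{page}})^{*}\bigl(d\alpha_0|_{\mathrm{page}}\bigr)$ is positive away from the branch points on $L'$. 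Adding $\varepsilon\,\rho(r)\,r^2\,d\phi$ with $\rho$ a bump function supported near $L'$ restores positivity of the induced area form on pages (the added $2\varepsilon\,dx\wedge dy$ dominates near $r=0$, and is a small perturbation where $\rho'\neq 0$) and preserves the contact condition, giving Giroux compatibility without ever invoking the problematic standard transverse model.
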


In particular, if $\tilde{F}^2 \cong D^2$ then $M_{(\tilde{F}^2, \tilde{f})}$ is contactomorphic to $(S^3, \xi_{std})$. This way, we get a contact branch covering $(S^3, \xi_{std}) \to (S^3, \xi_{std})$. The construction of many branch coverings of this kind is crucial to prove Theorem \ref{thm:main-theorem}, and we can accomplish this through open book branch coverings $(D^2, \tilde{f}) \to (D^2, f)$.

\subsection{Proof of Theorem \ref{thm:main-theorem}}

To prove the theorem is enough to show that we can construct a contact covering $\varphi: (S^3, \xi_{std})  \to (S^3, \xi_{std})$  branched along some transverse knot $K$ such that $\varphi^{-}(K)$ contains a sublink transverse isotopic to the already known universal link $L$ (the one given by R. Casals and J. Etnyre in \cite{TUL}.). This way any contact manifold $(M^3, \xi)$ would be a contact covering of $(S^3, \xi_{std})$ branched along $K$, just by taking the composition of the already known contact covering $h:(M^3, \xi) \to (S^3, \xi_{std}) $ branched along $L$ followed by $\varphi$.

As we are looking for a branch covering between two $3-$spheres we can use open-books. We start by taking an open book branch covering between 2-disk $\phi: (\tilde{D}^2, \tilde{f}) \to (D^2, f)$ as explained before. This covering corresponds to a contact branch covering map given by $\varphi: (M_{(\tilde{D}^2, \tilde{f})}, \xi_{(\tilde{D}^2, \tilde{f})}) \to ( M_{(D^2,f)}, \xi_{(D^2,f)})$. In both spaces, the domain and codomain of $ \varphi$,  we have that their contact structures are supported by the standard open-book decomposition (the one with disk pages). This means that both spaces are contactomorphic to the 3-sphere with the standard contact structure.

This way of constructing branch covering between standard contact 3-spheres, naturally place the branching set $B \subset M_{(D^2,f)}$ as a closed braid. This braid can be described by decomposing $f$ as a composition of half-twists around arcs with ends in $P \subset D^2$ (see Subsection \ref{sec:open-books-branch-coverings}). And the same happens with the preimage of $\tilde{B} = \varphi^{-1}(B)$, it is naturally in braid form and its braid word is given by the decomposition of $\tilde{f}$ in half-twists along arcs with ends in $\phi^{-1}(P) \subset \tilde{D}^2$.


Observe that the braid $B$ and its preimage $\tilde{B}$ are transverse knots to the corresponding contact structures on the space where they live; which are in both cases tight contact spheres. We can then contact isotope this braids using Markov's like moves thanks to Theorem \ref{thm:markov-contact}. 

By Theorem \ref{thm:transverse-links-as-braids} we know that we can transverse isotope $L$ to be a closed braid $L'$.  We can then finish the proof if we can prove that $L'$ is a subbraid of some $\tilde{B}$. Remember that $\tilde{B}$ depends on $\phi: (\tilde{D}^2, \tilde{f}) \to (D^2, f)$, that can be chosen in many different ways (we will see that in Section \ref{sec:constructing-branch-coverings}).

Again, by Theorem \ref{thm:markov-contact}, we can relax the condition of being a subbraid to just being equivalent under positive stabilization/destabilization to a subbraid of some $\tilde{B}$. This is what we accomplish by proving Theorem \ref{thm:main-theorem-translation-to-openbooks}, but for any possible closed braid $L'$, not just the one obtained from $L$.

This completes the proof. For the rest of this document, we will focus on proving Theorem \ref{thm:main-theorem-translation-to-openbooks}.

\section{Description with graphs}\label{sec:description-with-graphs}
As we have seen in the previous section, a way to construct a contact branch covering $(S^3, \xi_{std}) \to (S^3, \xi_{std})$ is through open book branch covering maps between disks $(D^2, \tilde{f}) \to (D^2,f)$. In this section, we are going to study how to construct many of these such coverings. For that, we are going to use edge-colored graphs.

Let $G$ be a finite graph, we say that $G$ is edge-colored if all its edges are labeled, and consecutive edges (edges with a common vertex) are labeled differently. We will refer to labels as colors.  A coloring with the numbers $\{1, 2, \dots, n\}$ is called an $n$-edge-coloring. If there is no confusion, we will only say $n$-coloring instead of $n$-edge-coloring.

Given an $n$-colored graph $G$ with $m$ vertices, we can associate a presentation $\rho: F_n \to S_m$, where $F_n$ is the free group of rank $n$, and $S_m$ is the symmetric group on $m$ symbols. The presentation is constructed by assigning to each generator (a color of $G$) in $F_n$ a product of transpositions given by the edges. More formally, if $x_1, \dots, x_n$ is a set of generators of $F_n$, we define $\rho(x_i)$ as the product of all the transpositions $(a\ b)$ where $[a,b]$ is an $i$-colored edge in $G$. As no two consecutive edges have the same color, $\rho(x_i)$ is a product of disjoint transpositions. For example, for the graph in Figure  \ref{fig:disk-covering-example} the corresponding representation $\rho: F_4 \to S_6$ is the one given by $\rho(x_1) = (1\ 2)(3\ 4)$, $\rho(x_2) = (2\ 6)$, $\rho(x_3) = (2\ 3)$, and $\rho(x_4) = (2\ 5)$.

As the fundamental group of a disk minus $n$ points is a free group of rank $n$, $\rho$ can be thought of as a representation of  $\pi_1(D^2 - \{p_1, \dots, p_n\}) \to S_m$. Then we take the covering map $\hat{\phi}: \hat{E} \to D^2 - \{p_1, \dots, p_n\}$ corresponding to $\rho$.  Now, after taking the Stone-\v{C}hech compactification of $\hat{\phi}$, we obtain a covering map $\phi:E ^2 \to D^2$ branched along $\{p_1, \dots, p_n\}$.  We will say that $G$ is the graph representation of $\phi$. 

Observe that one important step to construct $\phi$ is to give an isomorphism between $\pi_1(D^2 - \{p_1, \dots , p_n\})$ and $F_n$. This isomorphism is determined by a set of generators of $\pi_1(D^2 - \{p_1, \dots , p_n\})$. So, to fix the covering $\phi$ corresponding to $G$ and remove any ambiguity  we choose the generators $\{\sigma_1, \sigma_2, \dots, \sigma_n\}$ depicted in Figure \ref{fig:disk-generators}.

\begin{figure}
    \centering
    \import{images/}{disk_generators.pdf_tex}
    \caption{Standard $n$-punctured disk generators}
    \label{fig:disk-generators}
\end{figure}

Observe that $G$ can be embedded in $E$.  Take the base point $V \in D^2$, this is the common point of the $\sigma$'s  shown in Figure \ref{fig:disk-generators}.  Then the vertices of $G$ correspond to $\phi^{-1}(V) = \{V_1, V_2, \dots, V_m\}$ and the $i$-colored edge connecting $V_a$ with $V_b$ will correspond to the lifts of the generator $\sigma_i$ at a vertex $V_a$. Basically, $G$ corresponds to the subset of $\phi^{-1}(\sigma_1 \vee \dots \vee \sigma_n)$ without lifts of $\sigma_i$ that starts and ends at the same vertex $V_a$ for some $a$.

Observe that $E$ strongly retracts to $G$. As a consequence, we have the following remark.

\begin{rem}
$E$ is a disk if and only if $G$ is a tree.
\end{rem}

From now on, we are going to focus on the case that $G$ is a tree. In order to visualize $E$ from the graph $G$, we first embed $G$ into the real plane with the following condition at each vertex $v \in G$ : when listing the colors of the edges incident to a $v$, starting from the edge of smallest color and in a clockwise direction, the resulting sequence is strictly increasing. After having $G$ embedded, just thicken it up (or more formally, take a small closed regular neighborhood of $G$ inside the plane) to get $E$. 

The graph $G$ in Figure \ref{fig:disk-covering-example-graph} can be thought of as an embedding into the Euclidean plane (thinking the paper sheet as the plane).  It satisfies the condition around the vertices that we said before. For example, around $V_2$, the list of colors of the edges is $1, 2, 3, 4$: remember that we need to start from the edge $[V_1, V_2]$ (which is the edge with the smallest color around $V_2$) and move in a clockwise direction around the edges incidents to $V_2$. After thickening $G$ up, we get the picture that is above the arrow in Figure \ref{fig:disk-covering-example-covering}. 

\begin{figure}
    \centering
    \begin{subfigure}[b]{0.5\textwidth}
        \centering
        \import{images/}{ejemplo1a.pdf_tex}
        \caption{Four colored graph with six vertices}
        \label{fig:disk-covering-example-graph}
    \end{subfigure}%
    
    \begin{subfigure}[b]{0.5\textwidth}
        \centering
        \import{images/}{ejemplo1b.pdf_tex}
        \caption{Branch covering associated with the graph above}
        \label{fig:disk-covering-example-covering}
    \end{subfigure}%
    
    \caption{An example of a covering described by a graph.}
    \label{fig:disk-covering-example}
\end{figure}

A more precise way of getting the branch covering map, and not only the covering space, is by the following process. 

\begin{enumerate}
    \item We mark $n$ points in the base disk $D^2$ and label them as $1, 2, \dots n$ (remember that $n$ is the number of colors). Instead of taking any random set of points, we are going to choose a circular array of $n$ points as in Fig \ref{fig:disk-generators}. Then we label the points in clockwise order (the disk below the arrow in Figure \ref{fig:disk-covering-example-covering} is an example of a base disk with 4 colors). 
    \item Then we take as many copies of the base disk $D^2$ as the number of vertices of $G$, and place them over the vertices of $G$.
    \item  Later, for each edge $e = [V_j, V_k]$ of color $i$ in $G$ we do as follows.
     \begin{itemize}
         \item We cut out $D_j$ and $D_k$ (the disks at $V_j$ and $V_k$ respectively) along an oriented radial arc $\kappa_i$ coming out of the $i$-marked point and ending at the disk boundary (in Figure \ref{fig:disk-covering-example-covering}, the arcs $\kappa_1, \kappa_2, \kappa_3$, and $\kappa_4$ are depicted as the four small arcs connecting the points).
         \item After cutting along $\kappa_i$, we get two copies of $\kappa_i$ in $D^2_j$ (as well as in $D^2_k$). We name one copy as positive or negative according to the orientation of $D^2_j$ (as well as in $D^2_k$).
         \item  Finally we glue the positive arc in $D^2_j$ with the negative arc in $D^2_k$, and vice-versa.
     \end{itemize}
\end{enumerate}
  
At the end of this process, we get a surface $E$ contractible to $G$ and made of copies of the base $D^2$. As $G$ is a tree, $E$ is homeomorphic to a disk. But moreover, we get a quotient map $\phi: E \to D^2$ that identifies each copy $D^2_j \subset E$ with $D^2$. The map $\phi$ is a covering map branched along $n$ point in $D^2$. A complete example of this construction is shown in Figure \ref{fig:disk-covering-example}.

As consideration for the reader, we are going to draw $G$ embedded in the plane, taking care of the colors' order around the vertices as described above. This way, the thickening up of that embedding will correspond to the covering space. But, for most of the arguments, drawing $G$ in this way is not required.

We finalize this section with some notation that relates colored graphs with representations of free groups in the symmetric group. 

\begin{defn}\label{def:graph-and-permuations} In the following definitions, we denote with $S_m$ the symmetric group of $m$ elements. Let $\rho$ be an element of $S_m$, $R = \{\rho_1, \rho_2, \dots, \rho_n\}$ be a finite subset of $S_m$, and $\phi: F_n \to S_m$ be a representation of $F_n$ into $S_m$.  We now define the graphs $G_\rho$, $G_R$, and $G_\phi$.
\begin{enumerate}
    \item We denote as $G_\rho$ the directed graph associated to $\rho$. The graph consists of $m$ vertices $v_1, \dots v_m$, and directed edges $v_i \to v_j$ if and only if $\rho(i) = j$ with $i \neq j$. 
    
    If $\rho \in S_m$ is a product of disjoint transpositions, $G_{\rho}$ can be thought of as an undirected graph.
    
    \item We denote as $G_R$ the directed $n$-colored graph with $m$ vertices constructed by taking the union of directed graphs $G_{\rho_i}$ for all $i$. Where we colored each edge $v_i \to v_j$ with $k$ if and only if $\rho_k(i) = j$ with $i \neq j$. Multiple parallel edges may appear. 
    
    If $R$ is a set of permutations of order 2 (product of transpositions), then $G_R$ is an undirected $n$-colored graph.
    
    \item We denote $G_\phi = G_R$ where $R =\{\phi(\sigma_1), \dots, \phi(\sigma_n)\}$ where  $\{\sigma_1, \sigma_2, \dots, \sigma_n\}$ is a base of generators for $F_n$. So, $G_\phi$ depends on the choice of generators. 
\end{enumerate}

\end{defn}

Some important graphs that we are going to use are the following:

\begin{defn}\label{def:linear_graph_Lm}
We define the following graphs:

\begin{enumerate}
    \item A \emph{linear graph} is a connected tree graph (not necessarily colored) with valences no greater than 2. 
    \item We denote as $L_m$ to the $m$-colored linear graph with exactly $m+1$ vertices named $v_0, v_1, \dots,$ $v_m$, and $m$ edges $[v_i, v_{i+1}]$ with $i=0, \dots, m-1$. Each edge $[v_i, v_{i+1}]$ is colored with $i+1$.
    \item Let $L_{(m,2)}$ be a $2$-colored linear graph with exactly $m$ edges. Using the same notation as above, we color each edge $[v_i,v_{i+1}]$ as 1 if $i+1$ is odd or 2 otherwise. 
    
\end{enumerate}
\end{defn}

\subsection{Half-twist lifting}\label{sec:half-twist-lifting}
Recall that we want to construct open book branch coverings between disks (as in Definition \ref{def:open-book-branch-covering-map}) We already know how to construct many branch covering maps $\phi: D^2 \to D^2$ using graphs. What is missing to get an open book branch covering is a pair of disk automorphism $f,\tilde{f}$ such that $f\circ \phi = \phi \circ \tilde{f}$. That is why we are interested in automorphisms $f:D^2 \to D^2$ where such $\tilde{f}$ exists. 
\begin{defn}
Let $\phi: \tilde{E}^2 \to E^2 $ be a branch covering map with branching set $P = \{p_1, \dots, p_n\} \subset E^2$. We will say that a map $f:(E,P) \to(E,P)$ is \emph{liftable} if there is map $\tilde{f}: \tilde{E}^2 \to \tilde{E}^2$ such that $f\circ \phi = \phi \circ \tilde{f}$. 

We may also say that $f$ \emph{lifts} to $\tilde{f}$ with respect to $\phi$. When $\phi$ can be inferred from the context, we will often say that $f$ \emph{lifts} to $\tilde{f}$ or we may simply say $f$ lifts.
\end{defn}

We are particularly interested in liftable automorphisms $f:(D^2,P) \to (D^2,P)$ of a punctured disk. It is a well-known fact that any automorphism $f$ of the punctured disks $D^2$ is homotopic to a product of half-twist (see definition below) relative to $\partial D^2$. Then, it is natural to ask whether or not a half-twist (or a power of it) is liftable.

A \emph{half-twist} is a map that takes an arc $\alpha$ on a surface $E$ and twists everything around a neighborhood of $\alpha$ (a disk) and leaves the rest of $E$ intact (see Figure \ref{fig:arc-twist}). 

To define half-twist maps more precisely, one must decompose $E$ as $E = D \cup \hat{E}$ where $D$ is a regular neighborhood of $\alpha$ (homeomorphic to a disk) and $\hat{E}=\overline{E - D}$. Then, we define a function $c_\alpha: E \to E$ that is the identity on $\hat{E}$ and on $D$ does the following:
\begin{enumerate}
    \item Rotates a half-turn an embedded disk containing $\alpha$ and disjoint from the boundary,
    \item In the annulus that remains, it is the function $(\theta, t) \to (\theta + t\pi/2, t)$. 
\end{enumerate}

\begin{figure}
    \centering
\begingroup%
  \makeatletter%
  \providecommand\color[2][]{%
    \errmessage{(Inkscape) Color is used for the text in Inkscape, but the package 'color.sty' is not loaded}%
    \renewcommand\color[2][]{}%
  }%
  \providecommand\transparent[1]{%
    \errmessage{(Inkscape) Transparency is used (non-zero) for the text in Inkscape, but the package 'transparent.sty' is not loaded}%
    \renewcommand\transparent[1]{}%
  }%
  \providecommand\rotatebox[2]{#2}%
  \newcommand*\fsize{\dimexpr\f@size pt\relax}%
  \newcommand*\lineheight[1]{\fontsize{\fsize}{#1\fsize}\selectfont}%
  \ifx\svgwidth\undefined%
    \setlength{\unitlength}{245.035732bp}%
    \ifx\svgscale\undefined%
      \relax%
    \else%
      \setlength{\unitlength}{\unitlength * \real{\svgscale}}%
    \fi%
  \else%
    \setlength{\unitlength}{\svgwidth}%
  \fi%
  \global\let\svgwidth\undefined%
  \global\let\svgscale\undefined%
  \makeatother%
  \begin{picture}(1,0.34869548)%
    \lineheight{1}%
    \setlength\tabcolsep{0pt}%
    \put(0,0){\includegraphics[width=\unitlength,page=1]{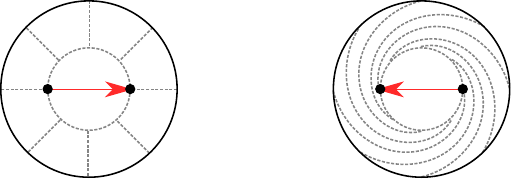}}%
    \put(0.50001528,0.24766722){\color[rgb]{0,0,0}\makebox(0,0)[t]{\lineheight{1.25}\smash{\begin{tabular}[t]{c}$\tau_{\alpha}$\end{tabular}}}}%
    \put(0.50001811,0.1567439){\color[rgb]{0,0,0}\makebox(0,0)[t]{\lineheight{1.25}\smash{\begin{tabular}[t]{c}$\to$\end{tabular}}}}%
  \end{picture}%
\endgroup%

    \caption{Arc twist image}
    \label{fig:arc-twist}
\end{figure}

Let $\phi$ be the branch covering given by a graph $G$, we would like to give a criterion to decide if a power (usually a square or a cube) of a half-twist lifts with respect to $\phi$.

Let us start with the simplest example that will be the source of many others. Let $L_2$ be the 2-colored connected graph made of three vertices and two edges (see Definition \ref{def:linear_graph_Lm} ). Consider the 3-fold branch covering map $\phi_3:D^2 \to D^2 $ represented by $L_2$ (see Figure \ref{fig:example-L2-phi3}).  The map $\phi_3$ is branched along two points in $D^2$; say $a, b \in D^2$. Take $\alpha$ a simple arc connecting $a$ and $b$. It is known (see for example \cite{some_universal_knots}) that $\tau_{\alpha}^3$ lifts to a half-twist along the arc $\tilde{\alpha} =\phi_3^{-1}(\alpha)$. This observation can be generalized as stated on the following lemma.
\begin{figure}
    \centering
    \import{images/}{ejemplo_L2_phi3.pdf_tex}
    \caption{Left: The linear graph $L_2$. Right: The 3-fold branch covering $\phi_3$ represented by $L_2$. The arc $\widetilde{\alpha}$ is the preimage of $\alpha$ under $\phi_3$ .}
    \label{fig:example-L2-phi3}
\end{figure}

\begin{lem}\label{lem:on-Lm_m-powers-lifts}
Let $\phi: \tilde{D}^2 \to D^2$ be the $m$-fold branch covering between 2-disks represented by the graph $L_{(m,2)}$ (see Definition \ref{def:linear_graph_Lm} ) with $m \geq 2$. Let $\alpha$ be an arc connecting the two branching points $a, b \in D^2$ of $\phi$. Then, there is a map $T:\tilde{D}^2 \to \tilde{D}^2$  such that $\tau^k_{\alpha} \circ \phi = \phi \circ T $ if and only if $k$ is multiple of $m+1$.

Moreover, the function $T$ is homotopic to the $k/(m+1)$-th power of a half-twist around the arc $\phi^{-1}(\alpha)$. 
\end{lem}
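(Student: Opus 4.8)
\textit{(Throughout I take the conclusion in the form in which it is used: the sought $T$ is required to restrict to the identity on $\partial\tilde D^2$; without that requirement even powers of $\tau_\alpha$ lift and the statement is false.)}

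The plan is to convert the question into one about the monodromy $\rho\colon \pi_1(D^2\setminus\{a,b\})\cong F_2\to S_{m+1}$ that the graph $L_{(m,2)}$ encodes. Fixing the standard generators $\sigma_a,\sigma_b$ about the two branch points, normalised so that $\rho(\sigma_a)$ collects the colour-$0$ edges and $\rho(\sigma_b)$ the colour-$1$ edges, we have $\rho(\sigma_a)=(v_0\,v_1)(v_2\,v_3)\cdots$ and $\rho(\sigma_b)=(v_1\,v_2)(v_3\,v_4)\cdots$. The single computational input I would establish first is that the monodromy of $\partial D^2$, namely $\rho(\delta)$ with $\delta=\sigma_a\sigma_b$, is an $(m+1)$-cycle (it is the bipartite Coxeter element of type $A_m$; a direct induction on $m$ also works). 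Hence $\partial\tilde D^2=\phi^{-1}(\partial D^2)$ is a single circle, mapping to $\partial D^2$ by the connected cyclic $(m+1)$-fold cover, and the same holds for $\phi^{-1}(\partial N(\alpha))$, which is isotopic to $\partial\tilde D^2$ in $\tilde D^2$.

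Next I would record the lifting criterion in the form needed: $\tau_\alpha^k$ admits a $T$ with $\tau_\alpha^k\circ\phi=\phi\circ T$ and $T|_{\partial\tilde D^2}=\mathrm{id}$ if and only if $(\tau_\alpha^k)_\#(H)=H$ as subgroups of $F_2$, where $H=\phi_\#\pi_1\big(\tilde D^2\setminus\phi^{-1}\{a,b\}\big)$ with base point near $\partial D^2$. Indeed if $(\tau_\alpha^k)_\#(H)=H$ then $\tau_\alpha^k$ has a base-point-preserving lift on the punctured cover, which extends over the branch points, and its restriction to $\partial\tilde D^2$ is a deck transformation of a connected cover fixing a point, hence the identity; the converse is clear. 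Alongside this I would compute that $(\tau_\alpha^2)_\#$ is conjugation by $\delta$ and $(\tau_\alpha)_\#$ is the half-twist automorphism $\sigma_a\mapsto\sigma_a\sigma_b\sigma_a^{-1},\ \sigma_b\mapsto\sigma_a$; that $\psi(\delta)=\delta$ for $\psi:=(\tau_\alpha)_\#$; and that $\mathrm{Deck}(\phi)=C_{S_{m+1}}(\mathrm{im}\,\rho)$ is exactly the group of colour-preserving automorphisms of the path $L_{(m,2)}$, which is trivial for $m$ even and is $\mathbb{Z}/2$ (the end-to-end flip $v_i\leftrightarrow v_{m-i}$) for $m$ odd; in particular $|\mathrm{Deck}(\phi)|=\gcd(m+1,2)$.

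With these in hand the proof becomes a computation. Writing $H=\rho^{-1}(\mathrm{Stab}(v))$, the condition $(\tau_\alpha^k)_\#(H)=H$ says that a certain power of $\rho(\delta)$ moves $v$ inside its $\mathrm{Deck}(\phi)$-orbit. For even $k=2j$ that power is $\rho(\delta)^{\pm j}$; since the $\mathrm{Deck}$-orbit partition of the fibre is $\mathrm{im}\,\rho$-invariant and $\rho(\delta)$ is an $(m+1)$-cycle, it cyclically permutes the $\gcd(m+1,2)$-element orbits, so the condition is $\tfrac{m+1}{\gcd(m+1,2)}\mid j$, i.e.\ $(m+1)\mid 2j=k$. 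For odd $k$ with $m$ odd, $\rho\circ\psi$ is not conjugate to $\rho$ (the image of $\sigma_a$ changes cycle type), so $\tau_\alpha^k$ does not lift at all, in agreement with $(m+1)\nmid k$ since $m+1$ is even. For odd $k$ with $m$ even one has $\rho\circ\psi=\mathrm{conj}_q\circ\rho$ for a power $q$ of $\rho(\delta)$ (one computes $q=\rho(\delta)^{-m/2}$), whence $(\tau_\alpha^k)_\#(H)=H$ reduces, using $\psi(\delta)=\delta$ and $\mathrm{Deck}(\phi)=1$, to $(m+1)\mid k$ again. So $\tau_\alpha^k$ lifts (rel $\partial$) iff $(m+1)\mid k$. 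The geometric content of the ``if'' direction is that $\tilde\alpha:=\phi^{-1}(\alpha)$ is a single arc embedded in the interior of $\tilde D^2$ and that $\tau_\alpha^{m+1}$ lifts precisely to the half-arc-twist $\tau_{\tilde\alpha}$ along it (for $m=2$ this is the statement of \cite{some_universal_knots}), so that $\tau_{\tilde\alpha}^{\,j}$ is the lift of $\tau_\alpha^{(m+1)j}$.

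I expect the main obstacle to be exactly the bookkeeping of the last paragraph, because $\phi$ is in general an irregular branched cover: one must identify $\mathrm{Deck}(\phi)$ on the nose and keep track of which point of the fibre the lift of $\tau_\alpha^k$ sends the base point to, so as to see that the residue governing lifting is $k$ modulo $m+1$ and nothing coarser. The only genuinely computational step, that $\rho(\sigma_a\sigma_b)$ is an $(m+1)$-cycle, is routine; everything after that is formal.
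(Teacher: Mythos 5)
Your proof is correct, but it follows a genuinely different route from the paper's for the ``only if'' direction. The paper's proof is shorter and more geometric: after invoking the lifting criterion for the ``if'' direction, it picks a diameter $\beta$ of $D^2$ separating $a$ from $b$, notes that the lift of $\tau_\alpha^k(\beta)$ at the boundary preimage $P_i$ terminates at $Q_{i+k}$ (indices mod $m+1$), and concludes that a lift $T$ fixing $\partial\tilde D^2$ forces this cyclic permutation to be trivial, i.e.\ $(m+1)\mid k$. This sidesteps any need to identify $\mathrm{Deck}(\phi)$ or worry about the irregularity of the cover. You instead work entirely in the monodromy representation $\rho\colon F_2\to S_{m+1}$, compute $\mathrm{Deck}(\phi)=C_{S_{m+1}}(\mathrm{im}\,\rho)$, and split into parity cases on $k$ and $m$; this is more involved and computational, but it is systematic and would adapt to other colored graphs where the paper's ad hoc boundary-arc picture is harder to draw. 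One thing you make explicit that the paper leaves implicit is that $T$ must restrict to the identity on $\partial\tilde D^2$ (otherwise $\tau_\alpha^2$, being a boundary-parallel Dehn twist, lifts trivially); the paper's ``permutation must be the identity'' step silently uses this same hypothesis, so your parenthetical is a useful clarification rather than a disagreement. Both arguments are sound.
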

\begin{proof}
Consider the covering map resulting from removing the branching set $\{a,b \}$, $\phi: \tilde{D}^2 - \phi^{-1}(\{a,b\}) \to D^2 -\{a,b\}$. Now we make use of the lifting theorem for covering maps. This means we need to take the image of $F = \pi_1(\tilde{D}^2 - \phi^{-1}(\{a,b\}))$ under  $\tau^{m+1}_{\#} \circ \phi_{\#}$ and prove that it is a subgroup of $\phi_{\#}(F)$. This is enough to show that $T:\tilde{D}^2 \to \tilde{D}^2$ exists for every $k$ multiple of $m+1$. 

Now, we need to show that if $k$ is not a multiple of $m+1$, then there is no such $T$. For that, take any point $P$ on the boundary of the base disk $D^2$ and connect it with its antipodal $Q$ using a diameter $\beta$ passing between $a$ and $b$. 

Let $P_1, P_2, \dots, P_{m+1}$ be the preimages of $P$ under $\phi$. These preimages belong to the boundary of the domain of $\phi$, the disk $\tilde{D}$. By reindexing, we can make the sequence $P_1, P_2, \dots, P_{m+1}$ appear in that order when traveling around $\partial \tilde{D}$. For every $i$, we name $\beta_i$ to be the component of $\phi^{-1}(\beta)$ that contains $P_i$; the arc $\beta_i$ is also known as the lift of $\beta$ at $P_i$. We call $Q_i$ the other end of $\beta_i$.

Observe that the arc $\overline{\beta} = \tau^k_{\alpha}(\beta)$ has the same ends as $\beta$. So, $\phi^{-1}(\overline{\beta})$ will be a set of arcs connecting $P_i$'s with  $Q_i$'s. This creates a permutation of the sub-indexes $i$'s. By drawing the covering, we can observe that the permutation is exactly $i \to i+k$; the lift of $\overline{\beta}$ at $P_i$ ends at $Q_{i+k}$. If $\tau^k_{\alpha}$ lifts, then the mentioned permutation has to be the identity. This proves the first part.

To prove that $T$ is a half-twist around $\gamma = \phi^{-1}(\alpha)$ we proceed as follows. Observe fist that $T$ leaves $\gamma$ invariant; in fact, $T|_{\gamma}$ is either the identity or a reflection depending on the parity of $k$. Now, let $\tilde{A}$ and $A$ be the two annuli obtained after cutting $\tilde{D^2}$ along $\gamma$ and $D^2$ along $\alpha$, respectively. After cutting, we obtain an induced covering map $\hat{\phi}: \tilde{A} \to A$ and two induced automorphisms $\hat{T}: \tilde{A} \to \tilde{A}$ and $\hat{\tau}^k_{\alpha}: A \to A$ such that $\hat{\phi} \circ \hat{T} = \hat{\tau}^k_{\alpha} \circ  \hat{\phi}$.  


Observe now that we can find a coordinate system for  $A \cong S^1 \times I$ of the form $(\theta, t) $ where $\theta \in [0, 2\pi]$ and $t \in [0,1]$, such that $\hat{\tau}^k_{\alpha}(\theta, t) = (\theta + t\cdot k \cdot \pi, t)$. And we can lift the coordinate system through $\phi$ to a coordinate system $(\tilde{\theta}, \tilde{t})$ for $\tilde{A} \cong S^1 \times I$. With this new coordinate system, we have that  $\phi(\tilde{\theta}, \tilde{t}) =  ((m+1)\cdot \tilde{\theta}, \tilde{t})$. By the fact that $\hat{\phi} \circ \hat{T} = \hat{\tau}^k_{\alpha} \circ  \hat{\phi}$ we got that $$\hat{T}(\tilde{\theta}, \tilde{t}) = (\tilde{\theta}+ \frac{\tilde{t}\cdot k \cdot \pi}{(m+1)}, \tilde{t})$$

As we already prove, $k$ is a multiple of $m+1$,  which means that $\hat{T}$ is a Denh twist around the annulus $A$ with angle  $\frac{k}{m+1} \cdot \pi $.  By taking the quotient $\tilde{A} \to \tilde{D}^2$, we can prove that $T$ is the $\frac{k}{m+1}$-th power of a half-twist around $\gamma$. 
\end{proof}

In the previous lemma, we can also include $L_1$, which is 1-colored instead of 2-colored as the rest. In this case, the corresponding covering $\phi_2: \tilde{D}^2 \to D^2$ branches along one point and it satisfies that  $\tau^2_{\alpha}$ lifts to a half-twist around $\phi^{-1}(\alpha) = \tilde{\alpha}$. Here, $\alpha$ is an arc connecting any point in the interior of $D$ with the branching point.

In an even more general setting, let us take a branch covering $\phi: E^2 \to D^2$ with a tree $G$ as graph representation. Let $\{p_1, p_2, \dots, p_n\} \subset D^2$ be the branching set of $\phi$. And let $\alpha$ be a properly embedded arc in $D^2 - \{p_1, \dots, p_n\}$ connecting two points $p_i$ and $p_j$. We would like to know all numbers $n \in \mathbb{Z}$ such that $\tau_{\alpha}^n$ lifts to $E^2$ under $\phi$. 

Let $\beta_1, \beta_2, \dots, \beta_k$ be the connected components of  $\phi^{-1}(\alpha)$. For each $\beta_i$, the map $\phi |_{\beta_i}$ covers $\alpha$ in such a way that the preimage of every point $p$ in the interior of $\alpha$ has the same number of points. We call this number the degree of $\beta_i$ and we will denote it as $deg(\beta_i) = |\phi^{-1}(p) \cap \beta_i|$ for any $p \in Int(\alpha)$. 

\begin{prop}\label{prop:lifting-of-tk-and-degree}
On the setting described above, $\tau^N_{\alpha}$ lifts if and only if $N$ is a multiple of $deg(\beta_i)$ for all $i$.
Moreover, the lifting of $\tau^N_{\alpha}$ is the composition of $\tau_{\beta_i}^{n_i}$ where $n_i = N/deg(\beta_i)$ for  $i=1, \dots, k$. 
\end{prop}
\begin{proof}
Let $B = N(\beta_i)$ be a regular neighborhood of $\beta_i$. Observe that $A=\phi(N(\beta_i))$ is a regular neighborhood of $\alpha$.  Now, $\phi |_{B}:B \to A$ is a covering map branched along two points (the ends of $\alpha$), the number of sheets of $\phi|_B$ is exactly $deg(\beta_i)$. If $\tau^N_{\alpha}$ lifts under $\phi$ it will lift under $\phi|_B$, and in the other way around.  So, it will be enough to prove that $\tau^N_{\alpha}$ lifts under $\phi|_B$ if and only if $N$ is a multiple of $deg(\beta_i)$. 

As the graph representation $G$ of the branch covering is a tree, then $\phi|_B: B \to A$ is also represented with a tree $H$. As there are only two branching points in $A$, it means that $H$ has to be a 2-colored graph. This implies that the valence at any vertex of $H$ is lower or equal than 2. This makes $H$ isomorphic to the linear graph $L_{(2,m)}$ (see Definition \ref{def:linear_graph_Lm}) where $m =  deg(\beta_i)-1$.  Then the result follows from Lemma \ref{lem:on-Lm_m-powers-lifts}, which states as well, that the lifting of $\tau^{N}_{\alpha}$ is $\tau^{n_i}_{\beta_i}$
\end{proof}

The following corollary is an immediate consequence of the previous proposition.

\begin{cor}\label{cor:lifting-conditions}
Let $G$ be an $n$-colored graph that represents a covering map $\phi: E^2 \to D^2$, and let $\alpha$ be the arc on $D^2$ shown in Figure \ref{fig:arc-alpha-connecting-1-and-2}. Then 

\begin{enumerate}
    \item[a)] $\tau_{\alpha}^3:D^2 \to D^2$ lifts to a map on $E^2$ if the connected components of the $\{1,2\}$-subgraph of $G$ are isolated vertices or isomorphic to $L_2$. 
    \item[b)] $\tau_{\alpha}^2:D^2 \to D^2$ lifts to a map on $E^2$ if the connected components of the $\{1,2\}$-subgraph of $G$ are isolated vertices or isomorphic to $L_1$. 
\end{enumerate}
\end{cor}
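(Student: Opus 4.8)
The plan is to reduce the statement to Proposition~\ref{prop:lifting-of-tk-and-degree}, by pinning down what the arcs $\beta_i$ of $\phi^{-1}(\alpha)$ and their degrees look like in terms of the colored graph. The arc $\alpha$ of Fig.~\ref{fig:disk-generators} joins the two branch points $p_1,p_2$ whose meridians are the standard generators $\sigma_1,\sigma_2$, and the half-arc-twist $\tau_\alpha$ is supported in a disk neighborhood $A$ of $\alpha$ that meets the branch set only in $\{p_1,p_2\}$. Hence $\tau_\alpha^k$ lifts under $\phi$ if and only if it lifts under the restriction $\phi\colon\phi^{-1}(A)\to A$. This restriction is a branched cover of the twice-punctured disk $A$, and, choosing $A$ so that it contains the basepoint together with $\sigma_1,\sigma_2$, its monodromy is given by the subgroup $\langle\rho(x_1),\rho(x_2)\rangle$ of $S_m$ generated by the colors $1$ and $2$. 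By Definition~\ref{def:graph-and-permuations} the colored graph representing $\phi^{-1}(A)\to A$ is then exactly the subgraph $H$ of $G$ spanned by the edges colored $1$ or $2$, that is, the $\{1,2\}$-subgraph.

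Next I would analyze $H$ and its arcs over $\alpha$. The components of $\phi^{-1}(A)$ correspond to the orbits of $\langle\rho(x_1),\rho(x_2)\rangle$, hence to the connected components $C_1,\dots,C_s$ of $H$; each $C_i$ is a subtree of the tree $G$ using only two colors, so it has valence at most $2$ at every vertex and is therefore a linear graph, say $C_i\cong L_{(r_i,2)}$ with $r_i$ edges and $r_i+1$ vertices (so $r_i=0,1,2$ correspond respectively to an isolated vertex, to $L_1$, and to $L_2$). Over $\alpha$, the component of $\phi^{-1}(A)$ carrying $C_i$ contains a single arc $\beta_i$ of $\phi^{-1}(\alpha)$, and this is precisely the local situation treated in the proof of Proposition~\ref{prop:lifting-of-tk-and-degree}: a regular neighborhood of $\beta_i$ covers a regular neighborhood of $\alpha$ with graph $L_{(r_i,2)}$, so by Lemma~\ref{lem:on-Lm_m-powers-lifts} the twist $\tau_\alpha^k$ lifts there if and only if $r_i+1$ divides $k$. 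Therefore, by Proposition~\ref{prop:lifting-of-tk-and-degree}, $\tau_\alpha^k$ lifts under $\phi$ if and only if $r_i+1$ divides $k$ for every $i$.

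The conclusion now follows by inspecting the allowed components. For part (a), each $C_i$ is an isolated vertex or $\cong L_2$, so $r_i+1\in\{1,3\}$, and every element of $\{1,3\}$ divides $3$; hence $\tau_\alpha^3$ lifts. For part (b), each $C_i$ is an isolated vertex or $\cong L_1$, so $r_i+1\in\{1,2\}$, and every element of $\{1,2\}$ divides $2$; hence $\tau_\alpha^2$ lifts. The one delicate point, and the step I would be most careful about, is the reduction in the first paragraph: one must check that the neighborhood $A$ of $\alpha$ can be isotoped so as to contain the standard generators $\sigma_1,\sigma_2$ of Fig.~\ref{fig:disk-generators}, so that $\phi^{-1}(A)\to A$ is represented by the $\{1,2\}$-subgraph of $G$ itself and not merely by a conjugate of it. Granting that, the rest is bookkeeping with trees and divisibility.
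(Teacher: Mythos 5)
Your proof is correct and follows essentially the same route as the paper: restrict $\phi$ over a punctured-disk neighborhood $A$ of $\alpha$, identify its monodromy with the $\{1,2\}$-subgraph of $G$, observe that each component is a linear graph $L_{(r,2)}$ carrying a single arc of $\phi^{-1}(\alpha)$, and conclude via Lemma~\ref{lem:on-Lm_m-powers-lifts} and Proposition~\ref{prop:lifting-of-tk-and-degree}. The ``delicate point'' you flag at the end — that the restriction over $A$ is represented by the $\{1,2\}$-subgraph with the standard generators and not merely a conjugate of it — is precisely what the paper's proof addresses via the triangular region $T$ bounded by $\sigma_1$, $\alpha$, $\sigma_2$ together with the conjugating arc $\gamma$ into $T$, so you have correctly located the one nontrivial verification.
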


\begin{figure}
    \centering
    \import{images/}{arc_alpha_connecting_1_and_2.pdf_tex}
    \caption{Arc $\alpha$ connecting points $1$ and $2$.}
    \label{fig:arc-alpha-connecting-1-and-2}
\end{figure}
\begin{proof}

Let $N_\alpha$ be a small regular neighborhood around $\alpha$. Let $M = \phi^{-1}(N_\alpha)$ be the preimage of $N_\alpha$ under $\phi$. We will show that the graph representation of $\phi|_M$ is the $\{1,2\}$-subgraph of $G$. For that, recall that $G$ represents a function $\rho: \pi_1(D^2-\{p_1, p_2, \dots, p_n\}) \to S_m$ (see Definition \ref{def:graph-and-permuations}) where the image of the generators $\rho(\sigma_1), \rho(\sigma_2), \dots,  \rho(\sigma_n)$ correspond to edge-colors of $G$. In particular, $\rho(\sigma_1)$ and $\rho(\sigma_2)$ correspond to 1 and 2 colored edges of $G$. 

Now, to construct the graph representation of $\phi|_M: M \to N_\alpha$, we need to choose a set of generators for $\pi_1(N_\alpha - \{p_1, p_2\})$. Observe first that our choice of $\alpha$ satisfies that $\sigma_1$, $\alpha$, and $\sigma_2$, bound a triangular region $T \subset N_\alpha$. Let us take an arc $\gamma \subset T$ connecting a point $* \in \partial N_\alpha$ with the base point $\sigma_1 \cap \sigma_2$. As $T$ is contractible, the conjugations $\sigma_1^\gamma = \gamma \cdot \sigma_1 \cdot \gamma^{-1}$ and  $\sigma_2^\gamma = \gamma \cdot \sigma_2 \cdot \gamma^{-1}$ are isotopic to a pair of generators $\sigma'_1$and $\sigma'_2$ for $\pi_1(N_\alpha - \{p_1, p_2\})$. This immediately implies that the representation of $\phi|_M$ is equivalent to $\rho': \pi_1(N_\alpha - \{p_1, p_2\}) \to S_m$ that sends $\rho'(\sigma'_i)=\rho(\sigma_i)$ for $i=1,2$. 

Part (a) condition on the components of $\{1,2\}$-subgraph $G$ implies that $\phi$ restricted into connected components of $M$ leads to a homeomorphism or an $L_2$ branch covering of $N_\alpha$ . Then part (a) follows from Proposition \ref{prop:lifting-of-tk-and-degree}.  The proof for part (b) is similar.
\end{proof}

For the first part of the corollary, we can rephrase the condition on the $\{1,2\}$-subgraphs of $G$ by saying that every $1$-colored edge must be connected with one and only one $2$-colored edge; and in the other way around. And for the second part by saying that $1$-colored edges are disjoint from $2$-colored edges.

 The fact that $\alpha$, $\sigma_1$, and  $\sigma_2$ bound a region disjoint from the rest of the generators is the main reason because the $\{1,2\}$-subgraphs of $G$ represents the covering $\phi|_M:M \to N_\alpha$. 

Using the previous observations, we can see now that Corollary \ref{cor:lifting-conditions} gives us a condition to recognize the lifting of a square or a cube of a half-twist around any arc $\alpha$ connecting two branching points. It is enough to change the generators of $\pi_1(D^2-\{p_1, p_2, \dots, p_n\})$ in such a way that $\alpha$, together with the two generators around its endpoints enclose a triangular region disjoint from the other generators (see Figure \ref{fig:triangular-region}). Then, with the new base, we can draw the corresponding graph and check if the graph satisfies the conditions on Corollary \ref{cor:lifting-conditions}.

In particular, we can take the straight arc $\alpha_i$ that connects $p_i$ and $p_{i+1}$ and enclose a triangular region together with the generators $\sigma_i$ and $\sigma_{i+1}$. Then, if the $\{i, i+1\}$-subgraph of $G$  has only connected components isomorphic to $L_2$ or isolated points, then $\tau_{\alpha_i}^3$ lifts; similarly, if the connected components are isolated points or $L_1$ graphs, $\tau_{\alpha_i}^2$ lifts.

For an arbitrary arc, different from the arcs $\alpha_i$'s, it is not possible to just look at a subgraph of $G$. We actually need to modify $G$ accordingly using a new suitable set of generators for $\pi_1(D^2-\{p_1, p_2, \dots, p_n\})$ . But fortunately, we don't need to draw the full new graph $G$. We only need the subgraph corresponding to the generators containing the mentioned triangular region. This subgraph can be easily constructed as a 2-colored graph, the edges corresponding to the permutations given by the image of the two generators under $\rho: \pi_1(D^2-\{p_1, p_2, \dots, p_n\}) \to S_m$.

\begin{figure}
    \centering
\begingroup%
  \makeatletter%
  \providecommand\color[2][]{%
    \errmessage{(Inkscape) Color is used for the text in Inkscape, but the package 'color.sty' is not loaded}%
    \renewcommand\color[2][]{}%
  }%
  \providecommand\transparent[1]{%
    \errmessage{(Inkscape) Transparency is used (non-zero) for the text in Inkscape, but the package 'transparent.sty' is not loaded}%
    \renewcommand\transparent[1]{}%
  }%
  \providecommand\rotatebox[2]{#2}%
  \newcommand*\fsize{\dimexpr\f@size pt\relax}%
  \newcommand*\lineheight[1]{\fontsize{\fsize}{#1\fsize}\selectfont}%
  \ifx\svgwidth\undefined%
    \setlength{\unitlength}{122.64218668bp}%
    \ifx\svgscale\undefined%
      \relax%
    \else%
      \setlength{\unitlength}{\unitlength * \real{\svgscale}}%
    \fi%
  \else%
    \setlength{\unitlength}{\svgwidth}%
  \fi%
  \global\let\svgwidth\undefined%
  \global\let\svgscale\undefined%
  \makeatother%
  \begin{picture}(1,0.80217638)%
    \lineheight{1}%
    \setlength\tabcolsep{0pt}%
    \put(0,0){\includegraphics[width=\unitlength,page=1]{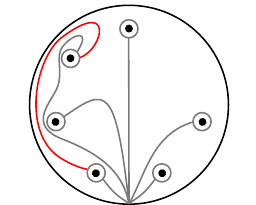}}%
    \put(0.66587696,0.50358497){\color[rgb]{0,0,0}\rotatebox{-53.058763}{\makebox(0,0)[t]{\lineheight{1.25}\smash{\begin{tabular}[t]{c}$\dots$\end{tabular}}}}}%
    \put(0.10165286,0.4860783){\color[rgb]{0,0,0}\makebox(0,0)[rt]{\lineheight{1.25}\smash{\begin{tabular}[t]{r}$\alpha$\end{tabular}}}}%
    \put(0.3050801,0.52300973){\color[rgb]{0,0,0}\makebox(0,0)[lt]{\lineheight{1.25}\smash{\begin{tabular}[t]{l}$\sigma_2$\end{tabular}}}}%
    \put(0,0){\includegraphics[width=\unitlength,page=2]{triangular-region.pdf}}%
    \put(0.34504087,0.07052491){\color[rgb]{0,0,0}\makebox(0,0)[rt]{\lineheight{1.25}\smash{\begin{tabular}[t]{r}$\sigma_1$\end{tabular}}}}%
  \end{picture}%
\endgroup%

    \caption{Triangular region enclosed by $\alpha$ and the two generators around its endpoints}
    \label{fig:triangular-region}
\end{figure}

To summarize, given an arc $\alpha$ connecting two branching point $p$ and $q$ in\\ $\{p_1, \dots, p_n\} \subset D^2$ first we need to find a pair of arcs $\gamma_p$ and $\gamma_q$, such that:
\begin{enumerate}
    \item $\gamma_p$ connects $p$ and the base point
    \item $\gamma_q$ connects $q$ and the base point
    \item $\gamma_p$, $\gamma_q$, and $\alpha$ enclose a triangular region from $D^2$ without any other branching point on its interior.
\end{enumerate}
After finding those arcs, we use $\gamma_p$ to create a base generator $\sigma_p$ that travels along $\gamma_p$ until it almost touches $p$ and then loops around a small neighborhood of $p$ and returns back to the base point; similarly, we define $\sigma_p$. Then, we can create the graph $G_{\{p,q\}}$ associated with the permutations $\rho(\sigma_p)$ and $\rho(\sigma_q)$ as described in Definition \ref{def:graph-and-permuations}; here $\rho: \pi_1(D^2-  \{p_1, \dots, p_n\})$ is the presentation associated with the branch covering $\phi: E^2 \to D^2$

 \begin{prop}\label{prop:lifting-alpha-twist-conditions}
 Let $\alpha$ and $G_{\{p,q\}}$  as above, then
 \begin{enumerate}
    \item $\tau_{\alpha}^3:D^2 \to D^2$ lifts to a map on $E^2$ if the connected components of the $G_{\{p,q\}}$ are either isolated vertices or isomorphic to $L_2$. 
    \item $\tau_{\alpha}^2:D^2 \to D^2$ lifts to a map on $E^2$ if the connected components of the $G_{\{p,q\}}$ are either isolated vertices or isomorphic to $L_1$. 
 \end{enumerate}
 
 \end{prop}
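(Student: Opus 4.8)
The plan is to reduce Proposition \ref{prop:lifting-alpha-twist-conditions} to Corollary \ref{cor:lifting-conditions} by a change-of-generators argument, exactly as the paragraphs preceding the statement suggest. The point is that whether $\tau_\alpha^k$ lifts depends only on the branch covering restricted to a regular neighborhood $N_\alpha$ of $\alpha$, and this restricted covering is encoded by a $2$-colored graph built from the monodromy images of two loops $\sigma_p$, $\sigma_q$ encircling $p$ and $q$ and running along $\gamma_p$, $\gamma_q$.

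First I would fix a regular neighborhood $N_\alpha \subset D^2$ of $\alpha$ that also contains $\gamma_p$ and $\gamma_q$ but no branching point other than $p$ and $q$, and set $M = \phi^{-1}(N_\alpha)$; then $\tau_\alpha$ is supported in $N_\alpha$, so $\tau_\alpha^k$ lifts under $\phi$ if and only if it lifts under $\phi|_M \colon M \to N_\alpha$ (a lift upstairs restricts to $M$ since $\tau_\alpha^k$ is the identity outside $N_\alpha$; conversely a lift on $M$ extends by the identity on $\phi^{-1}(D^2 - N_\alpha)$, matching on the overlap). Second, I would observe that, because $\gamma_p$, $\gamma_q$, and $\alpha$ cobound a disk in $D^2$ with no branching point inside, the loops $\sigma_p$ and $\sigma_q$ form a standard pair of generators for $\pi_1(N_\alpha - \{p,q\})$ of the type appearing in Fig. \ref{fig:disk-generators} for the $2$-punctured disk; hence the graph representation of $\phi|_M$ with respect to these generators is precisely $G_{\{p,q\}}$, whose edges are $G_{\rho(\sigma_p)} \cup G_{\rho(\sigma_q)}$ as in Definition \ref{def:graph-and-permuations}. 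This is the analogue, for an arbitrary arc, of the identification in the proof of Corollary \ref{cor:lifting-conditions} of the $\{1,2\}$-subgraph of $G$ with the graph of $\phi|_{N(\alpha)}$ when $\alpha$ is one of the $\alpha_i$.

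Third, with $\phi|_M$ now presented by the $2$-colored graph $G_{\{p,q\}}$, I would run $G_{\{p,q\}}$ through Proposition \ref{prop:lifting-of-tk-and-degree}: each connected component $C$ of $M$ over the corresponding component of $N_\alpha$ is, since $G_{\{p,q\}}$ restricted to that component is a $2$-colored tree, either a trivial (degree $1$) covering — component an isolated vertex — or an $L_{(m,2)}$-covering where $m = \mathrm{degree}(\beta_i)$ equals the number of vertices minus one in that component. If every component of $G_{\{p,q\}}$ is an isolated vertex or a copy of $L_2$ (two edges, so $m = 2$), then every $\mathrm{degree}(\beta_i) \in \{1,2\}$, so $3$ is a common multiple of all $\mathrm{degree}(\beta_i)+1$, and $\tau_\alpha^3$ lifts; if every component is an isolated vertex or a copy of $L_1$ ($m = 1$), then every $\mathrm{degree}(\beta_i) \in \{1,2\}$ again but now with the $L_1$ components having $\mathrm{degree} = 1$, wait — rather, the relevant common multiple is $2$ since $\mathrm{degree}(\beta_i)+1 \in \{2\}$ for the $L_1$ pieces and $= 2$ trivially for isolated vertices, so $\tau_\alpha^2$ lifts. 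Both parts then follow. (Only the ``if'' direction is asserted, so I need not worry about the converse, though it too follows from Proposition \ref{prop:lifting-of-tk-and-degree}.)

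The main obstacle I anticipate is the second step: verifying carefully that $\sigma_p$ and $\sigma_q$, as constructed from $\gamma_p$ and $\gamma_q$, really are a \emph{standard} generating pair for $\pi_1(N_\alpha - \{p,q\})$ — i.e. that the triangular/disk region they cobound with $\alpha$ lets one isotope them to the model generators of Fig. \ref{fig:disk-generators} — and that this isotopy is compatible with the identification of graph representations, so that no extra conjugation creeps in to change which edges appear in $G_{\{p,q\}}$. This is the same subtlety handled in the proof of Corollary \ref{cor:lifting-conditions} via the contractible triangle $T$ and the connecting arc $\gamma$; here it must be done for a general pair of connecting arcs, and the only thing to check is that conditions (1)–(3) on $\gamma_p,\gamma_q$ are exactly what guarantee the bounding region is an embedded disk free of other punctures, which is precisely what makes the conjugated loops isotopic to a standard pair. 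Once that bookkeeping is in place, the rest is a direct appeal to Proposition \ref{prop:lifting-of-tk-and-degree}.
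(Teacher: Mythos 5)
Your plan — localize to $N_\alpha$, change generators so that $G_{\{p,q\}}$ becomes the graph of $\phi|_M$, and feed the result into Proposition \ref{prop:lifting-of-tk-and-degree} — is exactly what the paper intends; the paper states Proposition \ref{prop:lifting-alpha-twist-conditions} without a separate proof, leaving it as the general-arc version of the argument for Corollary \ref{cor:lifting-conditions}. Your first two steps (localization and the change-of-generators identification of $G_{\{p,q\}}$) are sound and faithfully reproduce what the paper does in the proof of that corollary.

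The third step, however, has an arithmetic gap as written. You claim that if every component of $G_{\{p,q\}}$ is an isolated vertex or a copy of $L_2$, then $\mathrm{degree}(\beta_i) \in \{1,2\}$ and ``$3$ is a common multiple of all $\mathrm{degree}(\beta_i)+1$''; but $3$ is not a multiple of $2$, so taken literally this only shows $\tau_\alpha^6$ lifts. You also contradict yourself on the degree convention: at one point you set $m = \mathrm{degree}(\beta_i)$ to be ``the number of vertices minus one'' (which gives an isolated vertex degree $0$), and a line later you write $\mathrm{degree}(\beta_i) \in \{1,2\}$. The correct bookkeeping is that an isolated vertex corresponds to a component of $M$ on which $\phi$ restricts to a homeomorphism; over such a component every power of $\tau_\alpha$ lifts with no constraint, so it contributes nothing to the LCM. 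The components isomorphic to $L_2$ require $k \equiv 0 \pmod 3$ by Lemma \ref{lem:on-Lm_m-powers-lifts}, so the LCM is $3$; similarly the $L_1$ components require $k$ even and the LCM is $2$. (For what it is worth, the ``$+1$'' in the statement of Proposition \ref{prop:lifting-of-tk-and-degree} sits uneasily with the definition $\mathrm{degree}(\beta_i) = |\phi^{-1}(p)\cap\beta_i|$: that count is the number of sheets of $\phi|_{N(\beta_i)}$, which is $m+1$ when the component is $L_{(m,2)}$, so the lifting condition really is ``$k$ a multiple of $\mathrm{degree}(\beta_i)$.'' You were not wrong to find the convention confusing, but your proof should resolve it consistently rather than inherit it.) Once this accounting is straightened out, your argument closes.
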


\subsection{Branch coverings with liftable cube half-twists}
In this subsection we give a method to construct infinitely many coverings $\phi: D^2 \to D^2$ branched along $n$ points with the property that the triple half-twist around the arcs $\alpha_1, \dots, \alpha_n$ lifts to the covering space; the arcs $\alpha_1, \dots, \alpha_n$ are the straight arcs connecting consecutive points on the base disk (see Figure \ref{fig:arc-alpha-connecting-1-and-2} for a picture of $\alpha_1$).

As we saw on Proposition \ref{prop:lifting-alpha-twist-conditions}, the lifting of $\tau^3_{\alpha_i}$ can be decided by looking at the $\{i,i+1\}$-subgraph of $G$ associated with the covering $\phi$. This can be rephrased as follows:

\begin{prop}
If every  $i$-edge of $G$ is always next to one and only one $i+1$-edge if $i<n$ edge and with one and only one $i-1$-edge if $i >1$, then each $\tau^3_{\alpha_i}$ lift to the covering space associated with $G$.
\end{prop}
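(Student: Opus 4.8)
The plan is to deduce the statement directly from Proposition \ref{prop:lifting-alpha-twist-conditions}. As explained in the discussion preceding it, for the standard arc $\alpha_i$ the generators $\sigma_i,\sigma_{i+1}$ of Figure \ref{fig:disk-generators}, together with $\alpha_i$, bound a triangular disk containing no other branching point; hence the auxiliary graph $G_{\{p_i,p_{i+1}\}}$ built from the permutations $\rho(\sigma_i),\rho(\sigma_{i+1})$ is exactly the $\{i,i+1\}$-subgraph of $G$. So it suffices to show that the colouring hypothesis forces every connected component of the $\{i,i+1\}$-subgraph $H$ to be an isolated vertex or a copy of $L_2$; Proposition \ref{prop:lifting-alpha-twist-conditions}(1) then yields the lift of $\tau^3_{\alpha_i}$.

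First I would record the shape of $H$. Because $G$ is edge-coloured, each vertex of $G$ is incident to at most one $i$-coloured and at most one $(i+1)$-coloured edge, so every vertex of $H$ has valence at most $2$; and $H$, being a subgraph of the tree $G$, is a forest. Therefore each component of $H$ is a path (possibly a single vertex), and along such a path the edge colours alternate between $i$ and $i+1$, since consecutive edges share a vertex.

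Next I would bound the length of these paths using the hypothesis, which says that every $i$-coloured edge is adjacent to exactly one $(i+1)$-coloured edge and, applying it with the colour $i+1$ in place of $i$, that every $(i+1)$-coloured edge is adjacent to exactly one $i$-coloured edge. A component path with three or more edges contains an interior edge adjacent to two edges of the complementary colour, contradicting one of these; a component path with a single edge has that edge adjacent to no edge of the complementary colour, again a contradiction. Hence every nontrivial component has exactly two edges, one of each colour meeting at a common vertex, i.e.\ is isomorphic to $L_2$, and all other components are isolated vertices. Proposition \ref{prop:lifting-alpha-twist-conditions}(1) then completes the argument.

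I expect no real obstacle: the statement is essentially a combinatorial repackaging of Proposition \ref{prop:lifting-alpha-twist-conditions}. The two points worth stating carefully are the identification of $G_{\{p_i,p_{i+1}\}}$ with the $\{i,i+1\}$-subgraph, which relies on the specific shape of the generators in Figure \ref{fig:disk-generators}, and the use of the hypothesis in both directions — for colour $i$ and for colour $i+1$ — which is precisely what rules out a lone $(i+1)$-coloured edge (an $L_1$ component, for which only $\tau^2$, not $\tau^3$, would lift).
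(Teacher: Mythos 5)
Your proof is correct and follows the same approach the paper intends: the paper states this proposition as a direct rephrasing of Corollary \ref{cor:lifting-conditions} and Proposition \ref{prop:lifting-alpha-twist-conditions} and provides no proof, and your argument supplies exactly the routine combinatorial check (valence $\le 2$ plus the two-sided adjacency condition force each component of the $\{i,i+1\}$-subgraph to be an isolated vertex or a copy of $L_2$) needed to invoke that criterion.
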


For example, the graph $L_n$ defined in \ref{def:linear_graph_Lm}  (see Fig  \ref{fig:example-L4} for an example when $m=4$) satisfies this condition for every $i=1, \dots, n$. This means that $\tau^3_{\alpha_i}$ lifts for every $i$. Observe that, in this case,  the lift of $\tau^3_{\alpha_i}$ consists of the composition of one half-twist around one arc components in $\phi^{-1} (\alpha_i)$ and the cubes of half-twist around the other arc components of $\phi^{-1} (\alpha_i)$.

\begin{figure}
    \centering
    \import{images/}{example-L4.pdf_tex}
    \caption{The graph $L_4$ and its correspondent branched covering $\phi_5$. The connected components of $\phi_5^{-1}(\alpha)$ are $\widetilde{\alpha}, \overline{\alpha}_1$ and $\overline{\alpha}_2$. And the degrees are $deg(\widetilde{\alpha})=3$, and $deg(\overline{\alpha}_1) = deg(\overline{\alpha}_2) = 1$. The lift of $\tau^3_{\alpha}$ is the function $\tau_{\widetilde{\alpha}} \circ \tau^3_{\overline{\alpha}_1} \circ \tau^3_{\overline{\alpha}_2}$ }
    \label{fig:example-L4}
\end{figure}

\begin{defn}
An $n$-colored graph $G$ will be called $n$-\emph{consecutive-colored} if it satisfies that:
\begin{enumerate}
    \item Every $i$-colored edge is adjacent with one and only one $i+1$ edge if $i<n$,
    \item and is adjacent with one and only one $i-1$ if $i>1$.
\end{enumerate}
\end{defn}

Another way of putting this definition is by saying that the components of the $\{i,i+1\}$- subgraph of $G$ (the subgraph formed by the $i-$ and $(i+1)-$colored edges of $G$) are either isomorphic to $L_2$ or isolated points for all $i<n$.

The following statement will allow us to construct infinitely many $n$-consecutive-colored graphs. 

\begin{prop}\label{prop:n-consecutive-colored-graph-conditions}
Let $G_1$ and $G_2$ be two $n$-consecutive-colored graphs with marked vertices $v_i \in G_i$ for $i=1,2$ satisfying that no color around $v_1$ is consecutive or equal to a color around $v_2$. Then, the quotient graph $$G = \frac{G_1 \cup G_2 }{v_1 =v_2}$$
is an $n$-consecutive-colored graph as well.
\end{prop}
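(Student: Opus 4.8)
The plan is to verify the $n$-consecutive-colored condition directly at every vertex of $G$ and for every color $i$, using the fact that the gluing identifies $v_1$ with $v_2$ and that the color sets around $v_1$ and $v_2$ are ``far apart'' in the sense of the hypothesis. First I would observe that, as a set of edges, $G$ is the disjoint union of the edges of $G_1$ and $G_2$, and that the only vertex of $G$ whose incident-edge set is not literally an incident-edge set of $G_1$ or of $G_2$ is the amalgamated vertex $w := v_1 = v_2$. At every vertex $u \neq w$, the star of $u$ in $G$ coincides with its star in the graph $G_j$ it came from; hence the consecutive-coloring conditions at $u$ hold in $G$ precisely because they held in $G_j$. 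This reduces the whole claim to checking the conditions at $w$, and to checking that gluing did not create a monochromatic pair of adjacent edges (so that $G$ is still a legitimately edge-colored graph).

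Next I would examine the vertex $w$. Its incident edges are exactly the edges incident to $v_1$ in $G_1$ together with the edges incident to $v_2$ in $G_2$. The hypothesis says no color appearing at $v_1$ equals or is consecutive to any color appearing at $v_2$; in particular the two color sets are disjoint, so $w$ still has all distinct colors on its incident edges and $G$ is edge-colored. Now fix a color $i$ and an $i$-colored edge $e$ incident to $w$. By the disjointness, $e$ lies in exactly one of the two graphs, say $e \subset G_1$, so $i$ is a color around $v_1$. I must show $e$ is consecutive to exactly one $(i+1)$-colored edge (when $i<n$) and exactly one $(i-1)$-colored edge (when $i>1)$, where "consecutive" now means sharing the vertex $w$. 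Since $G_1$ is $n$-consecutive-colored, within $G_1$ the edge $e$ already has exactly one $(i+1)$-colored neighbor and exactly one $(i-1)$-colored neighbor at $v_1$. The only way $G$ could acquire an extra $(i\pm1)$-colored neighbor of $e$ at $w$ is if some edge of $G_2$ incident to $v_2$ were colored $i+1$ or $i-1$; but that would put a color consecutive to $i$ around $v_2$ while $i$ is a color around $v_1$, contradicting the hypothesis. Conversely the existing neighbor in $G_1$ survives in $G$. Hence $e$ has exactly one neighbor of each adjacent color at $w$, as required. The argument for an $i$-colored edge incident to $w$ coming from $G_2$ is symmetric.

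Finally I would note that for an $i$-colored edge $e$ not incident to $w$, both endpoints of $e$ are ordinary vertices of one of the $G_j$ with unchanged stars, so the conditions transfer verbatim. Combining the three cases — edges with no endpoint at $w$, edges with an endpoint at $w$ from $G_1$, edges with an endpoint at $w$ from $G_2$ — establishes that every $i$-colored edge of $G$ is consecutive with exactly one $(i+1)$-colored edge (for $i<n$) and exactly one $(i-1)$-colored edge (for $i>1$), which is the definition of $n$-consecutive-colored. I do not expect a genuine obstacle here; the only point requiring care is the bookkeeping at $w$, namely making sure the ``no consecutive or equal colors'' hypothesis is exactly what forbids the two gluing pieces from interfering with each other's consecutive-color counts, and that $G$ remains properly edge-colored after the identification. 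One should also remark (or assume, since the paper works with trees elsewhere) that if $G_1$ and $G_2$ are trees then so is $G$, since amalgamating two trees along a single vertex yields a tree; this is needed if one wants the associated covering surface to be a disk, though it is not part of the stated conclusion.
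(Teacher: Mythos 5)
Your proposal is correct and follows essentially the same approach as the paper: reduce the verification to the amalgamated vertex $w = v_1 = v_2$ (since all other stars are unchanged), then use the hypothesis that the color sets around $v_1$ and $v_2$ are neither equal nor consecutive to rule out both monochromatic adjacencies and any new consecutive-color pair at $w$. Your write-up is considerably more explicit than the paper's terse proof, but there is no new idea or different strategy involved.
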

\begin{proof}
We only need to observe that around the vertex $v_1 = v_2$, we don't break the condition of $n$-consecutive-colorability. It is obvious that all $i$-edges keep together with at least one $(i+1)$-edge, but now they could be connected with one extra $(i+1)$, but that can only happen on the vertex $v_1 = v_2$. But the condition on them guarantees that no new pair of connected consecutive colored edges are created.
\end{proof}

Now, by taking copies of $L_n$ and identifying them along vertices with no consecutive colors or the same color around them, we can create many $n$-consecutive-colored graphs like the one shown in Figure \ref{fig:n-consecutive-colored-graph-example}.

\begin{figure}
    \centering
    \import{images/}{n-consecutive-colored-graph-example.pdf_tex}
    \caption{A $5$-consecutive-colored graph made of 4 copies of $L_5$}
    \label{fig:n-consecutive-colored-graph-example}
\end{figure}
 
From the example shown in Figure \ref{fig:n-consecutive-colored-graph-example}, we can see that it is possible to create trees for any $n \geq 3$. It is clear that when $n=2$, there is only one connected tree. When  $n=3$, there are infinitely many threes, but they are all line graphs made of $L_3$'s. When $n \geq 4$, we can find a much bigger variety of trees. This kind of tree is the one that we will use in the following section.

\subsection{Symmetric graphs}
Colored graphs have a special kind of symmetry.  For example, we can observe that if we reflect the graph $G$ shown Figure \ref{fig:color-symmetric-graph-example} along a vertical line passing through the middle, we will get a transformation from $G$ onto $G$. Unfortunately, this transformation does not preserve the coloring. Luckily, it maps edges of the same color to edges of the same color (for instance, the two edges of color 4 are mapped to edges of color 1). We will say that $G$ is color-symmetric; the formal definition is as follows.

\begin{figure}
    \centering
\begingroup%
  \makeatletter%
  \providecommand\color[2][]{%
    \errmessage{(Inkscape) Color is used for the text in Inkscape, but the package 'color.sty' is not loaded}%
    \renewcommand\color[2][]{}%
  }%
  \providecommand\transparent[1]{%
    \errmessage{(Inkscape) Transparency is used (non-zero) for the text in Inkscape, but the package 'transparent.sty' is not loaded}%
    \renewcommand\transparent[1]{}%
  }%
  \providecommand\rotatebox[2]{#2}%
  \newcommand*\fsize{\dimexpr\f@size pt\relax}%
  \newcommand*\lineheight[1]{\fontsize{\fsize}{#1\fsize}\selectfont}%
  \ifx\svgwidth\undefined%
    \setlength{\unitlength}{186.78563636bp}%
    \ifx\svgscale\undefined%
      \relax%
    \else%
      \setlength{\unitlength}{\unitlength * \real{\svgscale}}%
    \fi%
  \else%
    \setlength{\unitlength}{\svgwidth}%
  \fi%
  \global\let\svgwidth\undefined%
  \global\let\svgscale\undefined%
  \makeatother%
  \begin{picture}(1,0.2697177)%
    \lineheight{1}%
    \setlength\tabcolsep{0pt}%
    \put(0,0){\includegraphics[width=\unitlength,page=1]{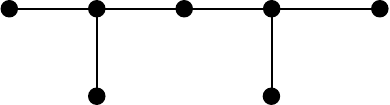}}%
    \put(0.1397035,0.19255617){\color[rgb]{0,0,0}\makebox(0,0)[t]{\lineheight{1.25}\smash{\begin{tabular}[t]{c}$1$\end{tabular}}}}%
    \put(0.37541207,0.19255617){\color[rgb]{0,0,0}\makebox(0,0)[t]{\lineheight{1.25}\smash{\begin{tabular}[t]{c}$2$\end{tabular}}}}%
    \put(0.25492881,0.10216194){\color[rgb]{0,0,0}\makebox(0,0)[lt]{\lineheight{1.25}\smash{\begin{tabular}[t]{l}$4$\end{tabular}}}}%
    \put(0.59133237,0.19255617){\color[rgb]{0,0,0}\makebox(0,0)[t]{\lineheight{1.25}\smash{\begin{tabular}[t]{c}$3$\end{tabular}}}}%
    \put(0.83368434,0.19255617){\color[rgb]{0,0,0}\makebox(0,0)[t]{\lineheight{1.25}\smash{\begin{tabular}[t]{c}$4$\end{tabular}}}}%
    \put(0.70521575,0.10216194){\color[rgb]{0,0,0}\makebox(0,0)[lt]{\lineheight{1.25}\smash{\begin{tabular}[t]{l}$1$\end{tabular}}}}%
  \end{picture}%
\endgroup%

    \caption{An example of a color-symmetric graph}
    \label{fig:color-symmetric-graph-example}
\end{figure}{}

\begin{defn}
An $n$-colored-graph $G$ is going to be \emph{color-symmetric} if:
\begin{enumerate}
    \item There is a permutation $s: V(G) \to V(G)$ with $s(s(v)) = v$, and such that $e =[a,b]$ is an edge of $G$ if and only if $[s(a), s(b)]$ is an edge of $G$ as well. We write $s(e)=[s(a), s(b)]$.
    \item There is a permutation of colors $f: \{1, 2, \dots, n\} \to \{1, 2, \dots, n\}$ such that $color(s(e)) = f(color(e))$.
\end{enumerate}
We will refer to $s$ as a \emph{reflection} of $G$ and, to $f$, as a \emph{color-reflection} of $G$. 
\end{defn}

A simpler example is the graph $L_m$ which is color-symmetric with color-reflection $f(i) = m+1-i$.

\begin{prop}
If $H$ is an $n$-colored graph with a symmetry $s$ and color-reflection $f$. Take $v_0$ a vertex on $H$, and define $G$ as the quotient
$$G = \frac{H \cup H}{v_0 \sim s(v_0)}$$

Then $G$ is color-symmetric with $f$ as a color-reflection.
\end{prop}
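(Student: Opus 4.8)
The plan is to exhibit the reflection of $G$ explicitly and then check the two conditions in the definition of colour-symmetry directly; nothing here is deep, so the proof is really a matter of careful bookkeeping on the quotient. Denote by $H_1$ and $H_2$ the two copies of $H$ out of which $G$ is built, so that a vertex of $G$ is an equivalence class $[(v,1)]$ or $[(w,2)]$ with $v,w\in V(H)$, the only identification being $[(v_0,1)]=[(s(v_0),2)]$. Since the two copies are joined along a single vertex, no new edges are produced, $E(G)=E(H_1)\sqcup E(H_2)$, and every edge keeps the colour it carried in $H$. Define $\sigma\colon V(G)\to V(G)$ by $\sigma([(v,1)])=[(s(v),2)]$ and $\sigma([(w,2)])=[(s(w),1)]$; this is the candidate reflection.

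First I would check that $\sigma$ is well defined on the quotient and is an involution. The only relation to worry about is $[(v_0,1)]=[(s(v_0),2)]$: here $\sigma([(v_0,1)])=[(s(v_0),2)]$ and $\sigma([(s(v_0),2)])=[(s^2(v_0),1)]=[(v_0,1)]$, and $(v_0,1)$ and $(s(v_0),2)$ are exactly the two representatives of this single class, so $\sigma$ sends the class to itself; in particular $\sigma$ fixes the central vertex. Involutivity, $\sigma\circ\sigma=\mathrm{id}$, is then immediate from $s\circ s=\mathrm{id}$. Next I would verify that $\sigma$ preserves edges: an edge of $G$ lies in exactly one copy, and if $e\in E(H_1)$ comes from an edge $[a,b]\in E(H)$, then $[s(a),s(b)]\in E(H)$ because $s$ is a symmetry of $H$, so $\sigma$ sends $e$ to the corresponding edge of $H_2$; the case $e\in E(H_2)$ is symmetric. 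Because $\sigma$ is an involution this also gives the converse implication, which is precisely condition~(1) of colour-symmetry, with $\sigma$ in the role of the reflection.

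For condition~(2) I would compute colours directly. For $e\in E(H_1)$ coming from $[a,b]\in E(H)$ we have $\operatorname{color}_G(\sigma(e))=\operatorname{color}_H([s(a),s(b)])=f(\operatorname{color}_H([a,b]))=f(\operatorname{color}_G(e))$, the middle equality being exactly the hypothesis that $f$ is a colour-reflection of $H$; for $e\in E(H_2)$ the computation is word-for-word the same. This shows $G$ is colour-symmetric with reflection $\sigma$ and colour-reflection $f$, as claimed.

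I do not expect a genuine obstacle, since the argument is entirely routine; the one point worth a sentence of care is that ``$G$ is an $n$-colored graph'' in the first place requires that no two edges of the same colour be consecutive at the identified vertex. The edges incident to that vertex are those around $v_0$ in $H_1$ together with those around $s(v_0)$ in $H_2$, and the colours of the second family are the $f$-images of the colours of the first (because $s$ carries edges at $v_0$ to edges at $s(v_0)$ and shifts colours by $f$). So one needs the set of colours at $v_0$ to be disjoint from its $f$-image — automatic when $v_0$ is, e.g., a leaf of a linear $H$, and in any case easily arranged in the situations where the proposition is applied — and under that mild proviso the verification above goes through unchanged.
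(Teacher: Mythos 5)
Your proof is correct and takes essentially the same route as the paper's: both define the reflection of $G$ by sending a vertex $v$ in one copy of $H$ to $s(v)$ in the other copy, note that this descends to the quotient because $s$ is an involution, and then check that edges and colors are carried as required. Your closing observation — that for $G$ to be a genuine $n$-colored graph one needs the colors around $v_0$ to be disjoint from their $f$-images at $s(v_0)$ — is a valid proviso that the paper leaves implicit; in the paper's actual applications (Prop.~\ref{prop:properties-of-G_m}) the identifications satisfy $|i-j|\geq 3$, which guarantees this condition holds.
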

\begin{proof}
Let $s': V(H \cup H ) \to V(H\cup H)$ be the function that sends a vertex $v$ in any copy of $H$ to $s(v)$ on the other copy. Thanks to the fact that $s$ has order two, it is clear that $s'$ descends to the quotient, it fixes the vertex $v_0$, and it is a symmetry for $G = H \cup H / (v_0 \sim s(v_0))$.  Like no new edges are created, the function $s'$ sends an edge $e$ in one copy of $H$ to an edge $s(e)$ on the other, this implies that $G$ is also color-symmetric with respect to $f$.
\end{proof}

The following more general result can be proven using the same type of arguments as above.
\begin{prop}\label{prop:creating-symmetrical-graphs-by-gluing}
Let $G$ and $H$ be two color-symmetric $n$-colored graphs with symmetry $s$ and $s'$, respectively, and both with the same color-reflection $f$. Now take two vertices $v \in H$ and $w \in G$  and construct the graph
$$K = \frac{(H\times 0) \cup (G \times 1)\cup (H\times 2)}{(v,0) \sim (w,1), (s(w),1) \sim (s'(v),2)}$$

Then $K$ is color-symmetric with reflection $r$ and color-reflection $f$, where 
\begin{equation*}
r(x,i) = \left\{
\begin{array}{ll}
     (s'(x),2)& \textrm{ if } \ i=0 \\
     (s(x),1)& \textrm{ if } \ i=1\\
     (s'(x),0)& \textrm{ if }\  i=2
\end{array}
\right.    
\end{equation*}{}

\end{prop}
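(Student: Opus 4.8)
The plan is to verify directly that the map $r$ defined in the statement is a reflection of $K$ with color-reflection $f$, by checking the two defining conditions in the definition of color-symmetric graph. First I would check that $r$ is a well-defined involution of $V(K)$. It is built piecewise from $s$ and $s'$ on the three copies, so the only thing to verify is that $r$ respects the identifications $(v,0)\sim(w,1)$ and $(s(w),1)\sim(s'(v),2)$ used in the quotient. Concretely, $r(v,0) = (s'(v),2)$ and $r(w,1) = (s(w),1)$; the first identification demands these agree, i.e. $(s'(v),2) \sim (s(w),1)$, which is exactly the second identification in the definition of $K$. Similarly one checks $r$ applied to the second identified pair lands on the first identified pair. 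Then $r\circ r = \mathrm{id}$ follows from $s\circ s = \mathrm{id}$ and $s'\circ s' = \mathrm{id}$ on each copy, again using that the piecewise definition is consistent across the glued vertices.

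Next I would check condition (1): $e=[a,b]$ is an edge of $K$ iff $[r(a),r(b)]$ is. Since no new edges are created by the quotient (the only effect of forming $K$ is identifying vertices), every edge of $K$ lies entirely in one of the three copies of $H$ or $G$. On the copy $H\times 0$, $r$ acts as $x\mapsto (s'(x),2)$... wait, here one must be careful: $r$ sends $H\times 0$ to $H\times 2$, and an edge of $H$ must be carried to an edge of $H$. But $r$ on $H\times 0$ is $s'$, the symmetry of $G$, not of $H$ — so I would instead note that the intended map on $H\times 0$ should be $s$ composed appropriately, OR (reading the statement as given) treat the copies matched by $r$ as $H\times 0 \leftrightarrow H\times 2$ via the symmetry $s$ of $H$, and $G\times 1$ to itself via $s'$. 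I would adopt the reading that makes the edge-correspondence work: $r$ restricted to each copy is the symmetry native to that copy ($s$ on the two $H$'s, $s'$ on $G$), which sends edges to edges within that copy by hypothesis, hence edges of $K$ to edges of $K$.

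Then condition (2): $\mathrm{color}(r(e)) = f(\mathrm{color}(e))$. For an edge $e$ in $H\times 0$ or $H\times 2$, $r(e) = s(e)$ (in the other $H$-copy), and $\mathrm{color}(s(e)) = f(\mathrm{color}(e))$ by the color-symmetry of $H$. For an edge $e$ in $G\times 1$, $r(e) = s'(e)$ and $\mathrm{color}(s'(e)) = f(\mathrm{color}(e))$ by the color-symmetry of $G$. Since $H$ and $G$ are assumed to have the same color-reflection $f$, these are consistent, and so $f$ serves as the color-reflection of $K$. Finally I would remark that $K$ is still a legitimate edge-colored graph — consecutive edges get distinct colors — because this was true in each copy and no two edges from different copies become consecutive except possibly at the two identified vertices, where the hypotheses on $v$, $w$ (inherited via the gluing pattern, analogous to Proposition \ref{prop:n-consecutive-colored-graph-conditions}) must be invoked; I would state this compatibility assumption explicitly if it is needed and otherwise note it follows from the construction.

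The main obstacle I anticipate is purely bookkeeping: making sure the piecewise definition of $r$ is genuinely well-defined across the two quotient identifications, and resolving the apparent mismatch between which symmetry ($s$ or $s'$) acts on which copy so that edges and colors are transported correctly. Once the indexing conventions are pinned down so that $r$ swaps the two $H$-copies via $s$ and preserves the $G$-copy via $s'$, both conditions (1) and (2) reduce immediately to the hypotheses on $H$ and $G$, and the proof is a short verification mirroring the previous proposition.
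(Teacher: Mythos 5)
Your proof is correct and follows exactly the route the paper intends; the paper itself offers no explicit argument, saying only that the claim ``can be proved using the same type of arguments as above,'' and your verification (well-definedness of $r$ across the two identifications via the involutivity of $s$ and $s'$, then checking conditions (1) and (2) copy by copy) is that argument. One small caution: your mid-proof worry that ``$r$ on $H\times 0$ is $s'$, the symmetry of $G$, not of $H$'' stems from a misreading of the hypotheses --- the statement assigns $s$ to $G$ and $s'$ to $H$ (``symmetry $s$ and $s'$, respectively''), so $r$ already restricts to the native symmetry on each copy ($s'$ on both $H$-copies, $s$ on $G$), and the ``corrected reading'' you adopt is not a correction but the statement as written; that digression can simply be deleted. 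Your closing remark that one should check consecutive edges at the glued vertices still receive distinct colors is a fair hygiene observation, though the proposition as stated is only about the reflection $r$ and its color-reflection $f$ and does not itself assert that $K$ is a valid $n$-coloring; in the paper's application that condition is ensured separately by Proposition~\ref{prop:n-consecutive-colored-graph-conditions}.
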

\begin{proof}
 First of all, we have to show that $r$ is well defined and it is a symmetry of $K$. Let $q$ be the quotient function that maps $\hat{K} = (H\times 0) \cup (G \times 1)\cup (H\times 2)$ onto $K$.  Let $\rho$ be the permutation of the vertices $V(\hat{K})$ given by  
 \begin{equation*}
\rho(x,i) = \left\{
\begin{array}{ll}
     (s'(x),2)& \textrm{ if } \ i=0 \\
     (s(x),1)& \textrm{ if } \ i=1\\
     (s'(x),0)& \textrm{ if }\  i=2
\end{array}
\right.    
\end{equation*}{}

Observe that $q \circ \rho(v, 0) = q \circ \rho(w,1) $ and  $q \circ \rho(s(w), 1) = q \circ \rho(s'(v),2) $, so $\rho$ descent as $r$ to the quotient space $V(K)$, this means that $q \circ \rho =  r \circ q$.  Clearly, $\rho \circ \rho = I_{\hat{K}}$ (the identity on  $V(\hat{K})$), so $r \circ r = I_{V(K)}$.  As for the edges, we observe that $[x,y]$ is an edge of $\hat{K}$ if and only if $[\rho(x), \rho(y)]$  is an edge too; this is because $\rho$ restricted to a copy of $H$ ($H\times i$, with $i =0, 2$) behaves as graph isomorphisms between $H \times 0$ and $H \times 2$, and restricted to $G \times 1$ is a reflection. Then, it follows that $[q(x), q(y)]$ is an edge of $K$ is equivalent to $[r(x), r(y)]$ is an edge of $K$. So, $r$ is a symmetry of $K$.

Finally, to check that $r$ satisfy the relation $f \circ color(e) = color ( r(e) ) $ for every edge in $K$ it is enough to check that  $f \circ color(e) = color ( \rho(e) ) $ for every edge $e$ in $\hat{K}$. To prove the identity, we can proceed by cases. For example, for $e = [(x,0), (y,0)]$ in $H \times 0$ the color of $\rho(e) = [(s'(x), 2), s'(y),2]$ is the color of $[s'(x), s'(y)] = s'([x,y])$ in $H$, so it satisfy  that $color (\rho(e)) = color(s'([x,y])) = f(color[x,y]) = f (color(e))$. We can solve the other cases analogously.

\end{proof}

\subsection{The main disk branch covering: the graph $G_m$}

In this section, we describe the cornerstone of the construction of a universal transverse knot. This is a branch covering $\phi: D^2 \to D^2$ with a set of strategically selected arcs on both disks (the base and covering disk spaces) that will allow us to construct any braid on the above disk by means of twisting around the arcs on the base disk.

As we explained in the previous sections, we can describe disk coverings by means of an $m$-colored tree. The tree that we are going to use is constructed by taking copies of the linear $m$-colored graph $L_{m}$ (see Definition \ref{def:linear_graph_Lm}) and gluing them as follows:

\begin{defn}\label{def:graph_gm}Given any integer $m \geq 8$, we denote as $G_m$ the graph constructed by taking $L^1, L^2, \dots , L^{4m+2}$, $4m+2$ copies of the linear graph $L_{2m}$, and identifying them as follow:
\begin{enumerate}
    \item[I1:] For $i=1, 2, \dots, m-1$ identify $v_{i-1} \in L^i$ with $v_{i+2}  \in L^{i+1}$.
    \item[I2:] For $i=m+1, m+2, \dots, 2m-1$ identify $v_{i-2} \in L^i$ with $v_{i+1}  \in L^{i+1}$.
    \item[I3:] Identify $v_2 \in L^1$ with $v_{2m-2}\in L^{2m}$.
    \item[I4:] For $i =1, 2, \dots, m$ identify $v_i \in L^i$ with $v_{2m-5} \in L^{2m+i}$
    \item[I5:] For $i =m+1, m+2, \dots, 2m$ identify $v_{i-1} \in L^i$ with $v_{5} \in L^{2m+i}$
    \item[I6:] Identify $v_0 \in L^1$ with $v_6 \in L^{4m+1}$ and $v_{2m-6}  \in L^{4m+2}$ with $v_{2m} \in L^{2m}$
\end{enumerate}
\end{defn}

\begin{prop}\label{prop:properties-of-G_m}
The graph $G_m$ satisfies the following properties:
\begin{enumerate}
    \item It is a connected tree.
    \item It is a $2m$-colored graph 
    \item It is symmetric with color-reflection $f(i) = 2m+1 -i$.
    \item It has the $2m$-consecutive-coloring property.

\end{enumerate}
\end{prop}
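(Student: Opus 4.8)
The plan is to verify the four properties of $G_m$ more or less in the order listed, since each one depends on understanding the gluing pattern in Definition \ref{def:graph_gm}, and the earlier properties feed into the later ones. Throughout, I would keep in mind that each $L^j$ is a copy of $L_{2m}$, a linear $2m$-colored graph on $2m+1$ vertices $v_0,\dots,v_{2m}$ with edge $[v_k,v_{k+1}]$ colored $k$ (so colors run $1$ through $2m$ along the line), and that all the identifications I1--I6 are vertex identifications only — no edges are ever merged or created. Hence $G_m$ has exactly $(4m+2)\cdot 2m$ edges, and the edge count will be the backbone of the tree argument.

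First, for connectedness and the tree property (item 1): I would argue connectedness by following the identifications. Identifications I1 link $L^1,\dots,L^m$ into one connected piece; I2 links $L^{m+1},\dots,L^{2m}$ into another; I3 joins these two chains via $L^1$ and $L^{2m}$; I4 and I5 attach each of the "extra" copies $L^{2m+1},\dots,L^{4m}$ to the already-connected main body (I4 for indices $1\le i\le m$, I5 for $m+1\le i\le 2m$); and I6 attaches $L^{4m+1}$ and $L^{4m+2}$. So $G_m$ is connected. To get that it is a tree, I would count: a connected graph is a tree iff $\#\text{edges} = \#\text{vertices} - 1$. The total number of edges is $2m(4m+2)$. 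For vertices, start with $(4m+2)(2m+1)$ vertices in the disjoint union and subtract one for each identification performed, being careful that the identifications are pairwise "independent" in the sense that no vertex is named by two different rules (this needs a quick check that the vertex indices appearing in I1--I6 within any fixed $L^j$ are distinct — e.g. in $L^1$ the rules touch $v_0$ (I1, I6), $v_2$ (I1 with $L^2$, I3), which one must confirm are genuinely different vertices and that the resulting identification classes have size exactly $2$). Counting the identifications: I1 gives $m-1$, I2 gives $m-1$, I3 gives $1$, I4 gives $m$, I5 gives $m$, I6 gives $2$, for a total of $4m+1$. Then $\#\text{vertices} = (4m+2)(2m+1) - (4m+1)$, and one checks $(4m+2)(2m+1)-(4m+1) - 1 = 2m(4m+2)$, i.e. $8m^2+6m+2-4m-2 = 8m^2+2m = 2m(4m+2)$, which holds. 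So $G_m$ is a tree. (I would double-check the edge-count identity symbolically; this is the one place a routine miscount would sink the argument.)

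Item 2, that $G_m$ is $2m$-edge-colored, requires checking that consecutive edges get distinct colors — this can only fail at an identified vertex, since within each $L^j$ the coloring is already proper. At a vertex of $G_m$ that is the image of some $v_k\in L^j$, the colors of the incident edges are among $\{k-1,k\}$ (the colors of $[v_{k-1},v_k]$ and $[v_k,v_{k+1}]$ in $L^j$, whichever exist). So I must verify, for each identification rule, that the two vertices being glued contribute disjoint sets of incident colors. For I1, $v_{i-1}\in L^i$ carries colors $\{i-2,i-1\}$ while $v_{i+2}\in L^{i+1}$ carries colors $\{i+1,i+2\}$ — disjoint. For I2, $v_{i-2}\in L^i$ carries $\{i-3,i-2\}$ and $v_{i+1}\in L^{i+1}$ carries $\{i,i+1\}$ — disjoint. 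I would run the same two-line check for I3, I4, I5, I6 (e.g. I4: $v_i\in L^i$ has colors $\{i-1,i\}$, $v_{2m-5}\in L^{2m+i}$ has colors $\{2m-6,2m-5\}$; these are disjoint precisely when $m\ge 8$, which is exactly why that hypothesis is imposed — here is where the bound $m\ge 8$ earns its keep, and I expect the analogous inequalities in I5 and I6 to be the tightest constraints). This simultaneously establishes item 4, the $2m$-consecutive-coloring property: within each $L^j$ every $i$-colored edge is adjacent to exactly one $(i+1)$-edge and one $(i-1)$-edge (for $1<i<2m$), and the identifications never place two consecutive-colored edges at a common vertex (that is literally the disjointness just checked) — by Proposition \ref{prop:n-consecutive-colored-graph-conditions}, gluing along such vertices preserves consecutive-colorability, so an induction on the number of copies glued gives item 4 for all of $G_m$.

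For item 3, color-symmetry with color-reflection $f(i)=2m+1-i$: each $L^j\cong L_{2m}$ is color-symmetric with this same $f$ (the reflection $v_k\mapsto v_{2m-k}$ sends the $k$-colored edge to the $(2m-k)$-colored edge, and $2m-k = 2m+1-(k+1)$ matches $f$ applied to the color $k+1$... so I should be careful about the exact index bookkeeping, but the reflection is the obvious one). The idea is to build the global reflection $s$ of $G_m$ by sending copy $L^j$ to an appropriately chosen partner copy, each via the linear reflection, and to check this is compatible with the identifications. The natural guess, given the structure, is $L^i \leftrightarrow L^{?}$ pairing the first chain ($1,\dots,m$) with the second ($m+1,\dots,2m$) in reverse, $L^{2m+i}\leftrightarrow L^{2m+?}$ similarly, and $L^{4m+1}\leftrightarrow L^{4m+2}$; one reads off the exact pairing from I3 and I6, which are the "axis" identifications fixed by $s$. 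The main obstacle — and the step I'd budget the most care for — is confirming that this prescribed $s$ respects all of I1--I6: i.e. that the vertex $s$ sends an identified pair to is itself an identified pair, with matching indices after applying the linear reflection $v_k\mapsto v_{2m-k}$ within each copy. This is a finite but fiddly index chase: for instance I1 says $v_{i-1}\in L^i \sim v_{i+2}\in L^{i+1}$, and I need the reflection of this to be one of the I2 (or I1) identifications after relabeling, which forces a specific correspondence between the shifts "$-1,+2$" in I1 and "$-2,+1$" in I2 — reassuringly, those shifts are reverses of each other, which is presumably why the definition was set up that way. I would also double-check the "no color around $v_1$ consecutive or equal to a color around $v_2$" hypothesis is what guarantees, via Propositions \ref{prop:creating-symmetrical-graphs-by-gluing} and \ref{prop:n-consecutive-colored-graph-conditions}, that these properties propagate through the iterated gluing, rather than re-deriving everything by hand. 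Items 3 and the harder parts of the index-matching in items 1--2 are where I expect essentially all the work to be; once the gluing is seen to be "reflection-equivariant and consecutive-color-preserving" the four conclusions fall out.
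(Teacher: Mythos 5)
Your approach matches the paper's for items 2, 3, and 4. For the coloring and consecutive-coloring properties (items 2 and 4 together), the paper makes the same observation you do, phrased as the uniform bound $|i-j|\geq 3$ for every identification of a $v_i$ with a $v_j$, noting that $m\geq 8$ is needed precisely for I4 and I5; as you say, this simultaneously rules out equal colors and consecutive colors at glued vertices, and then Proposition~\ref{prop:n-consecutive-colored-graph-conditions} does the rest. For the symmetry (item 3), the paper does exactly what you sketch: it rebuilds $G_m$ in a different, symmetric order so that Proposition~\ref{prop:creating-symmetrical-graphs-by-gluing} applies at each stage, pairing $L^i$ with $L^{2m+1-i}$, $L^{2m+i}$ with $L^{4m+1-i}$, and $L^{4m+1}$ with $L^{4m+2}$.

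The one genuine divergence is item 1. The paper avoids the Euler count entirely: it observes that in the order I1, I2, I3, I4, I5, I6 each identification attaches a previously unused copy of $L_{2m}$ (or, for I3, the second chain $G^2$) to the already-built graph at a single point, so the result is a wedge of trees at every stage and hence a tree. Your counting route is also correct, and the arithmetic $(4m+2)(2m+1)-(4m+1)-1 = 2m(4m+2)$ checks out, but the "subtract one per identification" step is more delicate than it may look, because some identification classes have size greater than two: for instance $v_0\in L^1$ is glued to $v_3\in L^2$ by I1 and also to $v_6\in L^{4m+1}$ by I6, so that class has three elements. The count still gives the right answer because each identification rule happens to merge two classes that were previously distinct — but establishing that is essentially what the paper's wedge argument proves for free. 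If you keep the counting route, the cleanest way to close the gap without a case check is: connectedness plus $\#\mathrm{edges} = 2m(4m+2)$ gives $\#\mathrm{vertices} \leq 2m(4m+2)+1$, while the identifications can reduce the vertex count by at most $4m+1$, so $\#\mathrm{vertices} \geq (4m+2)(2m+1)-(4m+1) = 2m(4m+2)+1$; equality then forces both that every identification was a genuine merge and that $G_m$ is a tree.
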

\begin{proof}
To show that $G_m$ is connected, we do as follows. First, observe that I1 creates a connected graph $G^1$ with the $m$ copies $L^1, L^2, \dots, L^m$. Now, let $G^2$ be the connected graph created with ($L^{m+1}, L^{m+2}, \dots, L^{2m}$) applying I2-identifications. Then, I3 connects $G^1$ and $G^2$, creating a new graph $G^3$ that contains the first $2m$ copies of $L_{2m}$. The I3- and I4-identifications connect the graphs $L^{2m+1}, \dots , L^{4m}$ with $G^3$. And finally, I5 does the same with $L^{4m+1}$ and $L^{4m+2}$. On all the previous steps, a tree is being identified with another tree by point (a wedge of trees), which is always also a tree. That means that $G$ is a connected tree as claimed.

For the coloring of $G_m$, we use the coloring inherited by $L_{2m}$. To prove that this coloring is an $m$-coloring, we need to check that on all the identifications we made between a vertex $v_i \in L^p$ and a vertex $v_j \in L^q$ they do not have a common color around them. Now, $v_i$ and $v_j$ have a common color around them if and only if $|i-j|  \leq 1$. We can verify that on all identifications made from I1 to I6, we have, in fact, that $|i-j| \geq 3$. Note that the condition $m \geq 8$ needs to be put in place here to prove that $|i-j| \geq 3$ for I4- and I5-identifications. 

The property $|i -j| \geq 3$ implies not only that $v_i$ and $v_j$ have no common colors but also that they don't have consecutive colors in common. And by Proposition \ref{prop:n-consecutive-colored-graph-conditions} follows that $G$ is also a $2m$-consecutive-colored graph.

The symmetry is harder to prove. We need to apply Lemma \ref{prop:creating-symmetrical-graphs-by-gluing} several times. This can be accomplished by constructing $G_m$ following the same identifications but in a different order. The exact order is as follows:

\begin{enumerate}
    \item Use I3-identification first. The resulting graph $\tilde{G}^1$ is clearly $2m$-color-symmetric. The $2m$-reflection $\phi_1$ sends the vertices of $L^1$ to the ones on $L^{2m}$ with $\phi(v_i) = v_{2m-i}$
    \item We proceed to define $\tilde{G}^n$ inductively.
    \begin{itemize}
        \item Let us suppose that we have constructed $\tilde{G}^k$ through identifications of $L^1, L^2$ $, \dots L^k$ and $L^{2m}, \dots, L^{2m+1-k}$ with a reflection $\phi_k$ that maps $L_i$ to $L^{2m+1-i}$ reflecting the vertices and colors. 
        \item We define $\tilde{G}^{k+1}$ as the resulting graph from $\tilde{G}^k$, $L^{k+1}$, and $L^{2m-k}$ after identifying $v_{k-1} \in L^k \subset \tilde{G}^k$ with $v_{k+2} \in L^{k+1}$, and identifying $v_{2m-2-k} \in L^{2m-k}$ with $v_{2m+1-k} \in L^{2m+1-k} \subset \tilde{G}^k$. And we extend $\phi_k$ to $\tilde{G}^{k+1}$ by taking $L^{k+1}$ to $L^{2m-k}$ reflecting the vertices. We call this new function $\phi_{k+1}$
    \end{itemize}
    \item We proceed similarly as above. We now add to $\tilde{G}^n$ the next $2m$ copies of $L_{2m}$ but on pairs of the form $L^{2m+i}$ and $L^{4m+1-i}$ for $i=1, \dots, m$.
    \item Finally, we add the pair $L^{4m+1}$ and $L^{4m+2}$
\end{enumerate}

Identifying this way, on pairs, makes it possible to apply Lemma \ref{prop:creating-symmetrical-graphs-by-gluing} on each step. So, the graph $G_m$ is $2m$-color-symmetric.  As an example, the horizontal reflection along the vertical line passing through the middle of Figure \ref{fig:example-graph-G8} gives us the reflection of $G_8$ with color reflection $i \to (17-i)$.  

\end{proof}

So far, we have proven that graph $G_m$ has a lot of properties.  We are going to need those properties to prove our main theorem. However, we need to prove some other properties. But those properties will be more closely related to the branch covering represented by $G_m$ than to $G_m$ itself. 

\begin{figure}
    \centering
    \includegraphics[scale=0.5, angle=90]{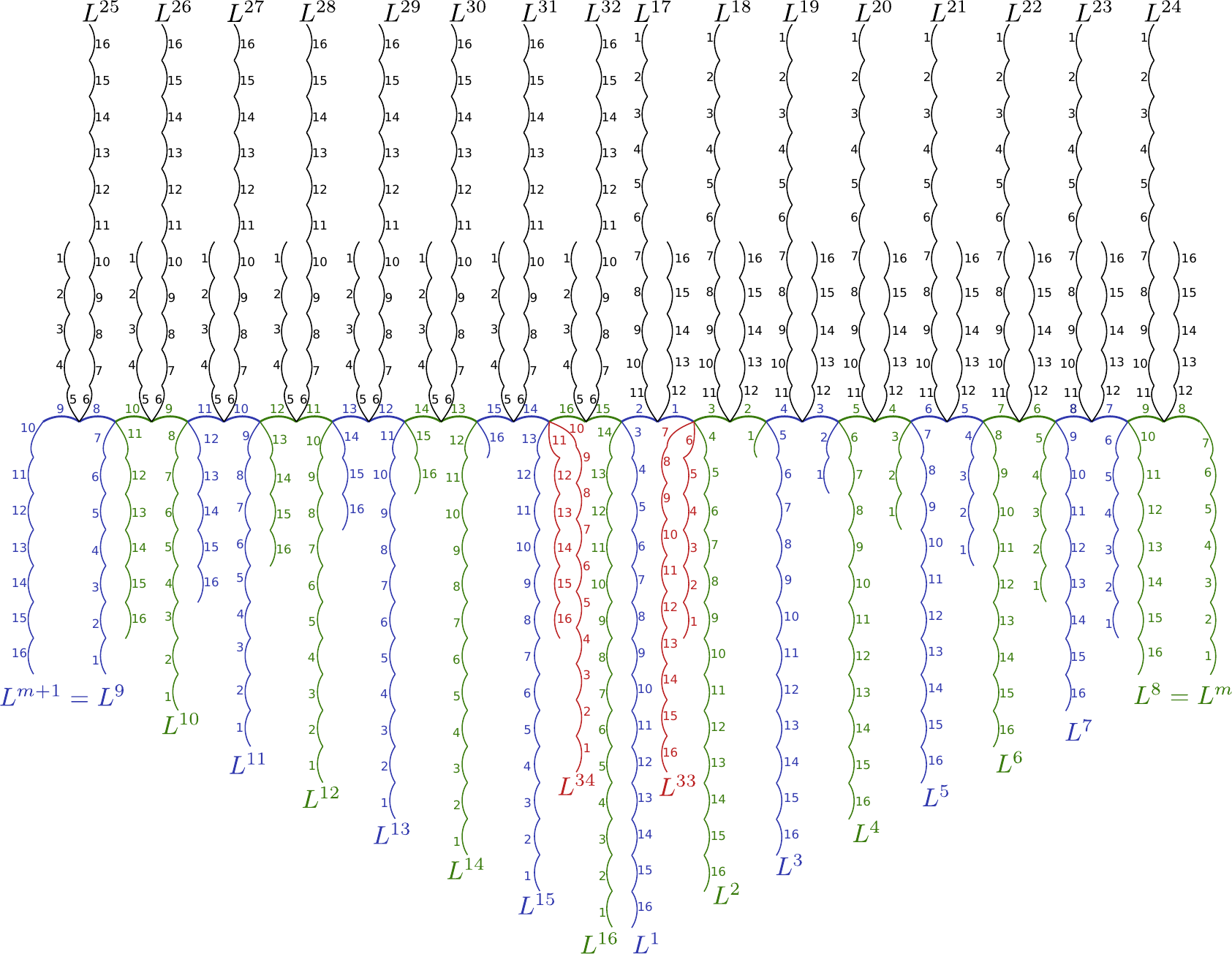}
    \caption{A complete drawing of $G_m$ for $m=8$}
    \label{fig:example-graph-G8}
\end{figure}

\section{Braids in $S^3$ as parts of the preimage of some branching set}\label{sec:constructing-branch-coverings}

In this section, we give a way to construct any given braid as a sub-braid of $\varphi^{-1}(L)$ for some covering map $\varphi:S^3 \to S^3$ branching along some link $L \subset S^3$. More formally, we are going to prove the following theorem:

\begin{thm}
\label{thm:main-theorem-translation-to-openbooks}
Let $L$ be a braid link in $S^3$. Then there is braid knot $K$ in $S^3$ and a covering $\varphi:S^3 \to S^3$ map branched along $K$ such that $\varphi$ maps disk pages into disk pages (of the corresponding standard open books of $S^3$ ) such that a positive stabilization of $L$ is a conjugate of some sublink of $\varphi^{-1}(K)$.   
\end{thm}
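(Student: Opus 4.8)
The plan is to start from the braid $L \subset S^3$, viewed as a closed braid transverse to the pages of the standard open book $(D^2, \mathrm{id})$, and to realize it inside $\varphi^{-1}(K)$ by engineering the branch covering $\varphi$ through the graph machinery of Section \ref{sec:description-with-graphs}. First I would recall that any braid $\beta \in B_\ell$ can be written as a word in the standard Artin generators $\sigma_1^{\pm 1}, \dots, \sigma_{\ell-1}^{\pm 1}$, and that each such generator (up to the identifications set up around Figure \ref{fig:disk-generators}) is, after a change of basis for $\pi_1(D^2 \setminus \{p_1,\dots,p_n\})$, a half-arc-twist $\tau_{\alpha}^{\pm 1}$ around an arc $\alpha$ connecting two branching points. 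By Corollary \ref{cor:lifting-conditions} and Proposition \ref{prop:lifting-alpha-twist-conditions}, $\tau_\alpha^{3}$ (resp.\ $\tau_\alpha^{-3}$) lifts through the covering represented by the graph $G_m$ precisely because $G_m$ has the $2m$-consecutive-coloring property (Proposition \ref{prop:properties-of-G_m}(4)); and the lift of $\tau_\alpha^3$ is a \emph{single} half-arc-twist on one component of $\varphi^{-1}(\alpha)$ together with a cube of a half-arc-twist on the others. This is the key leverage: $\tau_\alpha^3$ downstairs produces $\tau_{\tilde\alpha}$ upstairs.

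Next I would use the branch covering $\phi: D^2 \to D^2$ represented by $G_m$ (for $m$ large enough that the number of strands needed exceeds $\ell$, and $m \ge 8$ so Definition \ref{def:graph_gm} applies) together with the strategically chosen arcs in both disks alluded to at the end of Section \ref{sec:description-with-graphs}. The monodromy $f$ we build downstairs is a product of the cubes $\tau_{\alpha_i}^{\pm 3}$ needed to spell out (a stabilization of) $L$; by the consecutive-coloring property each such cube lifts, so $f$ lifts to $\tilde f \in \mathrm{Aut}(\tilde D^2, \partial \tilde D^2)$, giving an open book branch covering $\phi:(\tilde D^2,\tilde f)\to(D^2,f)$ in the sense of Definition \ref{def:open-book-branch-covering-map}. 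Since $G_m$ is a tree, $\tilde D^2 \cong D^2$ (the Remark after Figure \ref{fig:disk-generators}), so the induced 3-manifold branch covering is $\varphi:(S^3,\xi_{std})\to(S^3,\xi_{std})$, branched over the closed braid $K$ which is the binding-transverse link determined by the branch points $\{p_1,\dots,p_n\}$ of $G_m$. One then reads off that among the lifted twists $\tau_{\tilde\alpha}$ there is a collection of half-arc-twists along arcs in $\varphi^{-1}(K)$ whose product, restricted to the relevant sub-collection of strands, is exactly a braid conjugate to $\beta$ — i.e.\ to $L$ up to conjugation — realized as a sublink of $\varphi^{-1}(K)$; the extra cube-twists on the other preimage components, together with stabilization, account for the discrepancy, which is why the statement only claims a \emph{positive stabilization} of $L$ up to conjugacy.

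The main obstacle — and the part that needs genuine care rather than bookkeeping — is arranging the arcs and the word in the $\tau_{\alpha_i}^{\pm 3}$'s so that the lifts assemble into $L$ rather than into $L$ together with uncontrolled extra braiding. The cube of a half-arc-twist appears on every component of $\varphi^{-1}(\alpha_i)$ except one, so the construction must confine the unwanted $\tau_{\tilde\alpha}^{3}$ contributions to strands outside the sub-braid that carries $L$, or must cancel them in pairs; this is exactly where the specific, elaborate gluing pattern I1–I6 of $G_m$ and the resulting color-symmetry (Proposition \ref{prop:properties-of-G_m}(3)) are used — the symmetry lets one pair up a twist with its mirror so the spurious cubes cancel or get absorbed into a stabilization. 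Verifying that the net effect on the designated strands is conjugate to $\beta$, and that $K$ is connected (a knot, not a link), is the heart of the argument; the rest — that $\varphi$ is a contact covering and that $K$ is transverse — is immediate from Casey's theorem quoted in Section \ref{sec:contact-and-openbooks} and from $K$ being a closed braid.
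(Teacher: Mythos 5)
Your proposal correctly identifies the overall architecture (use $G_m$, exploit the $2m$-consecutive-coloring so that $\tau_{\alpha_i}^3$ lifts, note that the lift contains a single half-arc-twist on one preimage component plus unwanted cubes on the others, and that the residue must be controlled), and you also correctly flag the crux: confining or cancelling those spurious cube-twists. But the mechanism you propose for doing so — invoking the color-symmetry of $G_m$ so that ``a twist and its mirror'' cancel the cubes — is not how the paper does it, and it is not clear that symmetry alone can do it. The color-symmetry of Proposition \ref{prop:properties-of-G_m}(3) is used in the paper only to halve the case analysis in the proof of Proposition \ref{prop:gamma_i-properties} (it lets one treat $i<m$ by reflecting the computation done for $i>m$); it does not pair up spurious cubes in the braid word. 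The actual cancellation device is the family of surgered arcs $\gamma_i$ from Section \ref{sec:arcs-gammai-definition}: one proves (Proposition \ref{prop:gamma_i-properties}) that $\tau_{\gamma_i}^2$ lifts and that its lift is homotopic rel $Q$ to $\overline{\alpha}_i^2$, so that the composite word $\alpha_i^3\cdot\gamma_i^{-2}$ downstairs lifts to $\widetilde{\alpha}_i\cdot\overline{\alpha}_i$ rel $Q$, i.e.\ to a \emph{single} positive twist on each of the two designated preimage components with the cube genuinely gone. Without these $\gamma_i$ arcs there is no obvious way, inside your framework, to eliminate the extra cubes, and this is precisely the new idea the construction requires — so your proposal has a real gap at the central step.

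Two further points you leave unaddressed. First, the paper does not merely choose ``a sub-collection of strands''; it fixes a specific subset $Q\subset\phi^{-1}(P)$ (Definition \ref{def:subset-Q}) split into $Q_0$, $Q_+$, $Q_-$, places $L$ on $Q_0$, and arranges that the residual braiding appearing on $Q_+$ and $Q_-$ is a disjoint union of stabilized unknots $B_1$ and $B_2\cdots B_k$; the final move $\alpha_m^3$ then fuses these onto $L$ by single positive crossings, which is exactly why what one obtains is a \emph{positive stabilization} of $L$ and not $L$ together with garbage. Your account of where the ``positive stabilization'' slack comes from is therefore also not quite right. Second, the branched locus produced by one application of the construction is a braid with one fewer component than $L$, not immediately a knot; the knot $K$ is obtained by iterating the construction $k-1$ times, which your proposal omits.
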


The detailed proof is given in Section \ref{sec:proof-of-main-theorem-translation-to-openbooks}, but we are going to do some preamble first.

We begin by taking the covering $\phi_m:\tilde{D}^2 \to D^2$ branched along a finite set of points $\{p_1, p_2, \dots, p_{2m}\}$, which has $G_m$ as its representation graph for some $m\geq 8$ . 

As explained in Section \ref{sec:description-with-graphs}, we can embed $G_m$ in the covering disk $\tilde{D}^2$ with the vertices of $G_m$ being the preimages of a point $*$ (the base point of $D^2$). Now, given $v \in G_m \subset \tilde{D}^2$ and any branching point $p_i \in D^2$, we define $v(i)$ in $\phi^{-1}(p_i)$ as follows: take $\gamma_i$ the straight line connecting $*$ with $p_i$, and consider $\tilde{\gamma}_i$ the lifting of $\gamma_i$ at $v$, we chose $v(i)$ as the other end of $\tilde{\gamma}_i$ (which is in $\phi^{-1}(p_i)$). Observe that $v(i) = w(j)$ for two distinct vertices $v, w \in G_m$  if and only if $i=j$ and the edge $[v, w]$ is colored with $i$. Also notice that given any vertex $v \in G_m$ the function $\phi$ maps the set $ \{v(1), v(2), \dots v(2m)\}$  one-to-one and onto $\{p_1, p_2, \dots, p_{2m}\}$. As an example of this notation consider Figure \ref{fig:example-L4}, the four leftmost vertices in the covering disk correspond to $\{v_0(1), v_0(2), v_0(3), v_0{4}\}$, the four at the right correspond to $\{v_1(1), v_1(2), v_1(3), v_1(4)\}$, and $v_0(1) = v_1(1)$.

For simplicity, we will denote $v_i \in L^j \subset G_m$ by $v_i^j$. Observe that $v_i^j = v_k^l$ if and only if they correspond to an identification of $v_i \in L^j$ with $v_k \in L^l$ in Definition \ref{def:graph_gm}. Now, we can name all the elements of $\phi^{-1}(p_i)$ as $v_j^k(i)$ for $k=1, \dots, 4m+2$ and $j=0,\dots, 2m$; of course, with many repetitions.

Using the above notation, we define $VH$ as the subset of vertices of $G_m$ listed below:
\begin{enumerate}
    \item $v_{i-2}^i, v_{i-1}^i, v_i^i$ for $i = m+1, m+2, \dots 2m$.
    \item $v_{i-1}^i, v_i^i, v_{i+1}^i$ for $i = 1, 2,\dots, m$
\end{enumerate}

On $VH$, we have 3 vertices from each $L^i$ for $i=1, \dots, 2m$. This makes a total of $6m$ vertices.  But we are over counting; $2m-1$ vertices are duplicated (check I1, I2, and I3 from \ref{def:graph_gm}). Then, the set $VH$ has exactly $4m+1$ vertices. 

Let $H_m$ denote the maximal subgraph of $G_m$ with vertices in $VH$. Observe that no vertex of $H_m$ has valence greater than two, so $H_m$ is a line graph with $4m$ vertices. In Figure \ref{fig:example-graph-G8} is the vertical subgraph pictured in bold.

\begin{defn}\label{def:subset-Q}
We denote by $Q$ the subset of $\phi^{-1}(\{p_1, \dots, p_{2m}\})$ containing the following $4m$ elements:
\begin{enumerate}
    \item $v^i_{i+1}(i)$ and $v^{m+1}_{m+1}(i)$ for $i=1, \dots, m$,
    \item $v^i_{i-2}(i)$ and $v^m_{m-1}(i)$ for $i=m+1, \dots, 2m$,
\end{enumerate}
\end{defn}

The first $2m$ elements of $Q$ are located at the two ends of $H_m$ ($v^{m+1}_{m+1}$ and $v^m_{m-1}$ ). The rest is in the middle of $H_m$.  Observe that $Q \cap \phi^{-1}(p_i)$ contains only two elements for every $i$, those elements are $\{v^i_i(i)$, $v^{m+1}_{m+1}(i)\}$ if $i \leq m$ or $\{v^i_{i-2}(i), v^m_{m-1}(i)\}$ if $i > m$.
 
Now consider the arcs $\alpha_1, \dots, \alpha_{2m-1}$ on $D^2$ connecting consecutive branching points with a straight line (we previously defined these arcs in Section \ref{sec:half-twist-lifting}). Ignore for now the arc $\alpha_{m}$. It is clear that $\phi^{-1}(\alpha_i)$ for every $i \neq m$ has only two connected components intersecting $Q$. This is because the ends of $\alpha_i$ are $p_i$ and $p_{i+1}$, so $\phi^{-1}(\alpha_i)$ can only intersect $Q$ at $\phi^{-1}(p_i) \cap Q$ or $\phi^{-1}(p_{i+1}) \cap Q$. We can easily describe these in two cases, when $i  < m$ and when $i >m$ as follows:
$$
\begin{array}{rl}
\phi^{-1}(p_i) \cap Q &=\left\{ \begin{array}{cc}
     \{v^i_{i+1}(i),\  v^{m+1}_{m+1}(i) \} & \textrm{for } i < m  \\
     \{v^i_{i-2}(i),\  v^m_{m-1}(i) \} & \textrm{for } i > m \\
     \end{array} \right.       \\
     & \\
\phi^{-1}(p_{i+1}) \cap Q &=\left\{ \begin{array}{cc}
     \{v^{i+1}_{i+2}(i+1),\  v^{m+1}_{m+1}(i+1) \} & \textrm{for } i < m  \\
     \{v^{i+1}_{i-1}(i+1),\  v^m_{m-1}(i+1) \} & \textrm{for } i> m \\
     \end{array} \right. 
\end{array}
 $$

For the case $i >m$ the points $v^{m+1}_{m+1}(i)$ and $v^{m+1}_{m+1}(i+1)$ belong to the same vertex $v^{m+1}_{m+1}$, so they are connected by an arc $\overline{\alpha}_i \subset \phi^{-1}(\alpha_i)$.  The arc $\overline{\alpha}_i$ covers 1-1 and onto $\alpha_i$. 

For the other two points $v^i_{i+1}(i)$ and $v^{i+1}_{i+2}(i+1)$, they are on different vertices. So they can not be connected by a 1:1 arc. But, by rule I1 from Definition \ref{def:graph_gm} we know that $v^{i+1}_{i+2} = v^{i}_{i-1}$. Then, by the definition of $L^i_m$, we know that $v^i_{i-1}, v^i_i$, and $v^i_{i+1}$ are consecutive vertices connected by edges of colors $i$ and $i+1$. Which means that  $v^i_{i-1}(i) = v^i_i(i)$ and $v^i_i(i+1) = v^i_{i+1}(i+1)$. So, taking the preimages of $\alpha_i$ contained in the discs corresponding to the vertices of $v^i_{i-1}, v^i_i$, and $v^i_{i+1}$ we get an arc $\widetilde{\alpha}_i$ that has ends in $v^{i+1}_{i+2}(i+1)=v^i_{i-1}(i+1)$ and $v^i_{i+1}(i)$ (see Figure \ref{fig:widetilde_alpha_i}). A similar phenomenon happens with $i > m$. 

In conclusion, there are two components of $\phi^{-1}(\alpha_i)$ having ends on $Q$. Both of them are arcs with both ends in $Q$,  one of the arcs $\widetilde{\alpha_i}$ covers 3:1 onto $\alpha_i$, and the other arc $\overline{\alpha_i}$ covers 1-1 and onto $\alpha_i$. 

\begin{figure}
    \centering
\begingroup%
  \makeatletter%
  \providecommand\color[2][]{%
    \errmessage{(Inkscape) Color is used for the text in Inkscape, but the package 'color.sty' is not loaded}%
    \renewcommand\color[2][]{}%
  }%
  \providecommand\transparent[1]{%
    \errmessage{(Inkscape) Transparency is used (non-zero) for the text in Inkscape, but the package 'transparent.sty' is not loaded}%
    \renewcommand\transparent[1]{}%
  }%
  \providecommand\rotatebox[2]{#2}%
  \newcommand*\fsize{\dimexpr\f@size pt\relax}%
  \newcommand*\lineheight[1]{\fontsize{\fsize}{#1\fsize}\selectfont}%
  \ifx\svgwidth\undefined%
    \setlength{\unitlength}{120.18738033bp}%
    \ifx\svgscale\undefined%
      \relax%
    \else%
      \setlength{\unitlength}{\unitlength * \real{\svgscale}}%
    \fi%
  \else%
    \setlength{\unitlength}{\svgwidth}%
  \fi%
  \global\let\svgwidth\undefined%
  \global\let\svgscale\undefined%
  \makeatother%
  \begin{picture}(1,0.45403607)%
    \lineheight{1}%
    \setlength\tabcolsep{0pt}%
    \put(0,0){\includegraphics[width=\unitlength,page=1]{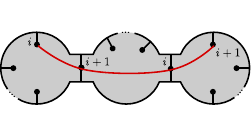}}%
    \put(0.15340097,0.36502144){\makebox(0,0)[t]{\lineheight{1.25}\smash{\begin{tabular}[t]{c}$v^i_{i+1}$\end{tabular}}}}%
    \put(0.513922,0.36502144){\makebox(0,0)[t]{\lineheight{1.25}\smash{\begin{tabular}[t]{c}$v^i_{i}$\end{tabular}}}}%
    \put(0.73903244,0.36502144){\makebox(0,0)[lt]{\lineheight{1.25}\smash{\begin{tabular}[t]{l}$v^i_{i-1} = v^{i+1}_{i+2}$\end{tabular}}}}%
    \put(0.50390829,0.08698957){\color[rgb]{0.83137255,0,0}\makebox(0,0)[t]{\lineheight{1.25}\smash{\begin{tabular}[t]{c}$\widetilde{\alpha}_i$\end{tabular}}}}%
  \end{picture}%
\endgroup%

    \caption{The arc $\widetilde{\alpha_i}$ is the connected component of $\phi^{-1}(\alpha_i) $ the has ends in $v^i_{i+1}(i)$ and $v^{i+1}_{i+2}(i+1)$ for $i <m$}
    \label{fig:widetilde_alpha_i}
\end{figure}

For the particular case of $\alpha_{m}$, there are also two components of $\phi^{-1}(\alpha_m)$ intersecting $Q$. Both components are arcs covering 3:1 onto $\alpha_m$. We will denote those two arcs by $\widetilde{\alpha_{m,1}}$ and $\widetilde{\alpha_{m,2}}$; being $\widetilde{\alpha_{m,1}}$ the one connecting $v^{m+1}_{m+1}(m)$ with $v^{m+1}_{m-1}(m+1)$.

Observe that $ \cup_i  \widetilde{\alpha_i} \cup_i \overline{\alpha_i} \cup \widetilde{\alpha_{m,1}} \cup \widetilde{\alpha_{m,2}} $ has two connected components. In order to make it connected, we are going to add another arc. Let $\overline{\alpha_0}$ be the lifting of $\alpha_0 \subset D^2$ (the arc connecting $p_1$ and $p_{2m}$) at $v^1_2(1)$. Note that $\overline{\alpha_0}$ ends at $v^1_2(2m)$. After adding $\overline{\alpha_0}$ to our set of arcs, it becomes connected. There are other two connected components of $\phi^{-1}(\alpha_0)$ intersecting $Q$: one is connecting $v^{m+1}_{m+1}(1) \in Q$ with $v^{m+1}_{m+1}(2m) \not \in Q$ and the other is connecting $v^{m}_{m-1}(1) \not \in Q$ with $v^{m}_{m-1}(2m) \in Q$.  
\begin{rem}\label{rem:generators_of_braids}
 The half-twists around the set of arcs $\widetilde{\alpha_i}$'s,  $\overline{\alpha_i}$'s,  $\widetilde{\alpha_{m,1}}$  and $ \widetilde{\alpha_{m,2}}$ form a system of generators for the mapping class group of $D ^2$ with $Q$ as marked points.
\end{rem}

Now, we can have those half-twists as the lift of half-twists along arcs $\alpha_i$ (see  Corollary \ref{cor:lifting-conditions}). We have shown that $\tau^3_{\alpha_i}$ (the half-twist around $\alpha_i$) lifts. Moreover, it lifts to a single half-twist around $\widetilde{\alpha_i}$ followed with a cubic half-twist around $\overline{\alpha_i}$ ($i \neq 0, m$) plus some other twists not involving $Q$.

By just twisting around $\alpha_i$, it seems impossible to create all possible braid words on the covering disk. So, we need to get rid of that extra cubic twist. To accomplish that, we will make use of another set of arcs.

\subsection{The arcs  $\gamma_i$'s}\label{sec:arcs-gammai-definition}

The arcs that we are going to use are the arcs $\alpha_i$'s modified using some additional loops $\beta_i$'s. The exact definition of the $\beta_i$'s arcs is given below. For this definition, we require to orientate the arcs $\alpha_i$'s. Recall that  $\alpha_i$ is a straight arc that connects the branching points $P_i$ and $P_{i+1}$ (see Figure \ref{fig:arc-alpha-connecting-1-and-2} for $\alpha_1$). We give each $\alpha_i$ the orientation from $P_i$ to $P_{i+1}$.

\begin{defn}
Let $E$ be a surface with a finite set of marked points $P$. And let $\alpha$ be an embedded and oriented arc in $E$ such that the interior of $\alpha$ is in $E \backslash P$ and $\partial \alpha \subset P$. 

Now, let $\beta$ be a loop with base point $x$ in $\alpha - \partial \alpha $. We define the \emph{surgery of} $\alpha$ \emph{along} $\beta$ as the arc $\gamma$ resulting from the following construction:
  \begin{enumerate}
      \item We remove a little neighborhood of $x$ from the interior of $\alpha$ and create two oriented sub-arcs: $\alpha_1$ and $\alpha_2$, being $\alpha_1$ the arc that contains the starting point of $\alpha$.
      \item Then modify the base points of $\beta$ to begin at the end of $\alpha_1$ and to end at the beginning of $\alpha_2$.
      \item Finally, $\gamma = \alpha_1 \cdot \beta \cdot \alpha_2$.
  \end{enumerate}
\end{defn}

Now, we are going to define the loops $\beta_i$'s. Because they are loops, we can write them as a conjugate of a product of the generators $\sigma_1, \sigma_2, \dots, \sigma_{2m}$ (shown in Figure \ref{fig:disk-generators}). Because $\beta_i$ has to start in the middle of $\alpha_i$ we are going to need to move the base point. This implies that we need to conjugate $\sigma_i$ with an arc connecting the base point to the middle point of $\alpha_i$. Let $\omega_i$ be an arc connecting the base point to the middle point but without going around any branching in $P$.  

Now we define $\beta_i$ as follows:
\begin{equation}
\beta_i = \left\{ \begin{array}{cc}
      (\omega_i\Pi_{j=i+1}^{2m-1}\sigma_j)\sigma_{2m} \sigma_{2m-5} (\omega_i\Pi_{j=i+1}^{2m-1}\sigma_j)^{-1}& \mbox{ for } i= 1, \dots, m-1   \\
      (\omega_i\Pi_{j=i}^{2}\sigma_j)\sigma_{1} \sigma_{6} (\omega_i\Pi_{j=i}^{2}\sigma_j)^{-1} & \mbox{for } i =  m+1, \dots, 2m
\end{array} \right.    
\end{equation}

We can now define $\gamma_i$ as the surgery of $\alpha_i$ along $\beta_i$. A picture of $\beta_i$ is given in Figure \ref{fig:gamma_i}. Observe that we haven't defined $\beta_m$, hence there is no $\gamma_m$ defined. 

\begin{figure}
    \centering
\begingroup%
  \makeatletter%
  \providecommand\color[2][]{%
    \errmessage{(Inkscape) Color is used for the text in Inkscape, but the package 'color.sty' is not loaded}%
    \renewcommand\color[2][]{}%
  }%
  \providecommand\transparent[1]{%
    \errmessage{(Inkscape) Transparency is used (non-zero) for the text in Inkscape, but the package 'transparent.sty' is not loaded}%
    \renewcommand\transparent[1]{}%
  }%
  \providecommand\rotatebox[2]{#2}%
  \newcommand*\fsize{\dimexpr\f@size pt\relax}%
  \newcommand*\lineheight[1]{\fontsize{\fsize}{#1\fsize}\selectfont}%
  \ifx\svgwidth\undefined%
    \setlength{\unitlength}{309.26323358bp}%
    \ifx\svgscale\undefined%
      \relax%
    \else%
      \setlength{\unitlength}{\unitlength * \real{\svgscale}}%
    \fi%
  \else%
    \setlength{\unitlength}{\svgwidth}%
  \fi%
  \global\let\svgwidth\undefined%
  \global\let\svgscale\undefined%
  \makeatother%
  \begin{picture}(1,0.54275785)%
    \lineheight{1}%
    \setlength\tabcolsep{0pt}%
    \put(0,0){\includegraphics[width=\unitlength,page=1]{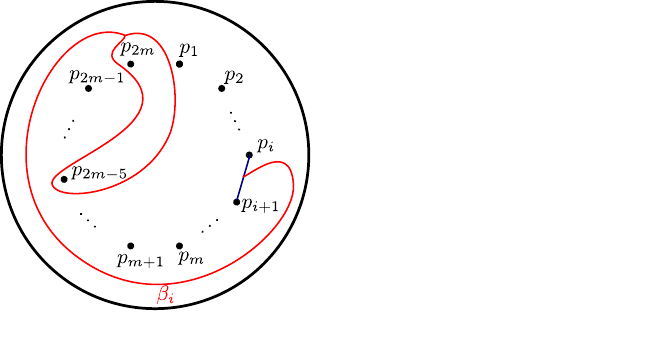}}%
    \put(0.24072404,0.00302541){\color[rgb]{0,0,0}\makebox(0,0)[t]{\lineheight{1.25}\smash{\begin{tabular}[t]{c}$i<m$\end{tabular}}}}%
    \put(0,0){\includegraphics[width=\unitlength,page=2]{gamma_i.pdf}}%
    \put(0.75928612,0.00302541){\color[rgb]{0,0,0}\makebox(0,0)[t]{\lineheight{1.25}\smash{\begin{tabular}[t]{c}$i>m$\end{tabular}}}}%
  \end{picture}%
\endgroup%

    \caption{The definition of the arc $\beta_i$ changes depending on $i>m$ or if $i<m$. The arc $\gamma_i$ is the surgery of $\alpha_i$ along $\beta_i$ for $i\neq m$.}
    \label{fig:gamma_i}
\end{figure}

\begin{prop}\label{prop:gamma_i-properties}
Let $\phi: D^2 \to D^2$ be the branch covering represented by $G_m$. And let $\gamma_i$ be the arc on the base disk defined as above for some $i$. Then we have the following:

\begin{enumerate}
    \item The square of the half-twist around $\gamma_i$ lifts to a composition of arc twists on the covering disk.
    \item There are only three connected components of $\phi^{-1}(\gamma_i)$ intersecting $Q$; one has both ends on $Q$, and the other two have only one.
    \item The component of $\phi^{-1}(\gamma_i)$ having both ends on $Q$ satisfies the following:
    \begin{itemize}
        \item It covers 1-1 and onto $\gamma_i$ under $\phi$.
        \item It is homotopic to $\overline{\alpha}_i$ relative to $Q$ (ignoring the rest of\\ $\phi^{-1}(\{p_1, p_2, \dots, p_n\})$).
        \item If $i <m$, it connects $v_{m+1}^{m+1}(i)$ with $v_{m+1}^{m+1}(i+1)$.
        \item If $i >m$, it connects $v_{m}^{m-1}(i)$ with $v_{m}^{m-1}(i+1)$.
    \end{itemize}
    \item The two components of $\phi^{-1}(\gamma_i)$ with only one end on $Q$ cover 1-1 and onto $\gamma_i$ under $\phi$.
\end{enumerate}
\end{prop}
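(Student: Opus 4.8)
The plan is to analyze $\phi^{-1}(\gamma_i)$ componentwise, using the decomposition $\gamma_i = \alpha_{1,i}\cdot\beta_i\cdot\alpha_{2,i}$ coming from the surgery construction, and to track how lifts of the pieces concatenate over the vertices of $VH$. First I would establish item (1): the arc $\gamma_i$ is obtained from $\alpha_i$ by surgery along the loop $\beta_i$, so a half-arc-twist $\tau_{\gamma_i}$ is conjugate (in the mapping class group of the punctured disk) to $\tau_{\alpha_i}$ by the homeomorphism realizing the surgery, but what matters for lifting is the local picture near $\gamma_i$. Following the recipe after Corollary \ref{cor:lifting-conditions} (see also Prop. \ref{prop:lifting-alpha-twist-conditions}), I would choose arcs $\gamma_p,\gamma_q$ from the two endpoints $p_i,p_{i+1}$ (resp. the relevant pair) of $\gamma_i$ to the basepoint enclosing a triangular region free of other branch points, form the associated $2$-colored graph $G_{\{p,q\}}$ from the permutations $\rho(\sigma_p),\rho(\sigma_q)$, and check that every component of $G_{\{p,q\}}$ is an isolated vertex or an $L_1$. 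Here the role of the conjugating factor in $\beta_i$ is precisely to change which generators surround the endpoints of the arc, so that the resulting two permutations commute appropriately and the components of $G_{\{p,q\}}$ are isolated vertices or single edges; then Prop. \ref{prop:lifting-alpha-twist-conditions}(2) gives that $\tau_{\gamma_i}^2$ lifts.

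Next I would do the bookkeeping for items (2)–(4). The preimage $\phi^{-1}(\gamma_i)$ is a disjoint union of arcs, each covering $\gamma_i$ with some degree; since $\gamma_i$ meets the branch set only at its two endpoints and the graph $G_m$ is $2m$-consecutive-colored (Prop. \ref{prop:properties-of-G_m}(4)), near each endpoint the local covering is controlled by an $L_{(2m',2)}$-type linear graph, so each component covers $\gamma_i$ either $1$–$1$ or $3$–$1$ (by Lemma \ref{lem:on-Lm_m-powers-lifts} / Prop. \ref{prop:lifting-of-tk-and-degree}). To identify the components meeting $Q$, I would lift $\gamma_i$ starting at each point of $Q\cap\phi^{-1}(p_i)$ and at each point of $Q\cap\phi^{-1}(p_{i+1})$, using the vertex-labeling $v(i)\in\phi^{-1}(p_i)$ set up before the proposition: a lift starting at $v(i)$ travels along the lift of $\alpha_{1,i}$, then along the lift of $\beta_i$ (which is read off from the product $\Pi_{j} g_j$ appearing in the definition of $\beta_i$ together with the permutation $\rho$), then along the lift of $\alpha_{2,i}$, and its terminal vertex is computed by applying the corresponding product of transpositions in $G_m$. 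Doing this for $i<m$ should show: the lift from $v^{m+1}_{m+1}(i)$ returns to $v^{m+1}_{m+1}(i+1)$ and covers $1$–$1$ (this is the two-ended component), while the lifts from $v^i_{i+1}(i)$ and from $v^m_{m-1}(i+1)$ (the other $Q$-points over $p_i$, $p_{i+1}$) run off to vertices not in $Q$, giving the two one-ended components, each covering $1$–$1$; the case $i>m$ is symmetric via the color-reflection $f(k)=2m+1-k$ from Prop. \ref{prop:properties-of-G_m}(3), swapping the roles of $v^{m+1}_{m+1}$ and $v^m_{m-1}$.

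Finally, for the homotopy claim in item (3) — that the two-ended component is homotopic rel $Q$ to $\overline{\alpha_i}$ — I would observe that $\gamma_i$ and $\alpha_i$ differ by the loop $\beta_i$, whose lift at $v^{m+1}_{m+1}(i)$ is a loop (not an arc) precisely because $\rho(\beta_i)$ fixes the sheet corresponding to that vertex, so the lifted surgery contributes only a null-homotopic detour and the two-ended lift of $\gamma_i$ is homotopic rel endpoints to the two-ended lift of $\alpha_i$, which by construction is $\overline{\alpha_i}$. The main obstacle I anticipate is item (1): verifying that the specific conjugating word in the definition of $\beta_i$ makes $G_{\{p,q\}}$ have only isolated-vertex or $L_1$ components requires carefully computing $\rho(\sigma_p)$ and $\rho(\sigma_q)$ from the intricate gluing pattern I1–I6 of $G_m$, and checking $m\ge 8$ is used exactly where needed; the homotopy statement in (3) and the degree count in (4), while fiddly, are then comparatively routine consequences of the same lift computation. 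I would organize the write-up so that the lift-of-$\beta_i$ computation is done once and reused for all four items.
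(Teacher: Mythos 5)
Your overall plan matches the paper's: describe $\phi^{-1}(\gamma_i)$ as the surgery of $\phi^{-1}(\alpha_i)$ along $\phi^{-1}(\beta_i)$, compute the lift of $\beta_i$ once by tracing the color sequence $i, i-1, \dots, 1, 6, 2, \dots, i$ through $G_m$, and reuse that computation for all four items. That is exactly how the paper organizes the argument (it records the lift of $\beta_i$ at each relevant vertex in three tables and then performs the surgery componentwise). However, there is a substantive error in your bookkeeping step that, as written, would make item (1) fail.

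You assert that ``since $\gamma_i$ meets the branch set only at its two endpoints and $G_m$ is $2m$-consecutive-colored, \dots each component covers $\gamma_i$ either $1$--$1$ or $3$--$1$.'' This is the degree structure of $\phi^{-1}(\alpha_i)$, not of $\phi^{-1}(\gamma_i)$: the $2m$-consecutive-coloring of $G_m$ controls the $\{i,i+1\}$-subgraph, hence $\alpha_i$, and says nothing directly about $\gamma_i$. And if $\phi^{-1}(\gamma_i)$ did have a $3{:}1$ component, then by Prop.~\ref{prop:lifting-of-tk-and-degree} the square $\tau^2_{\gamma_i}$ would \emph{not} lift, contradicting item (1). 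The whole point of the surgery along $\beta_i$ is to break each $3{:}1$ arc of $\phi^{-1}(\alpha_i)$ into pieces of degree at most two, so that the correct conclusion is $1{:}1$ or $2{:}1$. Establishing this is where the real work lies, and you defer it (``requires carefully computing $\rho(\sigma_p)$ and $\rho(\sigma_q)$'') rather than carry it out. In particular, the paper first proves (Prop.~\ref{prop:beta_i_representation}) that $\rho(\beta_i)$ is a product of $8m{+}2$ disjoint transpositions and exactly one $3$-cycle, coming from the identification I6; the $3$-cycle is the delicate case, because a priori it could glue a $3{:}1$ arc of $\phi^{-1}(\alpha_i)$ to another one and spoil the degree bound, and a separate argument (the ``asterisk'' cases in the tables) is needed to rule this out. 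Your proposal never surfaces this tricycle or the need to handle it, so the proof of item (1) is not actually there yet.

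Two smaller points. For $i<m$ the two $Q$-points over $p_{i+1}$ are $v^{i+1}_{i+2}(i+1)$ and $v^{m+1}_{m+1}(i+1)$, not $v^m_{m-1}(i+1)$; the $v^m_{m-1}$-type points only lie in $Q$ over $p_j$ with $j>m$. And for the homotopy claim in item (3), knowing that the lift of $\beta_i$ at $v^{m+1}_{m+1}$ (resp.\ $v^m_{m-1}$) is a loop is necessary but not sufficient: one also needs to check that this loop bounds a disk in $D^2\setminus Q$, i.e., that the two points it encircles (the preimages $v^m_m(1)$ and $v^m_m(6)$ in the $i>m$ case) are \emph{not} in $Q$; otherwise the detour would not be null-homotopic rel $Q$. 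The paper makes this explicit and you should too.
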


Thanks to the previous proposition, the function $\tau^2_{\gamma_i}$ lifts to the covering disk and it lifts to something homotopic (relative to $Q$) to the square of half-twist around $\overline{\alpha_i}$. So, by adding this new set of squares of half-twists, now we can generate the whole braid group of the covering disk relative to $Q$.

Before proving Proposition \ref{prop:gamma_i-properties}, we are going to see how it is possible to use this new set of arcs to construct any braid. You can find the proof of Proposition \ref{prop:gamma_i-properties} in Section \ref{sec:proof-gamma_i_properties}.

\subsection{Constructing any braid as a sub-braid}\label{sec:constructing-any-braid}
Roughly speaking, what are going to do is to construct any given closed braid $L$ as a sublink of $\varphi^{-1}(L')$ with $L'$ a  braid with fewer components than $L$, and  $\varphi: S^3 \to S^3$ is a covering branched along $L'$. With this proven, Theorem \ref{thm:main-theorem-translation-to-openbooks} will follow, because we can iterate the process until we get a knot on the  branching set. 

In order to think of $\varphi^{-1}(L')$ as a braid, we need $\varphi$ to preserve the standard open book decomposition of $S^3$. Moreover, we need $\varphi$ to be induced by a covering map $\phi:D ^2 \to D^2$ and a covering transformation pair $(f,\tilde{f})$ (see Section  \ref{sec:description-with-graphs}). 

Let $P = \{p_1, \dots, p_n\}  \subset D^2$  be the branching set of $\phi$. Then, the map $\varphi$ is branching along a link $L'$ that can be identified with the projection of $P \times I$ into the quotient $M(D^2, f) \cong S^3$.  This implies that $L'$ is a closed braid with braid word given by $f$ (decompose $f$ as a product of half-twists to get the word). Similarly, $\varphi^{-1}(L')$ is also a closed braid but with word given by $\tilde{f}$. 

To construct a sub-braid of $\varphi^{-1}(L')$ we can take a subset $Q$ of $\phi^{-1}(P)$ such that $\tilde{f}(Q) = Q$. Then the image of $Q \times I$ in the quotient space $M(D^2,\tilde{f}) \cong S^3$ is a sub-braid of $\varphi^{-1}(L')$. In this case, the braid word is now given by $\tilde{f}$ relative to $Q$; this means that on the decomposition of $\tilde{f}$ as a product of half-twists, we can get rid of twists around arcs with endpoints out of $Q$.

Thanks to the above observations, it is possible to translate our problem into the construction of a pair $(f,\tilde{f})$ such that $\tilde{f}$ is a braid word for $L$ relative to $Q$, and the braid given by $f$ with fewer components than $L$. 

We are going to construct $(f,\tilde{f})$ as a composition of covering transformation pairs of the form $(\tau^k_a, h)$, where $h$ is the lift of $\tau^k_a$ relative to $Q$. As we only care about the homotopy type of $\tilde{f}$ relative to $Q$, we will write the pairs in reduced form: ignoring all the half-twist not involving vertices of $Q$, ignoring even powers involving only one vertex of $Q$, and at some cases, we will replace the arc (where we twisted around) for another homotopic arc in $D^2-Q$ relative to $Q$. We will call a pair $(f, h)$ \emph{$Q$-reduced transformation pair} if $h$ is the reduction of $\tilde{f}$ for some covering transformation pair $(f,\tilde{f})$.

The next lemma gives us the transformation pair that we will use for our construction. To simplify the notation, we will write $a^n$ to denote the function $\tau_a^n$: the $n$-th power of the half-twist around the arc $a$.

\begin{lem}\label{lem:basic-covering-maps} Let $\phi:D^2 \to D^2$ be the branch covering represented by $G_m$ and  let $Q$ be the subset of $\phi^{-1}(P)$ defined on \ref{def:subset-Q}. The following pairs are $Q$-reduced covering transformations of $\phi$. 
\begin{enumerate}
    \item $(\alpha_i^3, \widetilde{\alpha_i} \circ \overline{\alpha_i}^3)$ for $i \in \{1, 2,\dots, 2m-1\} \backslash \{m\}$
    \item $(\alpha_m^3, \widetilde{\alpha_{m,1}} \circ \widetilde{\alpha_{m,2}})$ 
    \item $(\alpha_0^2,\overline{\alpha_0}^2)$ 
    \item $(\gamma_i^2, \overline{\alpha_i}^2)$   $i \in \{1, 2,\dots, 2m-1\} \backslash \{m\}$ (we replace the lifting of $\gamma_i$ for $\overline{\alpha_i}$ because they are homotopic in $D^2 -Q$).
\end{enumerate}
\end{lem}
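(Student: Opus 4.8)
\textbf{Proof proposal for Lemma \ref{lem:basic-covering-maps}.}

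The plan is to verify each of the four items by appealing to the lifting criteria established earlier (Corollary \ref{cor:lifting-conditions} and Proposition \ref{prop:lifting-alpha-twist-conditions}) together with the explicit combinatorics of $G_m$ and the structure of $Q$. For items (1) and (3), the arcs $\alpha_i$ ($i \neq 0, m$) and $\alpha_0$ are among the standard arcs of Figure \ref{fig:disk-generators}, so the relevant $\{i,i+1\}$-subgraph of $G_m$ can be read off directly. Since $G_m$ has the $2m$-consecutive-coloring property (Proposition \ref{prop:properties-of-G_m}(4)), each $\{i,i+1\}$-subgraph is a disjoint union of $L_2$'s and isolated vertices, so by Corollary \ref{cor:lifting-conditions}(a) the cube $\tau^3_{\alpha_i}$ lifts. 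The point is then to identify \emph{which} components of $\phi^{-1}(\alpha_i)$ meet $Q$ and how the lift acts on them: as already observed in the paragraphs preceding Remark \ref{rem:generators_of_braids}, $\phi^{-1}(\alpha_i)$ has exactly one component $\widetilde{\alpha_i}$ covering $3{:}1$ and one component $\overline{\alpha_i}$ covering $1{:}1$ that meet $Q$, and the lift of $\tau^3_{\alpha_i}$ restricted to each $L_2$-component is a single half-arc-twist on the degree-$3$ lift composed with a cube of a half-arc-twist on the degree-$1$ lift (this is the ``something extra'' computation spelled out right before Section \ref{sec:arcs-gammai-definition}, itself an instance of the proof of Lemma \ref{lem:on-Lm_m-powers-lifts}). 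Twists around components of $\phi^{-1}(\alpha_i)$ disjoint from $Q$ are discarded in passing to the $Q$-reduced pair. This yields $(\alpha_i^3, \widetilde{\alpha_i}\circ\overline{\alpha_i}^3)$. For $\alpha_0$, which connects $p_1$ and $p_{2m}$, the relevant subgraph is the $\{1, 2m\}$-subgraph (after the conjugation/triangular-region bookkeeping of Proposition \ref{prop:lifting-alpha-twist-conditions}); since colors $1$ and $2m$ are non-consecutive, $1$-edges are disjoint from $2m$-edges in $G_m$, so Corollary \ref{cor:lifting-conditions}(b) gives that $\tau^2_{\alpha_0}$ lifts, and $\overline{\alpha_0}$ — the lift at $v^1_2(1)$ ending at $v^1_2(2m)$, which covers $1{:}1$ — is the only component meeting $Q$ on which a nontrivial (square) twist survives; hence $(\alpha_0^2, \overline{\alpha_0}^2)$.

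For item (2) the arc is $\alpha_m$, connecting $p_m$ and $p_{m+1}$, whose relevant subgraph is the $\{m, m+1\}$-subgraph. By construction of $G_m$ via identifications I1--I2 one checks this subgraph is again a union of $L_2$'s and isolated vertices, so $\tau^3_{\alpha_m}$ lifts by Corollary \ref{cor:lifting-conditions}(a). Here, however, the preimage $\phi^{-1}(\alpha_m)$ has \emph{two} components meeting $Q$, both covering $3{:}1$ — namely $\widetilde{\alpha_{m,1}}$ and $\widetilde{\alpha_{m,2}}$ as defined above — and no $1{:}1$ component through $Q$; on each corresponding $L_2$-component the lift of $\tau^3_{\alpha_m}$ is a single half-arc-twist on the degree-$3$ lift composed with a cube on a degree-$1$ lift that misses $Q$ and is therefore discarded. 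This gives the $Q$-reduced pair $(\alpha_m^3, \widetilde{\alpha_{m,1}}\circ\widetilde{\alpha_{m,2}})$.

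Item (4) is where the arcs $\gamma_i$ enter, and it is the step I expect to be the main obstacle, since $\gamma_i$ is not a standard arc: one must invoke Proposition \ref{prop:gamma_i-properties}. Granting that proposition, part (1) of it says $\tau^2_{\gamma_i}$ lifts; part (3) says the unique component of $\phi^{-1}(\gamma_i)$ with both endpoints on $Q$ covers $1{:}1$, connects $v^{m+1}_{m+1}(i)$ to $v^{m+1}_{m+1}(i+1)$ (for $i<m$, and the mirror statement for $i>m$), and is homotopic rel $Q$ to $\overline{\alpha_i}$; part (4) says the two other components through $Q$ cover $1{:}1$ and each has a single endpoint on $Q$, so a square of a half-arc-twist around either of them is isotopic rel $Q$ to the identity and thus drops out of the $Q$-reduced pair. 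Hence the only surviving twist is the square of the half-arc-twist around the $1{:}1$ both-ends-on-$Q$ component, which we may replace by $\tau^2_{\overline{\alpha_i}}$ using the stated homotopy in $D^2 - Q$; this gives $(\gamma_i^2, \overline{\alpha_i}^2)$ and completes the lemma. The remaining work — the verification of Proposition \ref{prop:gamma_i-properties} via the explicit surgery description of $\gamma_i$ and the loop $\beta_i$ — is deferred to Section \ref{sec:proof-gamma_i_properties}, as the text indicates.
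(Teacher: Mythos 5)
Your proof is correct and matches the approach the paper implicitly takes: the paper states Lemma~\ref{lem:basic-covering-maps} without a separate proof, treating it as a direct compilation of what has already been established — the $2m$-consecutive-coloring property of $G_m$ combined with Corollary~\ref{cor:lifting-conditions} and Proposition~\ref{prop:lifting-alpha-twist-conditions} for items (1)–(3), the explicit identification (just before Remark~\ref{rem:generators_of_braids}) of which components of $\phi^{-1}(\alpha_i)$, $\phi^{-1}(\alpha_m)$, $\phi^{-1}(\alpha_0)$ meet $Q$ and in what degree, and Proposition~\ref{prop:gamma_i-properties} for item (4). One small attribution slip in your write-up of items (1) and (2): the lift of $\tau^3_{\alpha_i}$ restricted to a single $L_2$-component of the $\{i,i+1\}$-subgraph is only the single half-arc-twist around that component's degree-$3$ arc; the cubed half-arc-twists come from the \emph{isolated-vertex} components of that subgraph (each carrying a degree-$1$ arc), not from inside the same $L_2$-component. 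Since $\widetilde{\alpha_i}$ sits over an $L_2$-component and $\overline{\alpha_i}$ sits over an isolated vertex, your stated $Q$-reduced pairs $(\alpha_i^3, \widetilde{\alpha_i}\circ\overline{\alpha_i}^3)$ and $(\alpha_m^3, \widetilde{\alpha_{m,1}}\circ\widetilde{\alpha_{m,2}})$ are still correct.
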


 At the beginning of Section \ref{sec:constructing-branch-coverings}, we remark that we can write any homotopic class  $\tilde{f}:(D^2,Q) \to (D^2,Q)$ as a product of (\emph{i.e.} a word on) $\widetilde{\alpha_i}$'s, $\overline{\alpha_i}$'s, $\widetilde{\alpha_{m,1}}$,  $\widetilde{\alpha_{m,2}}$ and $\overline{\alpha_0}$. The exact order of these generators is as follows:
 
 $\overline{\alpha_1},  \dots ,\overline{\alpha_{m-1}}, \widetilde{\alpha_{m,1}}, \widetilde{\alpha_{m+1}}, \dots, \widetilde{\alpha_{2m-1}}, \overline{\alpha_0}, \widetilde{\alpha_{1}}, \dots, \widetilde{\alpha_{m-1}},\widetilde{\alpha_{m,2}}, \overline{\alpha_{m+1}}, \dots, \overline{\alpha_{2m-1}}$\\ (see Figure \ref{fig:Q-partition-in-three-parts} for the case $m=8$).

The transformations given by Lemma \ref{lem:basic-covering-maps} are enough to construct a ``representative'' $\tilde{f}:(D^2,Q) \to (D^2,Q)$  of any braid $B$ under positive stabilization, conjugation and braid isotopy.  This is what we are going to prove in the following section.

\subsection{Proof of Theorem \ref{thm:main-theorem-translation-to-openbooks}}\label{sec:proof-of-main-theorem-translation-to-openbooks}

Before starting with the proof, we just need to make a small clarification about the way we make our braid diagrams and the corresponding braid permutation.

\begin{rem}
All the braid pictures made on this document were done by reading the braid word from top to bottom. And the permutation that we assign to it comes from reading these pictures from top to bottom as well. This means that the permutation of a braid is given by a function that maps a point from the top to the point in the bottom that it is connected to.
\end{rem}
This way of assigning permutations is unusual because we no longer get a group morphism between the braid group and the symmetric group ($\rho: B_n \to S_n$), it satisfies $\rho(ab) = \rho(b)\rho(a)$. As we don't actually make use of this group morphisms property we don't mind losing it. What we win with this change is that we can get the permutation and the braid word from reading the picture top to bottom.

As this is a constructive proof, we are going to split it into several steps; five steps to be precise. In the first two steps, we put the given link $L$ in a manageable form. In steps 3 and 4, we actually create $L$ as the lift of some braid $L'$ in the base 3-sphere. In step 5 we perform a final move that reduces the number of components of $L'$ and changes $L$ only by some positive stabilizations.

\subsubsection*{Step 1. Stabilize $L$ to get enough strands and the right permutation form}
Now recall that $L$ is the closure of some braid $B_L$ with $n$ strands.  The braid permutation of $B_L$ can be decomposed as a disjoint product of cyclic permutations $\eta_1 \cdot \eta_2 \cdots \eta_k$ where $k>1$ (recall that $L$ is not connected). Just by taking a conjugation of $B_L$ we can make $\eta_i$ to be the permutation $(r_{i-1}+1\quad r_{i-1}+2\ \cdots\ r_i)$ for every $i$, where  $0 = r_0 < r_1 < r_2 < \dots < r_k = n $. Observe that a positive stabilization of $B_L$ will increase both the number of strands of $B_L$, and the length of one $\eta_i$ by 1. So, using positive stabilizations, we can make $\eta_1$ be a permutation of half of the total number of strands of $B_L$. Let $m \in \mathbb{N}$ denote half of the number of strands, so $\eta_1 = (1\ 2\ \dots\ m)$ and the number of strands is $2m$. We may need to add some extra positive stabilizations to make $m \geq 8$.

As an example, think that the given link $L$ is the Hopf Link $L_{Hopf}$, and it is presented as the closure of the 2-strand braid shown in Figure \ref{fig:example-hopf-stabilized} a). $L_{Hopf}$ is already in the form described above, but with $m=1$. So, we need to perform some extra positive stabilization moves to get $m=8$, the resulting braid $B_L = B_{Hopf}$ is drawn in Figure \ref{fig:example-hopf-stabilized} b). In this case, the corresponding permutation for $B_{Hopf}$ is $\eta_1 \eta_2= (1\ 2\ 3\ \dots\ 8)(9\ 10\ 11\ \dots\ 16)$.  

\begin{figure}
    \centering
    \import{images/}{example_hopf_stabilized.pdf_tex}
    \caption{The Hopf Link and a positive stabilization with $2m=16$ strands. We have marked $\sigma(1), \sigma(7), \sigma(10),$ and $\sigma(16)$}
    \label{fig:example-hopf-stabilized}
\end{figure}

So far, we have written $B_L$ as a closed braid with $2m$-strands, where the first $m$ strands belong to the same component of $L$. Now, we want to construct the braid word of $B_L$ as a product of half-twists around arcs in $(D^2,Q)$. For that, we need first to give a correspondence between the strands $1, 2, \dots 2m$ of $B_L$, and the elements of $Q$. As a correspondence, we will use the function $\sigma: \{1, \dots, 2m\} \to Q $ defined as follows:

\begin{equation}
    \sigma(i) = \left\{ \begin{array}{ll}
         v^{m+i}_{m-2+i}(m+i) & \textrm{if } i \leq  m \\
         v^{i-m}_{i-m+1}(i-m) &  \textrm{if } i >  m
    \end{array}
    \right.
\end{equation}
For the Hopf link example in Figure \ref{fig:example-hopf-stabilized} b) we have marked some of the strands with its corresponding value $\sigma(i) \in Q$ . 

Using the above correspondence, it is possible to write the word of $B_L$ as a product of half-twists around the arcs: $$\widetilde{\alpha_{m+1}}, \widetilde{\alpha_{m+2}}, \dots, \widetilde{\alpha_{2m-1}}, \overline{\alpha_0}, \widetilde{\alpha_{1}}, \widetilde{\alpha_{2}}, \dots, \widetilde{\alpha_{m-1}}.$$ We denote the corresponding braid word on $Q$ as $W_L$. In the Hopf Link example, we got that $W_L = W_{Hopf} = \overline{\alpha_0}^2 \widetilde{\alpha_{15}} \widetilde{\alpha_{14}} \cdots \widetilde{\alpha_{9}}  \widetilde{\alpha_{7}} \widetilde{\alpha_{6}} \cdots \widetilde{\alpha_{1}}$ (see Figure \ref{fig:example-hopf-stabilized})


\subsubsection*{Step 2. Factorize $B_L$ as a product of cyclic braids and Artin's generators}

Now, for each $i$, we can decompose the cyclic permutation $\eta_i$ as a product of transposition of the form $(k\ k+1)$. For each transposition, we can take one positive\footnote{The sign given to the half-twist is the same as the sign convention on the resulting crossing for the corresponding closed braid. The crossing sign convention is shown in Figure \ref{fig:crossing-signs}.} half-twist around the generator arc that connects the $k-$ and $k+1$-strands. The product of these half-twist creates a braid $C_i$ with permutation $\eta_i$ (see Figure \ref{fig:cycle_braid_Ci} for an example). Observe that the closure of $C_i$ is a positive stabilization of the trivial knot.

\begin{figure}
    \centering
\begingroup%
  \makeatletter%
  \providecommand\color[2][]{%
    \errmessage{(Inkscape) Color is used for the text in Inkscape, but the package 'color.sty' is not loaded}%
    \renewcommand\color[2][]{}%
  }%
  \providecommand\transparent[1]{%
    \errmessage{(Inkscape) Transparency is used (non-zero) for the text in Inkscape, but the package 'transparent.sty' is not loaded}%
    \renewcommand\transparent[1]{}%
  }%
  \providecommand\rotatebox[2]{#2}%
  \newcommand*\fsize{\dimexpr\f@size pt\relax}%
  \newcommand*\lineheight[1]{\fontsize{\fsize}{#1\fsize}\selectfont}%
  \ifx\svgwidth\undefined%
    \setlength{\unitlength}{106.10848735bp}%
    \ifx\svgscale\undefined%
      \relax%
    \else%
      \setlength{\unitlength}{\unitlength * \real{\svgscale}}%
    \fi%
  \else%
    \setlength{\unitlength}{\svgwidth}%
  \fi%
  \global\let\svgwidth\undefined%
  \global\let\svgscale\undefined%
  \makeatother%
  \begin{picture}(1,0.49340288)%
    \lineheight{1}%
    \setlength\tabcolsep{0pt}%
    \put(0,0){\includegraphics[width=\unitlength,page=1]{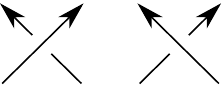}}%
    \put(0.1773805,0.040299){\color[rgb]{0,0,0}\makebox(0,0)[t]{\lineheight{1.25}\smash{\begin{tabular}[t]{c}$+$\end{tabular}}}}%
    \put(0.81641037,0.02244898){\color[rgb]{0,0,0}\makebox(0,0)[t]{\lineheight{1.25}\smash{\begin{tabular}[t]{c}$-$\end{tabular}}}}%
  \end{picture}%
\endgroup%

    \caption{Crossing sign convention}
    \label{fig:crossing-signs}
\end{figure}

\begin{figure}
    \centering
    \import{images/}{cycle_braid_Bi.pdf_tex}
    \caption{Braid $C_1$ corresponding to the permutation $\eta_1=(1\ 2\ \dots\ 8)$}
    \label{fig:cycle_braid_Ci}
\end{figure}

Now the braid $V = (C_1 \cdot C_2 \cdots C_k)^{-1}  \cdot W_L$ has a trivial permutation, so it is a pure braid. It is well known that any pure braid can be written as a product of $A_{i,j}$ generators (See \cite{RolfsenTutorial} or \cite{ArtinBraids}). Where an $A_{i,j}$ generator is a braid that takes two strands and wraps them around each other while passing over all the strands in between (See Figure \ref{fig:artin_generators}). 

\begin{figure}
    \centering
\begingroup%
  \makeatletter%
  \providecommand\color[2][]{%
    \errmessage{(Inkscape) Color is used for the text in Inkscape, but the package 'color.sty' is not loaded}%
    \renewcommand\color[2][]{}%
  }%
  \providecommand\transparent[1]{%
    \errmessage{(Inkscape) Transparency is used (non-zero) for the text in Inkscape, but the package 'transparent.sty' is not loaded}%
    \renewcommand\transparent[1]{}%
  }%
  \providecommand\rotatebox[2]{#2}%
  \newcommand*\fsize{\dimexpr\f@size pt\relax}%
  \newcommand*\lineheight[1]{\fontsize{\fsize}{#1\fsize}\selectfont}%
  \ifx\svgwidth\undefined%
    \setlength{\unitlength}{228.78398985bp}%
    \ifx\svgscale\undefined%
      \relax%
    \else%
      \setlength{\unitlength}{\unitlength * \real{\svgscale}}%
    \fi%
  \else%
    \setlength{\unitlength}{\svgwidth}%
  \fi%
  \global\let\svgwidth\undefined%
  \global\let\svgscale\undefined%
  \makeatother%
  \begin{picture}(1,0.32575954)%
    \lineheight{1}%
    \setlength\tabcolsep{0pt}%
    \put(0,0){\includegraphics[width=\unitlength,page=1]{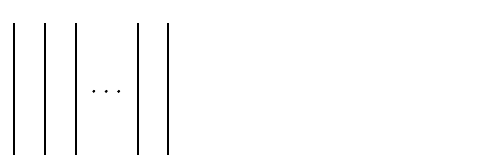}}%
    \put(0.02886121,0.29154537){\color[rgb]{0,0,0}\makebox(0,0)[t]{\lineheight{1.25}\smash{\begin{tabular}[t]{c}$i$\end{tabular}}}}%
    \put(0.35218494,0.29408956){\color[rgb]{0,0,0}\makebox(0,0)[t]{\lineheight{1.25}\smash{\begin{tabular}[t]{c}$j$\end{tabular}}}}%
    \put(0,0){\includegraphics[width=\unitlength,page=2]{artin_generators.pdf}}%
    \put(0.64783537,0.29154537){\color[rgb]{0,0,0}\makebox(0,0)[t]{\lineheight{1.25}\smash{\begin{tabular}[t]{c}$i$\end{tabular}}}}%
    \put(0.97115911,0.29408956){\color[rgb]{0,0,0}\makebox(0,0)[t]{\lineheight{1.25}\smash{\begin{tabular}[t]{c}$j$\end{tabular}}}}%
    \put(0.49534902,0.12987195){\color[rgb]{0,0,0}\makebox(0,0)[t]{\lineheight{1.25}\smash{\begin{tabular}[t]{c}$\to$\end{tabular}}}}%
  \end{picture}%
\endgroup%

    \caption{An Artin's generator $A_{i,j}$}
    \label{fig:artin_generators}
\end{figure}

The above implies that we can write $W_L$ as a product of $C_1 \cdots C_k $ followed by a product of $A_{i,j}$ generators. For example, $W_{Hopf}$ is the product $C_1 \cdot C_2 \cdot A_{9,10}^{-1} \cdot A_{1,10} \cdot A_{9,10}$  as shown in Figure \ref{fig:example_step3_artin_decomp}.  

\begin{figure}
    \centering
\begingroup%
  \makeatletter%
  \providecommand\color[2][]{%
    \errmessage{(Inkscape) Color is used for the text in Inkscape, but the package 'color.sty' is not loaded}%
    \renewcommand\color[2][]{}%
  }%
  \providecommand\transparent[1]{%
    \errmessage{(Inkscape) Transparency is used (non-zero) for the text in Inkscape, but the package 'transparent.sty' is not loaded}%
    \renewcommand\transparent[1]{}%
  }%
  \providecommand\rotatebox[2]{#2}%
  \newcommand*\fsize{\dimexpr\f@size pt\relax}%
  \newcommand*\lineheight[1]{\fontsize{\fsize}{#1\fsize}\selectfont}%
  \ifx\svgwidth\undefined%
    \setlength{\unitlength}{197.93980876bp}%
    \ifx\svgscale\undefined%
      \relax%
    \else%
      \setlength{\unitlength}{\unitlength * \real{\svgscale}}%
    \fi%
  \else%
    \setlength{\unitlength}{\svgwidth}%
  \fi%
  \global\let\svgwidth\undefined%
  \global\let\svgscale\undefined%
  \makeatother%
  \begin{picture}(1,0.72378973)%
    \lineheight{1}%
    \setlength\tabcolsep{0pt}%
    \put(0,0){\includegraphics[width=\unitlength,page=1]{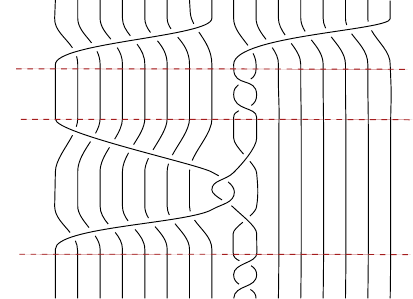}}%
    \put(0.13350034,0.64547263){\makebox(0,0)[rt]{\lineheight{1.25}\smash{\begin{tabular}[t]{r}$C_1 \cdot C_2$\end{tabular}}}}%
    \put(-0.00167743,0.50125355){\makebox(0,0)[lt]{\lineheight{1.25}\smash{\begin{tabular}[t]{l}$A_{9,10}^{-1}$\end{tabular}}}}%
    \put(-0.00167743,0.2708121){\makebox(0,0)[lt]{\lineheight{1.25}\smash{\begin{tabular}[t]{l}$A_{1,10}$\end{tabular}}}}%
    \put(-0.00167743,0.03947773){\makebox(0,0)[lt]{\lineheight{1.25}\smash{\begin{tabular}[t]{l}$A_{9,10}$\end{tabular}}}}%
  \end{picture}%
\endgroup%

    \caption{Hopf link after some positive stabilization and decomposed as a product of $C_i$'s and $A_{i,j}$'s }
    \label{fig:example_step3_artin_decomp}
\end{figure}

\subsubsection*{Step 3. Obtaining $C_i$ as a lift of some braid}

Now, we are going to prove that we can obtain $C_i$ and the generators $A_{i,j}$ as compositions of lifts from half-twists.  But first, we need to split the strands of $Q$ in three groups $Q_0$, $Q_+$ and $Q_-$:
\begin{itemize}
    \item $Q_0 = \{v^{m+i}_{m+i-2}(m+i)\}_{i=1}^m  \cup \{v^j_{j+1}(j)\}_{j=1}^m$
    \item $Q_+ = \{v_{m-1}^m(m+i)\}_{i=1}^m$
    \item $Q_- = \{v^{m+1}_{m+1}(j)\}_{j=1}^m$
\end{itemize}

Informally speaking, we can say that $Q_0$ is a subset of vertices in the "middle" of $Q$. Observe that we have chosen to place the strands of $L$ in $Q_0$. In Figure \ref{fig:Q-partition-in-three-parts} you can find a picture showing the relation between the points of $Q$ and the lifting of the arcs $\alpha_i$ for the case $m=8$.

\begin{figure}
    \centering
    \includegraphics{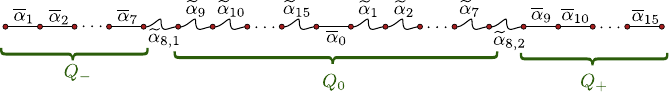}
    \caption{Partition of $Q$ into $Q_- \cup Q_0 \cup Q_+$ with the relevant lifts of $\alpha_i$ for $i=0, 1,\dots, 15$ and $m=8$. }
    \label{fig:Q-partition-in-three-parts}
\end{figure}

Let us start by creating $C_1$ on $Q_0$. For that, we take the product $\alpha^3_{m+1} \cdot \alpha^3_{m+2} \cdots \alpha^3_{2m-1}$. By Lemma \ref{lem:basic-covering-maps} part (1) the lifting relative to $Q$ is the product $\widetilde{\alpha}_{m+1} \cdot \widetilde{\alpha}_{m+2} \cdots \widetilde{\alpha}_{2m-1} \cdot \overline{\alpha}^3_{m+1} \cdot \overline{\alpha}^3_{m+2} \cdots \overline{\alpha}^3_{2m-1}$. This word is creating a copy of $C_1$ on $Q_0$ but another ``more twisted'' version of $C_1$ on $Q_+$. To remedy this, we are going to lift the following word instead:

$$\alpha^3_{m+1} \cdot \gamma^{-2}_{m+1}\cdot \alpha^3_{m+2} \cdot \gamma^{-2}_{m+2} \dots \alpha^3_{2m-1} \cdot \gamma^{-2}_{2m-1}$$

By Lemma \ref{lem:basic-covering-maps} the lifting of the previous word is:
$$( \widetilde{\alpha}_{m+1} \cdot \widetilde{\alpha}_{m+2}  \cdots \widetilde{\alpha}_{2m-1} ) \cdot ( \overline{\alpha}_{m+1} \cdot \overline{\alpha}_{m+2} \cdots \overline{\alpha}_{2m-1} )$$ So, we are obtaining two copies of $C_1$ on the covering space: one copy on $Q_0$ and the other on $Q_+$. 

In the Hopf link example, consider the first crossing from the top in Figure \ref{fig:example_step3_artin_decomp} (this corresponds to a half-twist around $\widetilde{\alpha}_{15}$). We can create this crossing as the lift of $\alpha^3_{15} \cdot \gamma^{-2}_{15}$. In the base disk, $\alpha^3_{15} \cdot \gamma^{-2}_{15}$ creates the braid shown in Figure \ref{fig:example_move_to_create_crossing}. This move lifts to  $\widetilde{\alpha}_{15} \cdot \overline{\alpha}_{15}$, creating the desired crossing plus a crossing involving two strands in $Q_+$, the ends of $\overline{\alpha}_{15}$. This way, taking the composition of $\alpha^3_{i} \cdot \gamma^{-2}_{i}$ for $i=15, 14, \dots, 7$ we can create $C_1$ in $Q_0$ and an identical copy in $Q_+$.
\begin{figure}
    \centering
    \includegraphics{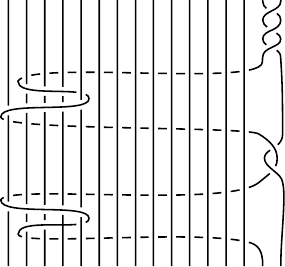}
    \caption{Braid obtained after performing the composition of half-twists given by $\alpha^3_{15} \cdot \gamma^{-2}_{15}$}
    \label{fig:example_move_to_create_crossing}
\end{figure}

Similarly, by taking lifts of $\alpha_i^3 \circ \gamma_i^{-2}$ for  $1\leq i<m$ we can create $C_2 \cdot C_3 \cdots C_k$ in the covering space and another copy in $Q_-$. On the base space, we obtain a braid with the same permutation as $C_1 \cdot C_2 \cdot C_3 \cdots C_k$, but with a lot more crossings: we get a braid like the one shown in Figure \ref{fig:example_move_to_create_crossing} per crossing of $C_i$. 

\subsubsection*{Step 4: Obtaining $A_{i,j}$ as a lift of some braid}
Now, to finally create $W_L$ in the strands of $Q_0$ we just need to create the $A_{i,j}$ generators (with $\sigma(i),\sigma(j) \in Q_0$) as the lift of some word in the base space. We are going to divide this problem into cases, but we first need to divide $Q_0$ into two parts:

\begin{itemize}
    \item $Q_0^+ = \{v^{m+i}_{m+i-2}(m+i)\}_{i=1}^m = \{\sigma(i)\}_{i=1}^m$
    \item $Q_0^- = \{v^i_{i+1}(i)\}_{i=1}^m = = \{\sigma(m+i)\}_{i=1}^m$.
\end{itemize}

\textbf{Case $\sigma(i),\sigma(j) \in Q_0^+$ or $\sigma(i),\sigma(j) \in Q_0^-$.}\\
This case implies that either $i, j \leq m$ or $i,j > m$. This means that $A_{i,j}$ involves only strings from $Q_0^+$ or $Q_0^-$, but not both. We consider first the case $i, j \leq m$.

To create $A_{i,j}$ with $i < j \leq m$ we are going to take the lift of $$D_{i,j} = \Lambda_{i,j} \cdot \alpha^6_{i+m} \cdot \gamma_{i+m}^{-6}\cdot \Lambda^{-1}_{i,j} \quad \textrm{ where } \quad \Lambda_{i,j} = \prod_{k=1}^{j-i-1} \alpha^3_{m+j-k}$$
 Here $\Lambda_{i,i+1}$ is the identity, so $D_{i,i+1} =  \alpha^6_{i+m} \cdot \gamma_{i+m}^{-6}$.

 By Lemma \ref{lem:basic-covering-maps}, $\alpha^6_{i+m}\circ \gamma^{-6}_{i+m}$ lifts to $$\widetilde{\alpha}^2_{i+m} \circ \overline{\alpha}^6_{i+m} \circ \overline{\alpha}^{-6}_{i+m}  = \widetilde{\alpha}^2_{i+m}.$$ And the lift of $\Lambda_{i,j}$ is 
 $$\prod_{k=1}^{j-i-1} \widetilde{\alpha}_{m+j-k} \circ \overline{\alpha}^3_{m+j-k} = \prod_{k=1}^{j-i-1} \widetilde{\alpha}_{m+j-k} \circ \prod_{k=1}^{j-i-1} \overline{\alpha}^3_{m+j-k}$$
 So, this means that the lift of $D_{i,j}$ is $$(\prod_{k=1}^{j-i-1} \widetilde{\alpha}_{m+j-k} )\cdot \widetilde{\alpha}^2_{i+m} (\prod_{k=1}^{j-i-1} \widetilde{\alpha}_{m+j-k} )^{-1} \cdot (\prod_{k=1}^{j-i-1} \overline{\alpha}^3_{m+j-k}) \cdot (\prod_{k=1}^{j-i-1} \overline{\alpha}^3_{m+j-k})^{-1} = A_{i,j}$$

The lift of $\alpha^6_{i+m}$ is being canceled with that of $\gamma^{-6}_{j-1}$ on $Q_+$, making space for the lift of  $\Lambda_{i,j}$ to be canceled with its own inverse on $Q_+$, and leaving the braid on $Q_+$ unaffected. This way we have created the braid generator $A_{i,j}$ on $Q$ as the lift of $D_{i,j}$.

The case $i,j >m$ is identical. We just need to modify $D_{i, j}$ as shown in the next formulas:

$$D_{i,j} =  \Lambda_{i,j} \cdot \alpha^6_{i-m} \cdot \gamma^{-6}_{i-m} \cdot \Lambda_{i,j}^{-1} \quad \textrm{where} \quad \Lambda_{i,j} = \prod_{k=1}^{j-i-1} \alpha^3_{j-m-k} .$$
Here, $j>i>m$ and if $j=i+1$ we have that $\Lambda_{i,i+1}$ is the identity, so $D_{i,i+1}= \alpha^6_{i-m} \cdot \gamma^{-6}_{i-m}$.

Back to the Hopf link example, the generator $A_{9,10}$ fits into this case. Observe that $A_{9,10}=\widetilde{\alpha}^2_1$ (check Figure \ref{fig:Q-partition-in-three-parts} ). As we proved, we need to lift $D_{9,10} = \alpha^6_{1} \gamma_{1}^{-6}$. We have drawn $D_{9,10}$ on Figure \ref{fig:example_move_to_create_artin_generator}. 

\begin{figure}
    \centering
    \includegraphics{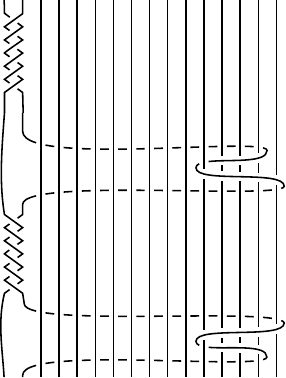}
    \caption{Move $D_{9,10}$ that lifts to $A_{9,10}$}
    \label{fig:example_move_to_create_artin_generator}
\end{figure}
  
\textbf{Case $i \in Q_0^+$ and $j \in Q^-_0$.}

The main difficulty with this case is that the two strands are separated by the generator $\overline{\alpha}_0$, which is not the lift of a cube of $\alpha_0^3$  as the other generators are. But we can go around this complication by writing $A_{i,j}$ as $\widetilde{\delta^+_i} \cdot \widetilde{\delta^-_j} \cdot  \overline{\alpha}^2_0 \cdot \widetilde{\delta^-_j}^{-1} \cdot \widetilde{\delta^+_i}^{-1}$ where

$$
\widetilde{\delta^+_i} = \left\{ 
\begin{array}{cc}
     \widetilde{\alpha}_i^{-1} \cdots \widetilde{\alpha}^{-1}_{2m-1}& \textrm{If } i \leq 2m-1 \\
      1 & \text{If } i = 2m 
\end{array}\right.
$$

$$
\widetilde{\delta^-_j} = \left\{ 
\begin{array}{cc}
     \widetilde{\alpha}_{j-1}\cdots \widetilde{\alpha}_{1}& \textrm{If } j > 1 \\
      1 & \text{If } j = 1 
\end{array}\right.
$$

We can create now $A_{i,j}$ as the lift of $\delta^+_i \cdot \delta^-_j \cdot \alpha^2_0 \cdot (\delta^-_j)^{-1} \cdot (\delta^+_i)^{-1}$ where $\delta^+_i$ and $\delta^-_j$ are the obvious choices:

$$
\delta^+_i = \left\{ 
\begin{array}{cc}
     \alpha^{-3}_i \cdots \alpha^{-3}_{2m-1}& \textrm{If } i \leq 2m-1 \\
      1 & \text{If } i = 2m 
\end{array}\right.
$$
 
$$
\delta^-_j = \left\{ 
\begin{array}{cc}
     \alpha^3_{j-1} \cdots \alpha^3_{1}& \textrm{If } j > 1 \\
      1 & \text{If } j = 1 
\end{array}\right.
$$ 
 
And it is enough to take a look at the lifts given by Lemma \ref{lem:basic-covering-maps} to see that we made the right choice. The lifting of  $\delta^+_i \cdot \delta^-_j \alpha^2_0 (\delta^-_j)^{-1} (\delta^+_i)^{-1}$ is $A_{i,j}$ relative to $Q$.

\subsubsection*{Step 5: Final move to reduce the number of components}\label{sec:step-5-final-move}
Observe now that we have created $L$ on the strands of $Q_0$, $C_1$ on $Q_+$, and $C_2 \cdots C_k$ on $Q_-$.  And on the downstairs braid, we have $S$ with the same number of components as $L$. Moreover, the braid permutation of $S$ is equal to that of $L$. 

Now, we want to reduce the number of components of $S$ without changing $L$ beyond some positive stabilizations. The move that we are going to use is  $\alpha^3_m$. So far, we haven't used $\alpha_m$ not even once, and by using it now, we are changing the permutation of $S$ by a multiplication of the transposition $(m\ m+1)$, implying the reduction of components by 1.

On the one hand, we take $\alpha_m^3$ to reduce the number of components of $S$, and on the other hand, it lifts to the composition of half-twists $\widetilde{\alpha}_{1,m}$ and $\widetilde{\alpha}_{2,m}$ relative to $Q$. But these two twists connect the braid $L$  (in $Q_0$) with $C_1$ (in $Q_+$) and $C_2$ (in $Q_-$), adding one positive crossing between them (see Figure \ref{fig:resulting-link-after-alpham}). 

As each of $C_1$ and $C_2$ is a positive stabilization of the trivial knot, we have that the newly obtained link $L'$ in $Q$ is a positive stabilization of $L$ (see Figure \ref{fig:resulting-link-after-alpham})

\begin{figure}
    \centering
\begingroup%
  \makeatletter%
  \providecommand\color[2][]{%
    \errmessage{(Inkscape) Color is used for the text in Inkscape, but the package 'color.sty' is not loaded}%
    \renewcommand\color[2][]{}%
  }%
  \providecommand\transparent[1]{%
    \errmessage{(Inkscape) Transparency is used (non-zero) for the text in Inkscape, but the package 'transparent.sty' is not loaded}%
    \renewcommand\transparent[1]{}%
  }%
  \providecommand\rotatebox[2]{#2}%
  \newcommand*\fsize{\dimexpr\f@size pt\relax}%
  \newcommand*\lineheight[1]{\fontsize{\fsize}{#1\fsize}\selectfont}%
  \ifx\svgwidth\undefined%
    \setlength{\unitlength}{223.86308805bp}%
    \ifx\svgscale\undefined%
      \relax%
    \else%
      \setlength{\unitlength}{\unitlength * \real{\svgscale}}%
    \fi%
  \else%
    \setlength{\unitlength}{\svgwidth}%
  \fi%
  \global\let\svgwidth\undefined%
  \global\let\svgscale\undefined%
  \makeatother%
  \begin{picture}(1,0.40796154)%
    \lineheight{1}%
    \setlength\tabcolsep{0pt}%
    \put(0,0){\includegraphics[width=\unitlength,page=1]{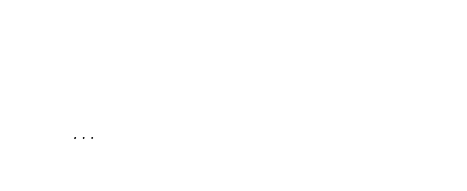}}%
    \put(0.1291808,0.01730362){\color[rgb]{0,0,0}\makebox(0,0)[t]{\lineheight{1.25}\smash{\begin{tabular}[t]{c}$C_1$\end{tabular}}}}%
    \put(0,0){\includegraphics[width=\unitlength,page=2]{resulting-link-after-alpham.pdf}}%
    \put(0.47742898,0.14463477){\color[rgb]{0,0,0}\makebox(0,0)[t]{\lineheight{1.25}\smash{\begin{tabular}[t]{c}$L$\end{tabular}}}}%
    \put(0.89040037,0.0068779){\color[rgb]{0,0,0}\makebox(0,0)[t]{\lineheight{1.25}\smash{\begin{tabular}[t]{c}$C_2$\end{tabular}}}}%
  \end{picture}%
\endgroup%

    \caption{Resulting link after adding the crossing $\widetilde{\alpha}_{1,m}$ and $\widetilde{\alpha}_{2,m}$ }
    \label{fig:resulting-link-after-alpham}
\end{figure}
This completes the construction of the link $L'$.

\subsubsection*{Final argument: iterate }
Finally, given a disconnected braid $L$, we have constructed $\phi: D^2 \to D^2$ branched along $P$ ($2m$ points for some $m \geq 8$) and a covering transformation $(f, \tilde{f})$ such that they induce a branch covering $\varphi: S^3 = M(D^2,\tilde{f}) \to S^3 = M(D^2,f)$ branched along a braid $L_1$ (the image of $P \times I$ on $M(D^2,f)$). And we also constructed a subset $Q \subset \phi^{-1}(P)$ such that its induced sub-braid $L'$ of $\varphi^{-1}(S)$ is equivalent to $L$. Moreover, the number of components of $L_1$ is one less than $L$. 

We can iterate this process, applying this algorithm on the new braid $L_1$ and obtaining $L_2$, and so on until we get $L_{k-1}$ with only one component. After composing all these branch coverings, we obtain a branch covering $\varphi: S^3 \to S^3$ branched along a knot $L_{k-1}$ with the properties that we wanted in the first place. This completes the proof of Theorem \ref{thm:main-theorem-translation-to-openbooks}.

\subsection{Proof of Proposition \ref{prop:gamma_i-properties}}\label{sec:proof-gamma_i_properties}

In this section, we will prove all the properties of $\gamma_i$-arcs claimed on Proposition \ref{prop:gamma_i-properties}. Our strategy is to describe the preimage of $\beta_i$ and later check that the properties from Proposition \ref{prop:gamma_i-properties} will be met after performing surgery on $\alpha_i$ along $\beta_i$.

More precisely, as we mentioned in Section \ref{sec:arcs-gammai-definition}, the arcs $\gamma_i$ are constructed from the $\alpha_i$ arcs  after doing surgery along the $\beta_i$ loops. This implies that  $\phi^{-1}(\gamma_i)$ is the surgery of $\phi^{-1}(\alpha_i)$ along $\phi^{-1}(\beta_i)$; where surgery means the same as before, but we now allow surgery along arcs (not just loops), which means that we have to remove neighborhoods on both ends of the arcs instead of just the base point.

We already understand the set of arcs on $\phi^{-1}(\alpha_i)$ pretty well. We can identify the arcs from $\phi^{-1}(\alpha_i)$ with the connected components of the $\{i, i+1\}$-subgraph of $G_m$. The isolated points of this graph correspond to arc components being mapped $1-1$ onto $\alpha_i$, and the other connected graphs components correspond to what we will refer to as \emph{triple arcs}; arc components of $\phi^{-1}(\alpha_i)$ that cover $3:1$ onto $\alpha_i$ under $\phi$.

Now, what's left is to understand the preimages of $\beta_i$ and how they interact with the preimages of $\alpha_i$. Remember that $\beta_i$ is not defined for $i = m$, so the next proposition is for all $i = 1, \dots, 2m-1$ except $i=m$.

\begin{prop}\label{prop:beta_i_representation}
The permutation associated with the arc $\beta_i$ is a disjoint product of $8m+2$ transpositions and a 3-cycle.
\end{prop}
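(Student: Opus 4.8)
The permutation associated with $\alpha_i$ is the image $\rho(\beta_i)$ of the loop $\beta_i$ from Section~\ref{sec:arcs-gammai-definition} under the representation $\rho: \pi_1(D^2\setminus P)\to S_N$ encoded by $G_m$, where $N$ is the number of vertices of $G_m$. By the definition of $\beta_i$, this loop is a conjugate of $g_{2m}g_{2m-5}$ when $i<m$ and of $g_1g_6$ when $i>m$; since conjugation preserves cycle type, it suffices to compute the cycle type of $\rho(g_1)\rho(g_6)$, and the case $i<m$ then follows from the case $i>m$ by the color-reflection $f(j)=2m+1-j$ of $G_m$ (Proposition~\ref{prop:properties-of-G_m}(3)), which swaps the colors $1\leftrightarrow 2m$ and $6\leftrightarrow 2m-5$. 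I will also use the bookkeeping fact that building $G_m$ identifies vertices but never edges: $G_m$ is a tree on $8m^2+4m+1$ vertices, hence has $8m^2+4m=(4m+2)\cdot 2m$ edges, which is exactly the total number contributed by the $4m+2$ copies of $L_{2m}$; so $G_m$ has precisely $4m+2$ edges of each color, and $\rho(g_1)$, $\rho(g_6)$ are each a product of $4m+2$ disjoint transpositions.

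The plan is to read the cycle type of $\rho(g_1)\rho(g_6)$ off the two-color subgraph $G'\subset G_m$ spanned by the $1$- and $6$-colored edges. As $G_m$ is a tree, each component of $G'$ is a simple path whose colors alternate between $1$ and $6$, and a path with $\ell$ edges contributes an $(\ell+1)$-cycle to $\rho(g_1)\rho(g_6)$ (one edge contributing a transposition, an isolated vertex a fixed point). So the task reduces to locating the vertices of $G_m$ incident to both a $1$-colored and a $6$-colored edge. I claim there is exactly one: the identification class $\{v_0^1,v_3^2,v_6^{4m+1}\}$, created by rule I1 with $i=1$ (which gives $v_0^1\sim v_3^2$) together with rule I6 (which gives $v_0^1\sim v_6^{4m+1}$), whose set of incident colors is $\{1,3,4,6,7\}$. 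There the transposition of $\rho(g_1)$ from the edge $[v_0^1,v_1^1]$ and the transposition of $\rho(g_6)$ from the edge $[v_5^{4m+1},v_6^{4m+1}]$ share the vertex and compose to a single $3$-cycle, while every other $1$- or $6$-colored edge is its own component of $G'$. Granting the claim, one edge of color $1$ and one of color $6$ are used up by that tricycle and the other $4m+1$ of each color survive as disjoint transpositions, so $\rho(g_1)\rho(g_6)$, hence $\rho(\beta_i)$, is a disjoint product of $8m+2$ transpositions and one tricycle.

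It remains to justify the claim, and this finite check is where the work is. A vertex of $G_m$ that is not an identification point has incident-color set of the form $\{j,j+1\}\cap\{1,\dots,2m\}$, which cannot contain both $1$ and $6$. For identification points I would go through I1--I6 one rule at a time, first pinning down each identification class precisely --- watching for chains that cross rules, such as the I1-class of $v_0^1$ absorbing $v_6^{4m+1}$ via I6 --- and then computing the union of its incident colors. Since color $1$ appears only at the slots $v_0$ or $v_1$ of a copy, and color $6$ only at $v_5$ or $v_6$, this comes down to checking that the only identification gluing a $\{v_0,v_1\}$-slot to a $\{v_5,v_6\}$-slot is $v_0^1\sim v_6^{4m+1}$; the hypothesis $m\ge 8$ is exactly what prevents the large index shifts in I2--I5 from producing a second such coincidence (for example an I4-class $\{v_i^i,v_{2m-5}^{2m+i}\}$ with $1\le i\le m$ has incident colors $\{i,i+1,2m-5,2m-4\}$, which for $m\ge 8$ contains $1$ only when $i=1$ and never contains $6$). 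The main obstacle is simply carrying out this case analysis without overlooking a chained identification; once it is done, the proposition follows.
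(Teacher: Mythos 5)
Your proposal is correct and follows the same approach as the paper: reduce by conjugation (and by the color-reflection $f(j)=2m+1-j$ for the case $i<m$) to computing the cycle type of $\rho(g_1)\rho(g_6)$, then read it off the $\{1,6\}$-subgraph of $G_m$, where the only adjacency between a $1$-colored edge and a $6$-colored edge is at the identification $v_0^1 \sim v_6^{4m+1}$ from I6. You make explicit a few details the paper leaves implicit --- the per-color edge count $4m+2$, giving $8m+4$ edges in the subgraph of which exactly two lie on the tricycle path, and the fact that the identification class of $v_0^1$ also contains $v_3^2$ via I1 (harmless here, since it contributes only colors $3$ and $4$) --- but the substance and structure of the argument are the same.
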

\begin{proof}
 Recall that $\beta_i$ is a loop on $D^2 \backslash P$, so it has an associated permutation under the representation given by $G_m$.
 
 The first observation is that $\beta_i$ is a conjugation of $\sigma_{2m} \cdot \sigma_{2m-5}$ if $i<m$ and of $\sigma_1 \cdot \sigma_6$ if $i>m$. This means that it has the same permutation structure as that of $\sigma_{2m} \cdot \sigma_{2m-5}$ or $\sigma_1 \cdot \sigma_6$. By looking at the $\{1,6\}$-subgraph of  $G_m$, we can see that $\sigma_1 \cdot \sigma_6$ is a disjoint product of $8m+2$ transpositions and a 3-cycle: the $\{1,6\}$-subgraph is formed with $8m+2$ disjoint edges except for a pair coming from the identification made on I6 (on definition of $G_m$). The same happens when $i<m$, $\beta_i$  is a product of $8m+2$ disjoint transpositions and a 3-cycle. 
 \end{proof}

The previous observation helps us to get a much better picture of the arcs involved in the preimage of $\beta_i$. As in definition \ref{def:graph-and-permuations}, we can represent $\beta_i$ as a new set of directed edges in $G_m$; in fact, the $(8m+2)$ transpositions can be represented as undirected edges and the one 3-cycle as three directed edges. We add this new set of edges to the $\{i,i+1\}$-subgraph of $G_m$, let $\hat{G}^i_m$ be the resulting graph. Observe that $\hat{G}^i_m$ only has edges with colors $i$, $i+1$, and $\beta_i$ (three of which are directed). By removing the $\beta_i$-colored arcs of $\hat{G}^i_m$, we obtain $G^i_m$ (the $\{i,i+1\}$-subgraph of $G_m$).



\begin{figure}
    \centering
    \import{images/}{surgery-to-triple-arc-along-beta-arcs.pdf_tex}
    \caption{Preimages of $\beta_i$ coming out of a triple arc.}
    \label{fig:surgery-to-triple-arc-along-beta-arcs}
\end{figure}

We now proceed to draw the edges corresponding to the permutation of $\beta_i$. For simplicity, we start with the case  $i>m$, the case $i<m$ can be constructed using the color-symmetry of $G_m$ (by $f(i) = 2m+1-i$) and the fact that $\beta_i$ is the symmetric of $\beta_{2m+1-i}$. 


To draw edges representing the $\beta_i$-permutation on $G_m$ we choose any point $v$ in the graph $G_m$, and then we move through the edges following the sequence of colors: 
\begin{equation}\label{sequence-of-colors}
i\ ,\ i-1\ ,\ \dots\ ,\ 3\ ,\ 2\ ,\ 1\ ,\ 6\ ,\ 2\ ,\ 3\ ,\ \dots\ ,\ i-1\ ,\ i
\end{equation}
(recall that $\beta_i = \sigma_i \cdot \sigma_{i-1} \cdots \sigma_2 \cdot \sigma_1 \cdot \sigma_6 \cdot \sigma^{-1}_2 \cdot \sigma^{-1}_3 \cdots \sigma^{-1}_i$). By ``moving'', we mean to move to a contiguous vertex through an edge with the color in turn, and if there is none, we stay at the current vertex (we say, it got fixed). After following the whole sequence, we will end up at a vertex $w$. If $v = w$, we say that the lifting at $v$ is a loop, and we don't add anything to the graph $G^i_m$. But if $v \neq w$, we draw a directed edge connecting $v$ with $w$, and we will tag it with $\beta_i$. We said that the lifting of $\beta_i$ on $v$ ends at $w$. We replace any pair of directed $\beta_i$-tagged edges $[w,v]$ and $[w,v]$ with a single undirected $\beta_i$-tagged edge.  

Let $H$ be a connected component of $\hat{G}^i_m$. Observe that if $H$ contains only isolated vertices of $G^i_m$, then after surgery along $\beta_i$ we will obtain arcs covering 1:1 and onto $\gamma_i$. So  Part (1) of Proposition \ref{prop:gamma_i-properties} is valid in this case. Now, the remaining case is when $H$ contains a three-vertex component of $G^i_m$.

Now, by the way that $G_m$ is constructed, each three-vertices component of $G^i_m$ (the $\{i,i+1\}$-subgraph) is contained in a unique copy $L^k_{2m}$ of $L_{2m}$. So, the  non-isolated vertices components of  $G^i_m$ are made of three vertices ($v^k_{i-1}, v^k_i$ and $v^k_{i+1}$) and two edges: the $i$-colored edge $[v^k_{i-1}, v^k_i]$  and the $i+1$-colored edge $[v^k_{i}, v^k_{i+1}]$. 

The first thing we are going to prove is that the lift of $\beta_i$ at $v^k_i$ is an isolated vertex in $G^i_k$. And later, we will work with the lift of $\beta_i$ at $v^k_{i-1}$ and $v^k_{ i+1}$; which, in most cases, is a loop or ends at an isolated vertex of $G^i_m$. So, for most cases, the connected components look like in Figure \ref{fig:surgery-to-triple-arc-along-beta-arcs}, implying that after surgery, for most cases, we have only arcs that are mapped 3:1 or 1:1 onto $\gamma_i$.  But we will discuss this in further detail later.

\subsubsection{Computing the lift at $v^k_i$}

We now compute the lifting of $\beta_i$ at vertex $v^k_i$. Starting at  $v^k_i$ we follow the sequence of colors \eqref{sequence-of-colors}. After following the first part of the sequence $i, i-1, \dots, 2, 1$, we will end up at the vertex $v^k_0 \in L^k_{2m}$; it doesn't matter if there are vertices identified in the middle of $L^k_{2m}$ the sequence will lead us to the same place because at each movement of sub-sequence we find the color in turn.  If there are no identifications on $G_m$ involving $v^k_0$, after following the rest of the sequence $6, 2, 3, \dots, i$ we remain still on $v^k_0$. Implying that the lift of $\beta_i$ at $v^k_i$ ends at $v^k_0$; an isolated vertex of $G^i_m$ as we claimed. 

The only value of $k$ where $v^k_0$ is identified with some other vertex is when $k=1$. Identifications I1 and I6 say that $v^1_0 \in L^1$ is identified with $v^2_3 \in L^2$ and $v^{4m+1}_6 \in L^{4m+1}$ (See Figure \ref{fig:identification_v0_v3_v6}). Now starting on $v^k_0$ we follow the rest of the sequence $6, 2, 3, 4, \dots, i$. It is not hard to see that after following the first five terms ($6, 2, 3, 4, and 5$), we end at the vertex $v^{4m+1}_4$, which is not part of any identification on $G_m$. So, we will stay at $v^{4m+1}_4$ the rest of the sequence: $6, 7, \dots, i$. This proves that, in the case $k=1$, the lifting of $\beta_i$ at $v^k_i$  is also an isolated vertex of $\{i, i+1\}$-subgraph as well. This completes the computation of the lift of $\beta_i$ at $v^k_i$. See Table \ref{tab:lifting-at-vk_i} for a resume of what we just computed. 

\begin{figure}
    \centering
    \includegraphics{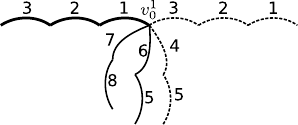}
    \caption{Identification of $v_0$ with $v_3$ and $v_6$}
    \label{fig:identification_v0_v3_v6}
\end{figure}

\begin{table}
    \centering
    \begin{tabular}{|c|c|c|}
    \hline
    When &   starting at & ends at \\
    \hline
     $k \neq 1$     & $v^k_i$& $v^k_0$ \\ 
     \hline
     $k = 1$     & $v^1_i$& $v^{4m+1}_4$\\
     \hline
    \end{tabular}
    \caption{The lift of $\beta_i$ at the vertex $v^k_i$}
    \label{tab:lifting-at-vk_i}
\end{table}

\subsubsection{Computing the lift at $v^k_{i-1}$ and $v^k_{i+1}$}

Now we proceed to compute the lift at $v^k_{i-1}$, the lift at $v^k_{i+1}$ solves similarly. So, we start by stepping on a vertex $v^k_{i-1}$, and then we follow the sequence \eqref{sequence-of-colors}. Observe that we move immediately from $v^k_{i-1}$ to $v^k_i$.  If there are no identifications on $G_m$ involving $v^k_i$ we will stay there all the rest of the sequence until we reach the color $i$ again, which will take us back to $v^k_{i-1}$. Implying, in this case, that the lifting of $\beta_i$ at $v^k_{i-1}$ is a loop.

So, the difficulty is when $v^k_i$ is part of one of the identifications on the construction of $G_m$. The only possible cases where $v^k_i$ with $2m > i >m$ is identified with some other vertex are listed below:
\begin{enumerate}
    \item On I2 $v^{i+2}_i$ is identified with $v^{i+3}_{i+3}$ with $ m +1 \leq i \leq 2m-3$,
    \item On I2 $v^i_i$ is identified with $v^{i-1}_{i-3}$ when $m + 2 \leq i \leq 2m-1$ or
    \item On I3 $v^{2m}_{2m-2}$ is identified with $v^{1}_{2}$, so this is only for $i=2m-2$.
    \item On I5 $v^{i+1}_{i}$ is identified with $v^{2m+i+1}_{5}$ with $ m +1 \leq i \leq 2m-1$
    \item On I6 $v^{4m+2}_{2m-6}$ is identified with $v^{2m}_{2m}$, this is only for $i=2m-6$.
    \item On I4 $v^{2m+j}_{2m-5}$ is identified with $v^j_j$, this only applies for $i=2m-5$ and $ m \geq j \geq 1$.
\end{enumerate}

We are going to work on each case separately.

\textbf{Case: $k=i+2$ and $i \leq 2m-3$}. In this case, $v^{i+2}_i$ is identified with $v^{i+3}_{i+3}$ using I2 (change $i$ with $i+2$).
Observe that by following the sequence \eqref{sequence-of-colors} starting at $v^{i+2}_{i-1}$ we will move first to $w = v^{i+2}_i = v^{i+3}_{i+3}$ which is either a vertex with 4 edges of colors $i$, $i+1$, $i+3$, and $i+4$ if $i\leq 2m-4$ or a valence 5 vertex for $i=2m-3$. In the former case, none of the colors around  $w$ appears on the sequence \eqref{sequence-of-colors}, meaning that we will stay on $w$ the whole sequence, but at the end (on color $i$), we move back to $v^{i+2}_{i-1}$. So, the lifting of $\beta_i$, in this case, is a loop.

For the case when $i=2m-3$, we observe that on $v^{i+3}_{i+3} = v^{2m}_{2m}$ there is also another identification by Rule 6, so we have now the identification with vertex $v^{4m+2}_{2m-6}$ implying that the colors around $w = v^{2m-1}_{2m-3} = v^{2m}_{2m} = v^{4m+2}_{2m-6} $ are $2m-3, 2m-2, 2m, 2m-6$, and $2m-5$. Implying that when following the sequence  \eqref{sequence-of-colors} (with $i=2m-3$) at $v^{2m-3}_{2m-4}$ we will first move to $w$ and then to $v^{4m+2}_{2m-5}$ and we will stay there during the sub-sequence $2m-6, 2m-7, \dots 2, 1, 6, 2, 3, \dots 2m-6$ and the final part of the sequence $2m-5, 2m-4, 2m-3$ we move back to $v^{2m-3}_{2m-4}$ (See Figure \ref{fig:case-vertex_2m-4_L2m-1-beta2m-3}). So, it is also a loop in this case.

\begin{figure}
    \centering
    \includegraphics{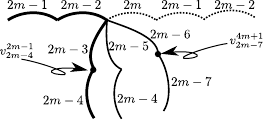}
    \caption{Local view of vertex $v^{2m-1}_{2m-3}$ on $G_m$}
    \label{fig:case-vertex_2m-4_L2m-1-beta2m-3}
\end{figure}

\textbf{Case: $k=i$ and $i \geq m+2$}. In this case, $v^i_i$ is identified with $v^{i-1}_{i-3}$ in I2 (change $i$ with $i-1$). A neighborhood of $v^i_i$ is shown in Figure \ref{fig:neighborhood-of-v_i-in-L_i}. Again, we just start at $v^k_{i-1}= v^i_{i-1}$ and follow the sequence \eqref{sequence-of-colors}, and it is not hard to convince ourselves that we will get the following path: 
$$v^i_{i-1} \to v^i_i =_{\small{I2}} v^{i-1}_{i-3} \to v^{i-1}_{i-2}=_{\small{I5}}v^{2m+i-1}_5 \to v^{2m+i-1}_6 \to v^{2m+i-1}_5 \to v^{2m+i-1}_4$$

This means that the lift of $\beta_i$ ends at an isolated vertex in the $\{i,i+1\}$-subgraph.

\begin{figure}
    \centering
    \includegraphics{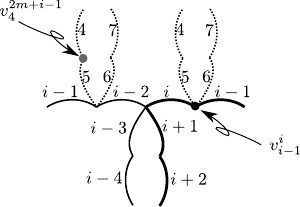}
    \caption{Neighborhood of $v^i_i$ for $i>m$}
    \label{fig:neighborhood-of-v_i-in-L_i}
\end{figure}

\textbf{Case: $i=2m-2$ and $k=2m$}. In this case, $v^{2m}_{2m-2}$ is identified with $v^{1}_{2}$ using I3. Again we will start at $v^k_{i-1} = v^{2m}_{2m-3}$ and follow the main sequence \eqref{sequence-of-colors} (with $i=2m-2$). For that, we observe that at the vertex $v^{2m}_{2m-2}$, the graph $G_m$ looks like in Figure \ref{fig:neighborhood-of-v_2m-2-in-L_2m}. We can easily check that the resulting sequence of vertices is $$v^{2m}_{2m-3} \to v^{2m}_{2m-2}= v^{1}_{2}  \to v^1_{3} \to v^{1}_{2} = v^{2m}_{2m-2} \to v^{2m}_{2m-3}$$

This means that the lifting of $\beta_i$ (for $i=2m-1$) at $v^{2m}_{2m-2}$ is a loop.

\begin{figure}
    \centering
    \includegraphics{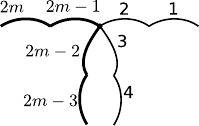}
    \caption{Neighborhood of $v^{2m}_{2m-2}$.}
    \label{fig:neighborhood-of-v_2m-2-in-L_2m}
\end{figure}

\textbf{Case: $k=i+1$ and $i \leq 2m-1$}. In this case, $v^{i+1}_{i}$ is identified with $v^{2m+i+1}_{5}$ by  I5 (change $i$ with $i+1$).
Now in Figure \ref{fig:neighborhood-of-v_i-in-L_i+1}, we have drawn a neighborhood of $v^{i+1}_{i}$. Now, starting at vertex $v^{i+1}_{i-1}$ we follow the sequence of colors \eqref{sequence-of-colors}, and we get the path:
$$v^{i+1}_{i-1} \to v^{i+1}_{i}= v^{2m+i+1}_{5}  \to v^{2m+i+1}_{6} \to v^{2m+i+1}_{5} \to  v^{2m+i+1}_{4}.$$
We will end up at $v^{2m+i+1}_{4}$. So, the lift of $\beta_i$ at $v^{i+1}_{i-1}$ ends at the isolated vertex $v^{2m+i+1}_{4}$.
\begin{figure}
    \centering
    \includegraphics{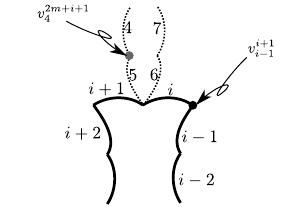}
    \caption{Neighborhood of $v^{i+1}_{i}$.}
    \label{fig:neighborhood-of-v_i-in-L_i+1}
\end{figure}

\textbf{Case: $i=2m-6$ and $k=4m+2$}. In this case, $v^{4m+2}_{2m-6}$ is identified with $v^{2m}_{2m}$ using I6. But recall that at that vertex, we also have the identification given by I2: $v^{2m}_{2m}$ is identified with $v^{2m-1}_{2m-3}$. So, locally we are in the same situation as in Figure \ref{fig:case-vertex_2m-4_L2m-1-beta2m-3} but now we are starting at $v^{4m+2}_{2m-7}$ (recall that $m \geq 8$, so $i-1=2m-7 > m$). Observe that the colors around $w = v^{4m+2}_{2m-6} = v^{2m-1}_{2m-3}=v^{2m}_{2m}$ are all bigger than $2m-6$. So, when we follow the sequence of colors \eqref{sequence-of-colors} (for $i=2m-6$), we are going to move first to $w$ and stay there the rest of the sequence, except that, by the end, we move back to $v^{4m+2}_{2m-7}$. Implying that the lift of $\beta_i$ at $v^{4m+2}_{2m-7}$ is a loop.

\textbf{Case: $k=2m+j$, $i = 2m-5$ and $j =1, \dots, m$}. In this case, $v^k_i$ is identified with $v^j_j$ using I4. We have many sub-cases, but most of them look like in figure \ref{fig:neighborhood-of-v_j-in-L_j}. If we follow the first part of the color sequence \eqref{sequence-of-colors} (with $i=2m-5$) starting at $v^k_{i-1} = v^{2m+j}_{2m-6}$ we will move all the way to vertex $v^{j-1}_{j-1}$. If neither $j$ or $j-1$ is equal to $1$ or $6$, then it will loop back to the vertex $v^k_{i-1}$ after following the rest of the sequence. The cases where it fails to come back are when $j= 2, 6, 7$. In these cases, the path ends at $v^1_{2m-5}$, $v^4_2$ and $v^6_4$, respectively. These vertices are isolated in $G^i_m$ except for $v^1_{2m-5}$, which is the middle point of a triple arc (recall that $i=2m-5$).

\begin{figure}
    \centering
    \includegraphics{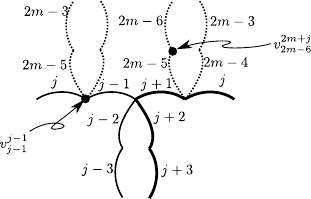}
    \caption{Neighborhood of $v^{2m+j}_{2m-5}$.}
    \label{fig:neighborhood-of-v_j-in-L_j}
\end{figure}

\bgroup
\def\arraystretch{1.5}%
\begin{table}
    \centering
    \begin{tabular}{| m{14em} | c | c |}
    \hline
    When &   starting at $v^k_{i-1}$& ends at \\
    \hline 
     $k=i+2$ and $ m< i < 2m-2$   & $v^{i+2}_{i-1}$ & $v^{i+2}_{i-1}$ \\ 
    \hline
     $k=i$ and $m+1 < i < 2m$   & $v^{i}_{i-1}$ & $v^{2m+i-1}_4$ \\ 
    \hline
     $k=2m$ and $i=2m-1$   & $v^{2m}_{2m-2}$ & $v^{2m}_{2m-2}$ \\ 
    \hline
     $k=i+1$ and $m<i<2m$   & $v^{i+1}_{i-1}$ & $v^{2m+i+1}_{4}$ \\ 
    \hline
     $k=4m+2$ and $i=2m-6$   & $v^{4m+2}_{2m-7}$ & $v^{4m+2}_{2m-7}$ \\ 
    \hline
     $k=2m+j$, $i=2m-5$ and $j \neq 2, 6, 7$ with $j \leq m$ & $v^{2m+j}_{2m-6}$ & $v^{2m+j}_{2m-6}$ \\ 
    \hline
     $k=2m+2$, $i=2m-5$ & $v^{2m+2}_{2m-6}$ & $v^1_{2m-5}$* \\ 
    \hline
     $k=2m+6$, $i=2m-5$ & $v^{2m+6}_{2m-6}$ & $v^4_2$ \\ 
    \hline
     $k=2m+7$, $i=2m-5$ & $v^{2m+7}_{2m-6}$ & $v^6_4$ \\ 
    \hline
     Other cases ($v^k_i$ is not identified with anyone)  & $v^{k}_{i-1}$ & $v^{k}_{i-1}$ \\ 
    \hline
    \end{tabular}
    \caption{The lifting of $\beta_i$ at the vertex $v^{k}_{i-1}$}
    \label{tab:lifting-at-vk_i-1}
\end{table}
\egroup

We have covered all the cases for the lift of $ \beta_i$ at $v^k_{i-1}$. Table \ref{tab:lifting-at-vk_i-1} shows a resume for all our computations. Similarly, we can compute the lift of $\beta_i$ at the vertex $v^k_{i+1}$; the result is shown in Table \ref{tab:lifting-at-vk_i+1}.

\bgroup
\def\arraystretch{1.5}%
\begin{table}
    \centering
    \begin{tabular}{|m{14em}|c|c|}
    \hline
    When &   starting at $v^k_{i+1}$ & ends at \\
    \hline
     $k=i+3$ and $ m< i < 2m-4$   & $v^{i+3}_{i+1}$ & $v^{i+3}_{i+1}$ \\ 
    \hline
     $k=i+3$ and $ i= 2m-4$   & $v^{2m-1}_{2m-3} (=v^{2m}_{2m} = v^{4m+2}_{2m-5}) $ & $v^{2m-1}_{2m-3}$ \\
    \hline
     $k=i+3$ and $ i= 2m-3$   & $v^{2m}_{2m-2} (=v^{1}_{2}) $ & $v^{2m}_{2m-2}$ \\
    \hline
     $k=i+1$ and$ m< i < 2m-1$   & $v^{i+1}_{i+1}$ & $v^{2m+i}_4$ \\ 
    \hline
     $k=i+1$ and $i=2m-1$   & $v^{2m}_{2m} (=v^{2m-1}_{2m-3}= v^{4m+2}_{2m-5})$ & $v^{4m-1}_4$ \\ 
    \hline
     $k=4m+2$ and $i=2m-7$   & $v^{4m+2}_{2m-6}$ & $v^{4m+2}_{2m-6}$ \\ 
    \hline    
     $k=2m+j$ and $i=2m-6$ with $j \neq 2,6,7$ and $j<m$  & $v^{2m+j}_{2m-5} (=v^{j}_{j})$ & $v^{4m-1}_4$ \\ 
    \hline
     $k=2m+2$ and $i=2m-6$  & $v^{2m+2}_{2m-5} (=v^{2}_{2})$ & $v^{1}_{2m-6}$* \\ 
    \hline
     $k=2m+6$ and $i=2m-6$  & $v^{2m+6}_{2m-5} (=v^{2}_{2})$ & $v^{5}_{2}$ \\ 
    \hline
     $k=2m+7$ and $i=2m-6$  & $v^{2m+7}_{2m-5} (=v^{2}_{2})$ & $v^{7}_{4}$ \\ 
    \hline
     $v^k_{i+1}$ is not identified   & $v^{k}_{i+1}$ & $v^{k}_{i+1}$ \\ 
    \hline
    \end{tabular}
    \caption{The lifting of $\beta_i$ at the vertex $v^{k}_{i+1}$ }
    \label{tab:lifting-at-vk_i+1}
\end{table}
\egroup


\subsubsection{Proving part 1 of Proposition \ref{prop:gamma_i-properties}}
Now, from Tables \ref{tab:lifting-at-vk_i}, \ref{tab:lifting-at-vk_i-1}, and \ref{tab:lifting-at-vk_i+1}, we can easily prove part 1 of Proposition \ref{prop:gamma_i-properties}. We need to prove that the square half-twist around $\gamma_i$ lifts to the covering disk. By Theorem  \ref{prop:lifting-alpha-twist-conditions}, it will be enough to prove that the connected components of the preimages of $\gamma_i$ cover either 2:1 or 1:1 onto $\gamma_i$. Those preimages are obtained by performing surgery on the preimages of $\alpha_i$ along the preimages of $\beta_i$.

Recall that the preimages of $\alpha_i$ are single arcs (1:1 onto it) and triple arcs (3:1 and onto $\alpha_i$). We also know that the preimages of $\beta_i$ are all 2:1 (represent transposition) and just one triple arc (represents a 3-cycle). 

Let us start with the case when only single arcs of $\alpha_i$ are connected with preimages of $\beta_i$. After surgery, we obtain only single arcs covering $\gamma_i$ (see Figure \ref{fig:surgery-to-single-arcs-along-beta-arcs}).

\begin{figure}
    \centering
    \import{images/}{surgery-to-single-arcs-along-beta-arcs.pdf_tex}
    \caption{Surgery to single $\alpha_i$ arcs along of $\beta_i$ preimages}
    \label{fig:surgery-to-single-arcs-along-beta-arcs}
\end{figure}

Now the interesting case is when we perform surgery on triple arcs of $\alpha_i$. In  this case, we have to look at Tables \ref{tab:lifting-at-vk_i}, \ref{tab:lifting-at-vk_i-1}, and \ref{tab:lifting-at-vk_i+1}, which give us the lift of $\beta_i$ at the three possible starting points of a triple arc. 

Observe first that the lift of $\beta_i$ at $v^k_i$ is always an isolated vertex of $G^i_m$, and at $v^k_{i-1}$ or $v^k_{i+1}$ it is, in most cases, a loop or an isolated vertex of $G^i_m$. Let us assume that no 3-cycle of $\beta_i$ is involved. Then the surgery along $\beta_i$ will separate the triple arc into two double arcs (that cover 2:1 onto $\gamma_i$) and some extra single arcs. In Figure \ref{fig:surgery-to-triple-arc-along-beta-arcs}, there is depicted an example when the lift at $v^k_{i-1}$ is a loop and at $v^k_{i+1}$ is an isolated vertex. 

The remaining cases are when there is a 3-cycle or when the lift at $v^k_{i-1}$ or $v^k_{i+1}$ is another not isolated vertex. On Tables \ref{tab:lifting-at-vk_i-1} and \ref{tab:lifting-at-vk_i+1} we marked the later cases with an asterisk. 

If there is a 3-cycle connecting a triple arc with two single arcs, the conclusion is the same as before; we get only double or single arcs after surgery. Because in this case, the surgery along a 3-cycle is basically the same as that on transposition. It is similar to the phenomenon in Figure \ref{fig:surgery-to-triple-arc-along-beta-arcs} but replacing one single arc with a triple arc. 

So, the next case is when a 3-cycle $\beta_i$ connects at least two triple arcs of $\alpha_i$. First, observe that the case with three triple arcs does not occur. Because, in these cases, a lift marked with an asterisk must occur (see Tables \ref{tab:lifting-at-vk_i}, \ref{tab:lifting-at-vk_i-1}, and \ref{tab:lifting-at-vk_i+1}). But the ones marked with asterisk end at the middle of a triple arc, and the lift on middle arcs is always a single arc (an isolated vertex in $G^i_m$, see Table \ref{tab:lifting-at-vk_i} ). So, we are left with the case when a 3-cycle is connecting two triple arcs and one single arc. But moreover, this case can only come from the asterisk cases on Tables \ref{tab:lifting-at-vk_i-1} and \ref{tab:lifting-at-vk_i+1}. 

After analyzing the two cases marked with an asterisk, we can observe that the 3-cycle follows the sequence of vertices: $v^k_{i-1} = v^{2m+1}_{2m-6} \to v^{k'}_{i} = v^1_{2m-5} \to v^{4m+1}_4 $ and loops back to $v^{2m+1}_{2m-6}$. As this case is for $i = 2m-5$, it means that $v^{2m+1}_{2m-6}$ represents one side of a triple arc, $v^1_{2m-5}$ the middle of a triple arc, and $v^{4m+1}_4$ represents a single arc. Now we need to check the lifting on the other four vertices of the two triple arcs, that is, on $v^{2m+1}_{2m-5}, v^{2m+1}_{2m-4}, v^1_{2m-4} $ and $v^1_{2m-6}$. From the tables, we can see that the lift on those vertices is a loop except on $v^{2m+1}_{2m-5}$ which is an isolated vertex $v^{2m+1}_{0}$. From this configuration, we can see that after surgery we get only single or double arcs covering $\gamma_i$ (See Figure \ref{fig:surgery-to-triple-arc-with-asterisk})

\begin{figure}
    \centering
\begingroup%
  \makeatletter%
  \providecommand\color[2][]{%
    \errmessage{(Inkscape) Color is used for the text in Inkscape, but the package 'color.sty' is not loaded}%
    \renewcommand\color[2][]{}%
  }%
  \providecommand\transparent[1]{%
    \errmessage{(Inkscape) Transparency is used (non-zero) for the text in Inkscape, but the package 'transparent.sty' is not loaded}%
    \renewcommand\transparent[1]{}%
  }%
  \providecommand\rotatebox[2]{#2}%
  \newcommand*\fsize{\dimexpr\f@size pt\relax}%
  \newcommand*\lineheight[1]{\fontsize{\fsize}{#1\fsize}\selectfont}%
  \ifx\svgwidth\undefined%
    \setlength{\unitlength}{126.86480785bp}%
    \ifx\svgscale\undefined%
      \relax%
    \else%
      \setlength{\unitlength}{\unitlength * \real{\svgscale}}%
    \fi%
  \else%
    \setlength{\unitlength}{\svgwidth}%
  \fi%
  \global\let\svgwidth\undefined%
  \global\let\svgscale\undefined%
  \makeatother%
  \begin{picture}(1,0.68580242)%
    \lineheight{1}%
    \setlength\tabcolsep{0pt}%
    \put(0,0){\includegraphics[width=\unitlength,page=1]{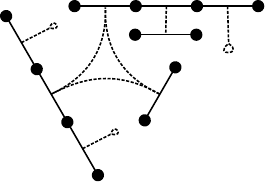}}%
    \put(0.20027428,0.61960324){\color[rgb]{0,0,0}\makebox(0,0)[rt]{\lineheight{1.25}\smash{\begin{tabular}[t]{r}$\beta$\end{tabular}}}}%
    \put(0.39917157,0.32155295){\color[rgb]{0,0,0}\makebox(0,0)[t]{\lineheight{1.25}\smash{\begin{tabular}[t]{c}$\beta$\end{tabular}}}}%
    \put(0.64052369,0.60386857){\color[rgb]{0,0,0}\makebox(0,0)[lt]{\lineheight{1.25}\smash{\begin{tabular}[t]{l}$\beta$\end{tabular}}}}%
  \end{picture}%
\endgroup%

    \caption{Connections between triple arcs involved in the asterisk case of Table \ref{tab:lifting-at-vk_i-1}.}
    \label{fig:surgery-to-triple-arc-with-asterisk}
\end{figure}

We can work out the other asterisk case similarly. And this completes all the cases, so we have completed the proof for part 1 of Proposition \ref{prop:gamma_i-properties}.

\subsubsection{Proving parts 2-4 of Proposition \ref{prop:gamma_i-properties}}

To prove the other three parts of Proposition \ref{prop:gamma_i-properties}, we need to involve $Q$, the selected subset of $\phi^{-1}(\{p_1, \dots, p_m\})$. Recall that $Q$ has two preimages of $p_i$, and both are regular points (not real branching points). Recall that the ends of $\gamma_i$ are $p_i$ and $p_{i+1}$, and we have shown that the connected components of $\phi^{-1}(\gamma_i)$ are double arcs or single arcs.

The points of $Q$ that are connected by $\phi^{-1}(\gamma_i)$ are the ones corresponding to preimages of $p_i$ and $p_{i+1}$. Those vertices are precisely $$v^{i}_{i-2}(i) = v^{i+1}_{i+1}(i),\ v^{i+1}_{i-1}(i+1),\ v^m_{m-1}(i),\ \textrm{and }\  v^m_{m-1}(i+1)$$ when $2m-1> i > m$  (when $0<i<m-1$, we use the symmetric vertices $v^i_{i+1}(i), v^{i+1}_{i+2}(i+1) = v^{i}_{i-1}(i+2), v^{m+1}_{m+1}(i)$, and $v^{m+1}_{m+1}(i+1)$). We already know that these vertices are the ends of $\widetilde{\alpha}_i$ and $\overline{\alpha}_i$, so we only need to study how they get modified by $\phi^{-1}(\beta_i)$ . 

We will show next, that the surgery along the lifts of $\beta_i$ will break $\widetilde{\alpha}_i$ into three arcs, two of which are single arcs that contain one end on $Q$ and the other end out of $Q$.  It is enough to compute the lift at the vertices $v^{i+1}_{i}, v^{i+1}_{i-1}$ and $v^{i+1}_{i+1}$ of $G_m^i$ . But we already computed this on Tables \ref{tab:lifting-at-vk_i}, \ref{tab:lifting-at-vk_i-1}, and \ref{tab:lifting-at-vk_i+1}, and it is always an isolated vertex. This implies that indeed $\beta_i$ breaks $\widetilde{\alpha}_i$ into three arcs, as we claimed. This gives us two components of $\phi^{-1}(\gamma_i)$ intersecting $Q$, which are two of the three components described in part 2 of Proposition \ref{prop:gamma_i-properties}. And these two components clearly satisfy part 4 of Proposition \ref{prop:gamma_i-properties}; each component contains only one vertex in $Q$. 

To get the other component, we have to look at the lift of $\beta_i$ on $\overline{\alpha}_i$. Recall that $\overline{\alpha}_i$ has ends $ v^m_{m-1}(i)$ and $v^m_{m-1}(i+1)$. This means that we need to compute the lift of $\beta_i$ at $v^{m}_{m-1}$. After computing the lift, we found that it is a loop, which concludes the proof of part 2. 

Now for part 3 of \ref{prop:gamma_i-properties}, observe that the sequence of colors of $\beta_i$ only travels from $v^m_{m-1}$ to $v^m_{m}$ and back to $v^m_{m-1}$. This means that the arc $\beta_i$ wraps around $v^m_{m}(1)$ and $v^m_{m}(6)$, which are points that do not belong to $Q$. Implying that $\beta_i$ bounds a disk in $D^2-Q$, so it is isotopic to its base point. This finally implies that the lift of $\gamma_i$ is also isotopic to $\overline{\alpha}_i$ relative to the boundary as stated in part 3.

\bibliographystyle{plain}
\bibliography{bibliografia}

\begin{thebibliography}{10}

\bibitem{ArtinBraids}
E.~Artin.
\newblock Theory of braids.
\newblock {\em Ann. of Math. (2)}, 48:101--126, 1947.

\bibitem{bennequin}
Daniel Bennequin.
\newblock Entrelacements et \'{e}quations de {P}faff.
\newblock In {\em Third {S}chnepfenried geometry conference, {V}ol. 1
  ({S}chnepfenried, 1982)}, volume 107 of {\em Ast\'{e}risque}, pages 87--161.
  Soc. Math. France, Paris, 1983.

\bibitem{TUL}
Roger Casals and John~B. Etnyre.
\newblock Transverse universal links.
\newblock In {\em Breadth in contemporary topology}, volume 102 of {\em Proc.
  Sympos. Pure Math.}, pages 43--55. Amer. Math. Soc., Providence, RI, 2019.

\bibitem{Casey}
M.~Casey.
\newblock {\em {Branched covers of contact manifolds}}.
\newblock PhD thesis, Georgia Institute of Technology, 2013.

\bibitem{geiges-article}
Hansj\"{o}rg Geiges.
\newblock Constructions of contact manifolds.
\newblock {\em Math. Proc. Cambridge Philos. Soc.}, 121(3):455--464, 1997.

\bibitem{giroux2003g}
Emmanuel Giroux.
\newblock G\'{e}om\'{e}trie de contact: de la dimension trois vers les
  dimensions sup\'{e}rieures.
\newblock In {\em Proceedings of the {I}nternational {C}ongress of
  {M}athematicians, {V}ol. {II} ({B}eijing, 2002)}, pages 405--414. Higher Ed.
  Press, Beijing, 2002.

\bibitem{Gonzalo-Branch-Covers}
Jes\'{u}s Gonzalo.
\newblock Branched covers and contact structures.
\newblock {\em Proc. Amer. Math. Soc.}, 101(2):347--352, 1987.

\bibitem{some_universal_knots}
Hugh~M. Hilden, Mar\'{\i}a~Teresa Lozano, and Jos\'{e}~Mar\'{\i}a Montesinos.
\newblock The {W}hitehead link, the {B}orromean rings and the knot {$9_{46}$}
  are universal.
\newblock {\em Collect. Math.}, 34(1):19--28, 1983.

\bibitem{2-universal}
Hugh~M. Hilden, Mar\'{\i}a~Teresa Lozano, and Jos\'{e}~Mar\'{\i}a
  Montesinos-Amilibia.
\newblock On 2-universal knots.
\newblock {\em Bol. Soc. Mat. Mexicana (3)}, 10(Special Issue):239--253, 2004.

\bibitem{keiko}
Tetsuya Ito and Keiko Kawamuro.
\newblock Coverings of open books.
\newblock In {\em Advances in the mathematical sciences}, volume~6 of {\em
  Assoc. Women Math. Ser.}, pages 139--154. Springer, [Cham], 2016.

\bibitem{markov-original}
A.~Markoff.
\newblock \"uber die freie \"aquivalenz der geschlossenen z\"opfe.
\newblock {\em Mat. Sbornik}, 1(1):73--78, 1936.

\bibitem{markov-contact}
S.~Yu. Orevkov and V.~V. Shevchishin.
\newblock Markov theorem for transversal links.
\newblock {\em J. Knot Theory Ramifications}, 12(7):905--913, 2003.

\bibitem{Ozabci}
Burak Ozbagci and Andr\'{a}s~I. Stipsicz.
\newblock {\em Surgery on contact 3-manifolds and {S}tein surfaces}, volume~13
  of {\em Bolyai Society Mathematical Studies}.
\newblock Springer-Verlag, Berlin; J\'{a}nos Bolyai Mathematical Society,
  Budapest, 2004.

\bibitem{branch-covering-and-contact}
Ferit \"{O}zt\"{u}rk and Klaus Niederkr\"{u}ger.
\newblock Brieskorn manifolds as contact branched covers of spheres.
\newblock {\em Period. Math. Hungar.}, 54(1):85--97, 2007.

\bibitem{RolfsenTutorial}
Dale Rolfsen.
\newblock Tutorial on the braid groups.
\newblock In {\em Braids}, volume~19 of {\em Lect. Notes Ser. Inst. Math. Sci.
  Natl. Univ. Singap.}, pages 1--30. World Sci. Publ., Hackensack, NJ, 2010.

\bibitem{Thurston}
William Thurston.
\newblock Universal links.
\newblock Paper given to Montesinos and never published, 1982.

\end{thebibliography}
\end{document}